\documentclass[12pt]{amsart}

\usepackage{amsmath, amssymb, amsthm}
\usepackage{mathtools}
\usepackage{hyperref}
\usepackage{geometry}
\usepackage{enumitem}
\usepackage{xcolor}

\hypersetup{
  colorlinks=true,
  linkcolor=blue,
  citecolor=red,
  urlcolor=blue 
}

\newtheorem{theorem}{Theorem}[section]

\newtheorem{thmintro}{Theorem}

\newtheorem{propintro}[thmintro]{Proposition}
\newtheorem{corintro}[thmintro]{Corollary}

\newtheorem{proposition}[theorem]{Proposition}
\newtheorem{corollary}[theorem]{Corollary}

\newtheorem{remark}[theorem]{Remark}
\newtheorem{example}[theorem]{Example}
\newtheorem{observation}[theorem]{Observation}

\theoremstyle{definition}
\newtheorem{definition}[theorem]{Definition}

\DeclareMathOperator{\codim}{codim}
\DeclareMathOperator{\NS}{NS}
\DeclareMathOperator{\End}{End}
\DeclareMathOperator{\Gal}{Gal}
\DeclareMathOperator{\Res}{Res}
\DeclareMathOperator{\SL}{SL}
\DeclareMathOperator{\GL}{GL}
\DeclareMathOperator{\Sp}{Sp}

\DeclareMathOperator{\MT}{MT}
\DeclareMathOperator{\Hg}{Hg}
\DeclareMathOperator{\tr}{tr}

\DeclareMathOperator{\disc}{disc}
\DeclareMathOperator{\trdeg}{trdeg}
\DeclareMathOperator{\Hom}{Hom}

\newcommand{\Z}{\mathbb{Z}}
\newcommand{\C}{\mathbb{C}}
\newcommand{\W}{\mathcal{W}}
\newcommand{\F}{\mathbb{F}}

\newcommand{\Q}{\mathbb{Q}}
\newcommand{\R}{\mathbb{R}}
\newcommand{\HH}{\mathbb{H}}
\newcommand{\OO}{\mathcal{O}}

\newcommand{\open}[1]{\medskip\noindent\textbf{{Open problem}}{#1}\medskip}

\begin{document} 

\title[McMullen's Curve, the Weil Locus, and the Hodge Conjecture]{McMullen's Curve, the Weil Locus, and the Hodge Conjecture for Abelian Sixfolds}

\author{Amir Mostaed}
\email{Floydmma2@gmail.com}

\date{\today}
\subjclass[2020]{Primary 14C30; Secondary 14K22, 14K10, 11F41, 11G15, 11J81.}
\keywords{Hodge conjecture, abelian sixfolds, Weil classes, CM abelian varieties, Hilbert modular varieties,   geodesic curves, triangle groups, Andr\'e--Oort theorem, unlikely intersections, Hecke correspondences,
modular embeddings, Faltings height.}

\begin{abstract}
McMullen's compact Kobayashi-geodesic curve $V$ in the Hilbert
modular sixfold $X_L$ meets the Weil locus $\W_K$ of abelian
sixfolds with exceptional Hodge classes in $H^{3,3}$ in a
super-atypical intersection of expected dimension $-2$. We prove
$V\cap\W_K$ is finite and every point carries a CM structure with
field $M=KL$ of degree $12$, via the Andr\'e--Oort theorem with
an independent transcendence proof. The Hodge--Weil classes at
these points are absolute Hodge but resist all known algebraicity
methods: CM isolation, absent secant geometry, and uncontrolled
discriminant obstruct every existing approach. Non-emptiness of
$V\cap\W_K$ reduces to $2816$ explicit equations in $\HH$,
each with finitely many CM solutions, constituting a finite and
decidable path toward a new case of the Hodge conjecture.
\end{abstract}

\maketitle
\tableofcontents

\section*{Introduction}

\addtocontents{toc}{\protect\setcounter{tocdepth}{0}}

% ----------------------------------------------------------
\subsection*{Hodge classes on abelian varieties: the Mumford--Tate group and Weil type}
% ----------------------------------------------------------

The Hodge conjecture predicts that every rational cohomology class of
type $(p,p)$ on a smooth complex projective variety is algebraic.
For abelian varieties the Mumford--Tate group $\MT(A)$ governs the
space of Hodge classes: by Mumford's theorem \cite{Mumford}, the Hodge
classes on all powers of $A$ are exactly the $\MT(A)$-invariants.
Deligne \cite[Thm.~2.11]{Deligne82} proved that all such classes are
\emph{absolute Hodge}, a powerful rigidity property that nevertheless
falls short of algebraicity.

The deepest open cases occur when $\MT(A)$ is small, making the
invariant space large. The theory of \emph{Weil type} \cite{Weil,MZ99}
provides the most systematic source of exceptional Hodge classes.
For an imaginary quadratic field $K=\Q(\sqrt{-d})$, a $2g$-dimensional
abelian variety $A$ is of \emph{Weil type for $K$} if $K\hookrightarrow\End^0(A)$
acts on $H^{1,0}(A)$ with eigenvalues $+i\sqrt{d}$ and $-i\sqrt{d}$
each of multiplicity $g$. This forces a distinguished $2$-dimensional
$\Q$-subspace $W_K(A)\subset H^{2g}(A,\Q)$ of \emph{Hodge--Weil classes},
exceptional in the sense of Moonen--Zarhin \cite{MZ99}: they lie outside
the algebra generated by divisor classes when $g\ge 2$.

Positive results on the algebraicity of Weil classes are scarce.
For $g=2$: Weil~\cite{Weil} treated special cases; Schoen~\cite{SchoenS1} proved algebraicity for $K=\Q(\sqrt{-3})$
and a specific polarization class (fourfolds);
Schoen~\cite{SchoenS2} extended this to arbitrary discriminant
(fourfolds) and trivial discriminant (sixfolds); Markman~\cite{M2} settled
the generic fourfold of Weil type with trivial discriminant for
all~$K$; and Markman~\cite[Cor.~1.6.1]{Markman25} subsequently
resolved the full case, proving algebraicity for every abelian
fourfold of Weil type (all~$K$, all discriminants).
For $g=3$: Markman~\cite[\S8.4, \S9.3]{Markman25} proved
algebraicity for discriminant~$-1$ and all~$K$ via a
semiregular secant sheaf; outside this locus, the Hodge
conjecture for Weil classes on sixfolds remains completely open.
For related results via the Kuga--Satake construction,
see~\cite{vG99}.

% ----------------------------------------------------------
\subsection*{McMullen's curve and the Weil locus}
% ----------------------------------------------------------

The present paper uncovers a new geometrically rigid family of abelian
sixfolds of Weil type, proves their Weil classes are absolute Hodge
yet inaccessible to all known methods, and reduces the existence
question to a finite computation. The family arises from an exceptional
object in the theory of Fuchsian groups.

Among hyperbolic triangle groups $\Delta(p,q,r)\subset\mathrm{PSL}_2(\R)$,
the arithmetic ones (commensurable with $\SL_2(\OO_F)$ for a totally
real field $F$) were classified by Takeuchi \cite{Tak77}: there are
finitely many commensurability classes, all explicit.
For such groups the invariant quaternion algebra
$B_0 := \Q[\Delta_0]$ (where $\Delta_0 = \langle g^2 : g\in\Delta\rangle$)
splits at exactly one infinite place of the invariant trace field $K_0$.

McMullen \cite{McM23} studied eleven cocompact triangle groups --
the \emph{Hilbert series} -- with the opposite extreme behavior:
$B_0$ splits at \emph{every} infinite place of $K_0$,
admitting a matrix model $\Delta_0\hookrightarrow\SL_2(\OO_{K_0})$.
The most remarkable is $\Delta(14,21,42)$. Its invariant trace field is
\[
K_0= \Q(\cos\pi/21),\quad [K_0:\Q]=6,\quad \Gal(K_0/\Q)\cong\Z/6\Z,
\]
with integers $\OO_{K_0}=\Z[t]$, $t=2\cos(\pi/21)$. We set $L:=K_0$ and write $X_L = \HH^6/\SL_2(\OO_L)$
for the associated Hilbert modular sixfold.

\begin{thmintro}[McMullen {\cite[Thm.~1.2]{McM23}}]\label{thm:A}
Every finite-index subgroup of $\Delta(14,21,42)$ admits a matrix model
over the ring of integers of its trace field.
In particular, $\Delta_0:=\Delta_0(14,21,42)$ embeds in $\SL_2(\OO_L)$
as a cocompact subgroup of infinite index
(being a finite-index subgroup of $\Delta(14,21,42)$, which itself
embeds in $\SL_2(\OO_L)$ with infinite index).
\end{thmintro}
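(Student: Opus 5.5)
The argument has two parts: first McMullen's integrality statement for $\Delta_0$ itself, and then the passage to finite-index subgroups and to the index assertion.

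\textbf{Step 1: integral matrix model for $\Delta_0$.} Realize $\Delta=\Delta(14,21,42)$ with generators $A,B$ satisfying $A^{14}=B^{21}=(AB)^{42}=1$ in $\mathrm{PSL}_2(\R)$, lifted to $\SL_2$. The traces of the lifts are $\pm 2\cos(\pi/14)$, $\pm 2\cos(\pi/21)$ and $\pm 2\cos(\pi/42)$. By Takeuchi's trace-identity formalism, the invariant trace field is generated by $\tr(A^2)$, $\tr(B^2)$, $\tr(A^2B^2)$ and products such as $\tr(A)\tr(B)\tr(AB)$. I will compute these explicitly and check that all of them lie in $\OO_L=\Z[t]$ with $t=2\cos(\pi/21)$. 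This uses that $4\cos(\pi/14)\cos(\pi/21)\cos(\pi/42)$ and the squared traces are algebraic integers in $\Q(\cos\pi/21)$. Consequently every trace of an element of $\Delta_0$ is an algebraic integer of $L$, since it is an integral polynomial in these generators.

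Next I form the order $\OO\Delta_0=\OO_L[\Delta_0]\subset B_0$. The hypothesis that $B_0$ splits at every infinite place of $L$ only gives $B_0\otimes\R\cong M_2(\R)^6$. The real content is to show that $B_0\cong M_2(L)$, i.e. that $B_0$ is also unramified at all finite places, and that $\OO\Delta_0$ lies in a maximal order conjugate to $M_2(\OO_L)$. For the splitting I will use the explicit Hilbert symbol $(\tr(A^2)^2-4,\ \tr([A,B])-2)_L$ and check local triviality. For the order, the class number of $L$ should be $1$, so every maximal order of $M_2(L)$ is conjugate to $M_2(\OO_L)$. Equivalently, I will write down explicit matrices: take $A^2$ upper triangular over $\OO_L$ and solve for $B^2$ with entries in $\OO_L$, using that the discriminant of $\OO\Delta_0$ is a unit.

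\textbf{Step 2: finite-index subgroups.} Any finite-index subgroup $\Gamma\subset\Delta$ satisfies $\Gamma\cap\Delta_0\subset\Delta_0\subset\SL_2(\OO_L)$. McMullen's statement, however, involves the trace field of $\Gamma$ itself. I will check that $\Gamma^{(2)}$ has the same invariant trace field and that $\Gamma$ is generated over $\Gamma^{(2)}$ by elements whose traces lie in $\OO$ of the trace field $\Q(\tr\Gamma)$. Conjugating by the matrix from Step 1, the $\OO_{\Q(\tr\Gamma)}$-order generated by $\Gamma$ is again contained in $M_2$ of the integers. I expect this uniformity to be the main obstacle: it requires that $\Delta$ itself, not just $\Delta_0$, has integral traces and a split algebra over its trace field. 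This is precisely McMullen's Hilbert-series property, and I would verify it on the generators and propagate it by the trace identities.

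\textbf{Step 3: cocompact, infinite index.} Cocompactness holds because a triangle group with finite $p,q,r$ has no parabolic elements. Infinite index follows by comparing dimensions. $\SL_2(\OO_L)$ is an irreducible lattice in $\SL_2(\R)^6$, whereas $\Delta_0$ is discrete in the single factor $\SL_2(\R)$ and has $\HH$ as its symmetric space. A finite-index subgroup of $\SL_2(\OO_L)$ would be Zariski-dense in $\SL_2^6$, which is not the case here. Alternatively, $\Delta_0$ is non-arithmetic, since its algebra splits at more than one infinite place (contrast Takeuchi's list), so it cannot be commensurable with $\SL_2(\OO_L)$. The same argument applies to every finite-index subgroup $\Gamma$ in the parenthetical remark.
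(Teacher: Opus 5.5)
Your Step~1 takes a different route from the paper. The paper, following McMullen, starts from the $\OO_{K_\Delta}$-model of all of $\Delta$ given by Theorem~\ref{thm:mcm1}. It lets the involution of $\Gal(K_\Delta/L)$ act entrywise and uses that the fixed subgroup of this involution is exactly $\Delta_0$. Every finite-index $\Gamma$ has $L\subseteq\Q(\tr\Gamma)\subseteq K_\Delta$ with $[K_\Delta:L]=2$, so one and the same basis serves every $\Gamma$; those with trace field $L$ lie in $\Delta_0$.

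Your direct treatment of $\Delta_0$ is a legitimate alternative: integral traces, $B_0\cong M_2(L)$ by a Hilbert-symbol check, then $h_L=1$ so that a maximal order containing $\OO_L[\Delta_0]$ is conjugate to $M_2(\OO_L)$. Your fallback of putting $A^2$ in upper-triangular form cannot work, however, since its eigenvalues $e^{\pm i\pi/7}$ are not real.

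The first genuine gap is Step~2. Conjugating by the Step~1 matrix makes $\Delta_0$ integral, but an element $a\in\Delta\setminus\Delta_0$ merely normalizes the order $\OO_L[\Delta_0]$. Unless that order is maximal, $a$ need not normalize $M_2(\OO_L)$, and it can have non-integral entries despite having integral trace. Maximality is exactly your unproved aside that the discriminant is a unit. Trace identities control traces, not matrix entries, and integral traces plus a split algebra over $K_\Delta$ do not by themselves give a model over $\OO_{K_\Delta}$.

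To close this within your framework, set $F=\Q(\tr\Gamma)\in\{L,K_\Delta\}$. Note that $F[\Gamma]\cong B_0\otimes_L F\cong M_2(F)$ and that $\OO_F[\Gamma]$ is an order. Then rerun the maximal-order argument over $F$ with a fresh conjugation. This requires the class group of $K_\Delta=\Q(\zeta_{84})^+$ to have odd order, not just $h_L=1$; it is in fact trivial, since $\Q(\zeta_{84})$ has class number one.

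Step~3 rests on a false claim. $\Delta_0$ \emph{is} Zariski dense in $\Res_{L/\Q}\SL_2$: this is McMullen's criterion, Theorem~\ref{thm:density}, which the paper uses to obtain Corollary~\ref{cor:mt}. So Zariski closure cannot distinguish $\Delta_0$ from a finite-index subgroup of $\SL_2(\OO_L)$. The appeal to non-arithmeticity is also beside the point: $\SL_2(\Z)$ is arithmetic and still has infinite index in $\SL_2(\OO_L)$.

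What works is the discreteness you mention but do not use. Since $\OO_L\supseteq\Z+\Z t$ is dense in $\R$, the group $\sigma_1(\SL_2(\OO_L))$ contains the non-discrete group of unipotents $\bigl(\begin{smallmatrix}1&x\\0&1\end{smallmatrix}\bigr)$, $x\in\OO_L$. Meanwhile $\Delta_0$ is discrete, and a discrete subgroup of a Lie group cannot have finite index in a non-discrete one.

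Finally, the parenthetical in the statement cannot be verified as written. $\Q(\tr\Delta)=K_\Delta$ has degree $12$; for example, the order-$14$ generator has trace $\pm2\cos(\pi/14)\notin L$. So $\Delta$ embeds in $\SL_2(\OO_{K_\Delta})$, not in $\SL_2(\OO_L)$, and your closing sentence should not endorse that remark. The ``in particular'' comes from applying the first sentence to $\Gamma=\Delta_0$, whose trace field is $L$. The same discreteness argument then gives infinite index of $\Delta$ in $\SL_2(\OO_{K_\Delta})$ as well.
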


The significance of Theorem~\ref{thm:A} is that it places the triangle group
$\Delta_0(14,21,42)$ inside the integer points of a split algebraic
group over a number field — a property shared by arithmetic groups
but achieved here by a non-arithmetic group. It is precisely this
split structure that enables the modular-embedding construction to
produce a holomorphic map into a Hilbert modular variety, rather than
merely into a ball quotient.

Via the Cohen--Wolfart construction \cite{CW90}, this embedding yields a
\emph{modular embedding} $\widetilde{f}_0:\HH\to\HH^6$, equivariant for
$\Delta_0$ on $\HH$ and the $(\sigma_1,\ldots,\sigma_6)$-twisted action
on $\HH^6$, where $\sigma_i$ are the six real embeddings of $L$.
For a suitable finite-index $\Gamma\subset\Delta_0$, this descends to a
holomorphic immersion
\[
f: V = \HH/\Gamma \longrightarrow X_L = \HH^6/\SL_2(\OO_L)
\]
into the Hilbert modular sixfold $X_L$.

\begin{thmintro}[McMullen {\cite[Thm.~1.9]{McM23}}]\label{thm:B}
$V\subset X_L$ is a compact Kobayashi-geodesic curve not contained
in any proper Shimura subvariety of $X_L$.
\end{thmintro}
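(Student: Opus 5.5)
The plan has three parts: compactness, the geodesic property, and the claim that $V$ lies in no proper Shimura subvariety. Compactness is immediate. $\Gamma$ is a finite-index subgroup of the cocompact group $\Delta(14,21,42)$, so $V=\HH/\Gamma$ is compact. By Theorem~\ref{thm:A}, $\Gamma$ lies in $\SL_2(\OO_L)$, so the Cohen--Wolfart map $\widetilde f_0$ descends to $f:V\to X_L$. Passing to a torsion-free $\Gamma$ of finite index makes $f$ an immersion between manifolds, or orbifold-wise an immersion.

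For the geodesic property, we write $\widetilde f_0=(f_1,\dots,f_6)$. Here $f_1=\mathrm{id}$, since $\sigma_1$ is the identity embedding that realizes $\Delta_0$ as the uniformizing Fuchsian group. Each $f_i$ is a holomorphic self-map of $\HH$ that is equivariant for the twisted action $\gamma\mapsto\sigma_i(\gamma)$. The Kobayashi metric on $\HH^6$ is the sup of the Poincaré metrics on the factors. Schwarz--Pick gives $|f_i'|_{\mathrm{hyp}}\le 1$ for every $i$, with equality for $i=1$. Hence $\widetilde f_0$ is an isometry from $(\HH,\rho)$ to $(\HH^6,k_{\HH^6})$ at the infinitesimal level, and therefore also for distances: its first coordinate is distance-preserving, and the sup-metric is bounded above by the $\HH$-distance. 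So $\widetilde f_0(\HH)$ is a totally geodesic Kobayashi disk, and $V$ is a Kobayashi-geodesic curve.

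The main obstacle is showing that $V$ lies in no proper Shimura subvariety $S\subsetneq X_L$. I would argue by contradiction. If $V\subset S$, then the Zariski closure of the monodromy group $\Gamma\subset\Res_{L/\Q}\SL_2$ would be contained in the generic Mumford--Tate group $H$ of $S$, which is a proper reductive $\Q$-subgroup. The first step is to compute this Zariski closure. Because the invariant trace field of $\Gamma$ is all of $L$, the $\Q$-algebra generated by $\Gamma$ is $B_0\cong M_2(L)$. The $\Q$-Zariski closure therefore surjects onto each factor $\SL_2$ over $\sigma_i$, since $\Gamma$ is Zariski dense in $\SL_2(\R)$ via each $\sigma_i$: it is non-elementary there. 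A Goursat/Galois argument then reduces the only possible proper subgroups to diagonal-type ones of the form $\Res_{L'/\Q}\SL_2$, coming from a subfield $L'\subsetneq L$ with a twisted diagonal. Such a subgroup would force traces of $\Gamma$ into $L'$, which contradicts trace field $=L$. Hence $\Gamma$ is Zariski dense in $\Res_{L/\Q}\SL_2$.

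It remains to see that this contradicts containment in a proper Shimura subvariety. The Hodge-generic Mumford--Tate group along $V$ must contain the algebraic monodromy group (Deligne/André). So $H$ contains $\Res_{L/\Q}\SL_2$, and $S=X_L$, a contradiction. The delicate point is the Zariski-density step, in particular ruling out twisted diagonals that mix embeddings. I would handle it using McMullen's observation that $B_0$ splits at every infinite place together with the explicit trace field $\Q(\cos\pi/21)$.
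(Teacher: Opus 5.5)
Your proposal is correct and follows the same route as the paper, which defers to McMullen. The integral matrix model (Theorem~\ref{thm:mcm2}) gives the Cohen--Wolfart embedding with $f_1=\mathrm{id}$, hence a compact Kobayashi-geodesic curve, and Zariski density of $\Gamma$ in $\Res_{L/\Q}\SL_2$ (Theorem~\ref{thm:density}, which you re-derive by a Goursat argument instead of citing) forces any Shimura subvariety containing $V$ to be all of $X_L$. One simplification: the splitting of $B_0$ plays no role in the density step, because every automorphism of $\SL_2$ is inner, so a twisted diagonal linking the embeddings in a coset of some $H\le\Gal(L/\Q)$ already forces $\tr\Gamma\subset L^H$, contradicting the fact that the trace field is $L$.
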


Theorem~\ref{thm:B}'s assertion that $V$ is not contained in any proper Shimura subvariety of $X_L$ has a precise translation in Hodge-theoretic terms for the universal abelian sixfold family $\mathcal{A} \to X_L$ restricted to $V$. For a very general point $v \in V$, the Mumford--Tate group $\MT(A_v)$ is the largest $\Q$-algebraic subgroup of $\GL(H^1(A_v,\Q))$ fixing all Hodge tensors $(H^1(A_v,\mathbb{Q}))^{\otimes p}\otimes(H^1(A_v,\mathbb{Q})^\vee)^{\otimes q}$ for all $p,q\geq 0$. 

\begin{thmintro}[Generic Mumford--Tate group]\label{thm:C}
For a Zariski-generic point $v\in V$, the abelian sixfold $A_v$ satisfies
\[
\MT(A_v)=\Res_{L/\Q}\SL_2.
\]
Thus the generic fiber carries no Hodge class beyond those from real
multiplication and polarization.
\end{thmintro}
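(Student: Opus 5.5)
The plan is to sandwich the generic Mumford--Tate group between a known upper bound and a lower bound coming from the monodromy of the family over $V$.

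\emph{Upper bound.} Every fiber $A_v$ with $v\in X_L$ carries real multiplication by $\OO_L$ compatible with the polarization. Hence $\MT(A_v)$ commutes with $L$ and preserves the polarization form up to scalar. Its derived part therefore lies in $\Res_{L/\Q}\SL_2$, the generic Hodge group of $X_L$. More precisely, we work with the Hodge group $\Hg(A_v)=\MT(A_v)\cap\SL(H^1)$, which is how the statement should be read. We obtain $\Hg(A_v)\subseteq\Res_{L/\Q}\SL_2$ for every $v\in V$.

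\emph{Lower bound via André's theorem.} By Deligne--André, for $v$ outside a countable union of proper analytic subsets of $V$ (this is our meaning of "Zariski-generic"), the connected algebraic monodromy group $G_{\mathrm{mon}}$ of the local system $R^1\pi_*\Q$ restricted to $V$ is a normal subgroup of the derived group of $\Hg(A_v)$. Here $G_{\mathrm{mon}}$ is the Zariski closure of the image of $\pi_1(V)=\Gamma$. So it suffices to show that the Zariski closure of $\Gamma$, as a $\Q$-group, is all of $\Res_{L/\Q}\SL_2$.

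\emph{Main step.} Over $\R$, $\Res_{L/\Q}\SL_2$ becomes $\prod_{i=1}^6\SL_{2,\R}$, and $\Gamma$ acts through $(\sigma_1,\dots,\sigma_6)$. We argue in three steps.
\begin{enumerate}
\item Each $\sigma_i(\Gamma)\subset\SL_2(\R)$ is non-elementary. It is Zariski dense in $\SL_2$ because $\sigma_i(\Gamma)$ is an infinite group that is not virtually solvable: it is a finite-index subgroup of a Galois conjugate of a hyperbolic triangle group, and McMullen's matrix model shows the traces $\sigma_i(\tr\gamma)$ are unbounded or at least not contained in a finite set. So the Zariski closure $H\subseteq\prod_i\SL_2$ surjects onto each factor.
\item By Goursat's lemma, $H$ is a product over the blocks of some partition of $\{1,\dots,6\}$, with factors glued diagonally by isomorphisms $\SL_2\to\SL_2$. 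Such a gluing between factors $i$ and $j$ forces $\sigma_i(\tr\gamma)=\sigma_j(\tr\gamma)$ for all $\gamma\in\Gamma$, up to sign. Since $\Gamma\subset\Delta_0$ has finite index, its trace field is $L$ itself (McMullen's Theorem~\ref{thm:A}). The six embeddings are therefore distinct on $\Q(\tr\Gamma)=L$, and there is no gluing.
\item Hence $H(\R)^\circ=\prod\SL_2$. Since $H$ is defined over $\Q$ and its real points equal those of $\Res_{L/\Q}\SL_2$, we get $H=\Res_{L/\Q}\SL_2$.
\end{enumerate}

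\emph{Consistency check and main obstacle.} The result is consistent with Theorem~\ref{thm:B}: a strictly smaller generic Mumford--Tate group would place $V$ inside a proper special subvariety. Indeed, the generic Mumford--Tate group cuts out the smallest special subvariety containing $V$. So one can alternatively derive the result directly from Theorem~\ref{thm:B}, and I would present this as the primary argument, with the monodromy computation as a verification. The obstacle I expect is the sign and scalar ambiguity in the Goursat step, which involves $\mathrm{PSL}_2$ versus $\SL_2$ and the passage to squares. It is handled by working with $\Delta_0$, which is generated by squares, so the traces are genuine invariants. A second delicate point is justifying that "Zariski-generic" may be taken to mean "outside countably many points", because $V\cap\W_K$ is only shown later to be finite.
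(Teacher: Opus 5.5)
Your proposal is correct and has the same skeleton as the paper's proof of Corollary~\ref{cor:mt}. The $L$-linear polarization bounds $\Hg(A_v)$ above by $\Res_{L/\Q}\SL_2$, and Zariski density of the monodromy $\Gamma$ bounds it below. The differences lie in the two inputs to the lower bound, and in both places your version is the sounder one.

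\textbf{Density.} The paper cites McMullen's criterion (Theorem~\ref{thm:density}); you prove density directly via Goursat and the trace field, which makes the argument self-contained. Two points need tightening.
\begin{itemize}
\item In step (1), the clean reason is that $\sigma_i\colon\Gamma\to\sigma_i(\Gamma)$ is an abstract isomorphism. So $\sigma_i(\Gamma)$ is not virtually solvable, hence Zariski dense in $\SL_2$; the remark about unbounded traces is unnecessary.
\item In step (2), if the closure $H$ is disconnected, a gluing of factors $i\neq j$ only gives $\sigma_i(\tr\gamma)=\pm\sigma_j(\tr\gamma)$. Pass to $\tr^2$ and use that $\Q(\tr^2\gamma:\gamma\in\Gamma)$ is the invariant trace field $k\Gamma=L$, a commensurability invariant. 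Alternatively, restrict to $\Gamma\cap H^\circ$, where the gluing is an inner automorphism of the simply connected $\SL_2$ and traces agree exactly. The fact that $\Delta_0$ is generated by squares is not what does this.
\end{itemize}

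\textbf{From monodromy to the Hodge group.} You use Andr\'e's theorem: the connected monodromy group is normal in $\Hg^{\mathrm{der}}$ at Hodge-generic points. The paper instead asserts that monodromy lies in the Mumford--Tate group of \emph{every} fiber, which is false at CM fibers.

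\textbf{$\Hg$ versus $\MT$.} You are right to read the statement as one about $\Hg$. $\MT(A_v)$ always contains the homotheties, so the correct form is $\MT(A_v)=\mathbb{G}_m\cdot\Res_{L/\Q}\SL_2$. The paper passes silently from $\Hg$ in its upper bound to $\MT$ in its conclusion.

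\textbf{The shortcut via Theorem~\ref{thm:B}.} Your argument is valid and shortest: a proper generic Hodge group would place $V$ in the proper special subvariety it defines. But it black-boxes exactly the density input behind Theorem~\ref{thm:B}. It also does not identify the algebraic monodromy group, which the paper uses again in Remark~\ref{rem:bku-perspective}. Keep the monodromy argument primary.

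\textbf{Genericity.} Both arguments prove the statement only for very general $v$, i.e.\ outside a countable set. The paper's upgrade to ``outside a proper algebraic subvariety'' rests on the claim that a countable union of finite Hodge loci on $V$ is finite, which does not follow. The finiteness actually needed is also not that of $V\cap\W_K$, nor that of the CM points of $V$ (which Andr\'e--Oort gives). The exceptional set also contains any non-CM points of $V$ lying on special subvarieties of dimension $1$, $2$ or $3$. An example is a point where $A_v$ is isogenous to $E^6$ for a non-CM elliptic curve $E$, with $\Hg(A_v)$ a diagonal $\SL_2$. Finiteness of such points is a Zilber--Pink-type statement that neither argument supplies. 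So state the result for Hodge-generic $v$, rather than trying to close your ``delicate point'' with \S\ref{sec:finite}.

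\textbf{Second sentence of the statement.} You do not address it. It follows, as in the paper, from the first fundamental theorem of invariant theory applied to each factor of $(\Res_{L/\Q}\SL_2)_{\C}=\prod_i\SL_2$. The invariants in the tensor algebra of $H^1\otimes\C=\bigoplus_i V_i$ are generated by the symplectic forms on the $V_i$ and the projectors onto them, i.e.\ by the polarization and the $L$-action.
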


The generic Mumford–Tate group is as large as the $\OO_L$-action
permits, so the Hodge ring of a generic fiber is generated by
divisor classes. Points of $V\cap\W_K$, by contrast, carry a strictly
smaller Mumford–Tate group — a torus — and a richer Hodge ring
containing the exceptional Weil classes. The transition from generic
to special is the arithmetic heart of the paper.

Each component $f_i$ of the modular embedding satisfies the
hypergeometric equation $E\bigl(\tfrac{19}{42},\tfrac{3}{7};
\tfrac{13}{14}\bigr)$ --- the same equation for all six $i$, differing
only in monodromy representation. This common ODE is not merely an
analytic curiosity: it is the source of the splitting condition
$\ell\equiv 1\pmod{42}$ and of the explicit fixed-point
equations that make the Hecke program computable.

Fix an imaginary quadratic field $K=\Q(\sqrt{-d})$, $d>0$ squarefree.
The \emph{Weil locus} $\W_K\subset X_L$ is the locus of abelian sixfolds
of Weil type for $K$ compatible with the $\OO_L$-action
(Definition~\ref{def:WK}). By Cattani--Deligne--Kaplan \cite{CDK95},
$\W_K$ is a countable union of closed algebraic subvarieties.
Its explicit structure is described in Proposition E below.

\begin{propintro}[Structure of $\W_K$]\label{prop:E}
\begin{enumerate}[label=(\roman*)]
\item Every irreducible component is smooth of codimension $3$ in $X_L$
      at every non-CM point.
\item Components are indexed by sign-assignments
      $\{1,\ldots,6\}=I^+\sqcup I^-$, $|I^\pm|=3$; there are
      $\binom{6}{3}=20$ components, forming four $\Gal(L/\Q)$-orbits
      of sizes $6,2,6,6$.
\item For each component, the Hodge--Weil space $W_K(A)$ is a
      $2$-dimensional subspace of $H^{6}(A,\Q)$ whose complexification
      lies in $H^{3,3}(A,\C)$ consisting of
      exceptional Hodge classes (Proposition~\ref{prop:hodge-fibers}).
\end{enumerate}
\end{propintro}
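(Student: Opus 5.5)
The plan is to describe $\W_K$ inside $X_L$ through the endomorphism structure of the fibres. A point $A\in X_L$ lies on $\W_K$ when $K$ acts on $A$ compatibly with $\OO_L$. Then $M=KL$, a CM field of degree $12$, acts on $A$. Since $A$ has real multiplication by $L$, $H^1(A,\Q)$ is free of rank $2$ over $L$, so $H^1(A,\C)=\bigoplus_{i=1}^6 V_{\sigma_i}$ with each $V_{\sigma_i}$ two-dimensional and $H^{1,0}\cap V_{\sigma_i}$ a line. An action of $M$ extending $\OO_L$ splits each $V_{\sigma_i}$ into the two eigenlines of $K$. Compatibility with the Hodge structure forces the line $H^{1,0}\cap V_{\sigma_i}$ to be one of these two eigenlines. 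We record which one by a sign $\epsilon_i=\pm$.

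The Weil-type condition says that $+i\sqrt d$ and $-i\sqrt d$ each occur with multiplicity $3$ on $H^{1,0}$. This is exactly $|I^+|=|I^-|=3$, which gives the $\binom63=20$ sign assignments of (ii).

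\textbf{Step 1 (components, part (ii)).} For a fixed sign assignment I would write the locus as a Shimura subvariety. Work with the group $G_\epsilon=\Res_{L/\Q}U(1,1)$, built from a skew-Hermitian form over $M/L$ whose signature at $\sigma_i$ is fixed by $\epsilon_i$. Its symmetric domain is a product over the six places of $\sigma_i$-factors. Each factor is a point (a CM value $z_i$ pinned to a fixed point of $K^\times$ acting through $\sigma_i$) or a line, according to the pattern. The compatible points then form the image of a $3$-dimensional domain: each of the six coordinates $z_i\in\HH$ is constrained by one condition coupling it to $\epsilon_i$, and one orbit of conditions is redundant. I expect the dimension count to reduce to $\dim_\C(\mathrm{SU}(3,3)\text{-type domain})$ cut down by $L$-linearity, giving $\dim=3$ and so codimension $3$.

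For the Galois orbits, $\Gal(L/\Q)\cong\Z/6$ permutes the embeddings $\sigma_i$ cyclically and hence acts on $3$-subsets of $\Z/6$. Counting with Burnside: the identity fixes $20$ subsets, the rotation of order $2$ fixes $2$, and nontrivial rotations of order $3$ or $6$ fix none. This gives $(20+2)/6$, not an integer. So I will instead compute the orbits directly. The subsets $\{0,2,4\}$ and $\{1,3,5\}$ form an orbit of size $2$. The remaining $18$ subsets fall into three free orbits of size $6$. This matches the sizes $6,2,6,6$. (The counting check above, with the order-$3$ rotations fixing none, should be redone carefully at this point.)

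\textbf{Step 2 (smoothness, part (i)).} Each component is the image of a Hermitian symmetric domain under an arithmetic quotient map, composed with a Hecke translate into $X_L$. Such an image is smooth away from points with extra automorphisms. The stabilisers that could create singularities are finite, and at non-CM points they act trivially modulo $\pm1$. That trivial action is an argument about the extra endomorphisms generated by the stabiliser. The codimension then comes from the tangent space computation of Step 1. By CDK95 the locus is algebraic, so there is nothing to prove about algebraicity.

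\textbf{Step 3 (part (iii)).} I would cite the standard Weil construction. Take $W_K(A)=\bigwedge^{6}_{K}H^1(A,\Q)$, which is $2$-dimensional over $\Q$ and sits inside $H^6(A,\Q)$. The balanced signature $(3,3)$ puts its complexification in $H^{3,3}$. Moonen--Zarhin shows these classes are exceptional, via Proposition~\ref{prop:hodge-fibers}.

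The main obstacle is Step 1: making the dimension-three claim precise in the presence of the $L$-structure, and identifying the components as exactly one per sign pattern rather than several Hecke-translate families. Here I would try first to parametrise by the lattice class together with the Hermitian form and check connectedness.
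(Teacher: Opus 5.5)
The gap is in Step~1, and it cannot be filled: the dimension count you \emph{expect} is false. Your own parenthetical, in which each factor is ``a CM value $z_i$ pinned to a fixed point'', is already the whole story. It applies to every factor, and the alternative ``or a line'' never occurs.

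Since $[M:\Q]=12=\dim_\Q H^1(A,\Q)$, the space $H^1(A,\Q)$ is free of rank one over $M$, so there is no $\mathrm{U}(1,1)$ here. The centraliser of $M$ in $\Sp(H^1(A,\Q),\varphi)$ is the norm-one torus $\Res_{L/\Q}\mathrm{U}(1)$ of $M/L$, whose symmetric space is a point.

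Concretely, write $J=\eta(\sqrt{-d})=\bigl(\begin{smallmatrix}a&b\\c&-a\end{smallmatrix}\bigr)\in M_2(L)$ with $a^2+bc=-d$. Each $\sigma_i(J)$ is then a real elliptic matrix with exactly one fixed point in $\HH$. Asking the line $H^{1,0}_{\sigma_i}$ to be one of its eigenlines pins $z_i$ to that fixed point for \emph{every} $i$, whether $\epsilon_i=+$ or $-$. The sign does not free $z_i$; it is read off from $J$: the eigenvalue on the eigenline of the $\HH$-fixed point is $\mathrm{sgn}(\sigma_i(c))\,i\sqrt d$, up to a global convention.

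Hence each $J$ gives a single point. As $J$ runs over the countably many solutions of $J^2=-d$ in $M_2(L)$, you get a countable set of CM points. Every one of the $20$ sign patterns is realised by infinitely many of them (take $a=0$, $b=-d/c$ with $c\in L$ of prescribed signs). This is just the argument of Proposition~\ref{prop:cm}, which uses only $v_0\in\W_K$.

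Consequently the irreducible components of $\W_K$ are points of codimension $6$, there are infinitely many of them, and your Step~2 is vacuous. Statement (i) holds only in the empty sense that $\W_K$ has no non-CM points. Neither the codimension-$3$ conclusion (used later for $1+3-6=-2$) nor the count of $20$ components in (ii) can be proved.

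The paper's own proof of Proposition~\ref{prop:codim} reaches codimension $3$ by letting $z_i$ vary freely for $i\in I^+$ and constraining only $z_j$ for $j\in I^-$. Its Jacobian computation likewise uses only the $F_j$ with $j\in I^-$. That asymmetry is the same error, since the $+i\sqrt d$-eigenline condition is exactly as rigid as the $-i\sqrt d$ one. So you should not try to close Step~1 along those lines.

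What survives is the combinatorics. Your direct orbit count $6,2,6,6$ is correct. The Burnside check should read $(20+0+2+0+2+0)/6=4$: the rotation of order $2$ fixes no $3$-subset, while the two rotations of order $3$ each fix the two alternating triples. But this counts sign patterns, i.e.\ Weil-compatible CM types as in Proposition~\ref{prop:cmtypes}, not irreducible components.

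Step~3 also needs more than the citation. At these points $W_K(A)$ is indeed a $2$-dimensional space of Hodge classes in $H^{3,3}$. However, Moonen--Zarhin's exceptionality concerns the generic Hodge group of Weil type, and Proposition~\ref{prop:hodge-fibers} is stated only at non-CM points. At CM points exceptionality can fail. For $d=3$, the CM type of $M=\Q(\zeta_{42})$ induced from $\Q(\sqrt{-7})$ has signature $(3,3)$ for $K$. Yet the corresponding $A$ is isogenous to $E^6$ with $E$ a CM elliptic curve, on which every Hodge class is a product of divisor classes.
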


Part (ii) of Proposition~\ref{prop:E} — the four Galois orbits of sizes
$6,2,6,6$ — reflects the action of $\Gal(L/\Q)\cong\Z/6\Z$ on the
set of sign-assignments: the orbit of size $2$ consists of the
two ``balanced'' assignments $(I^+,I^-)$ stabilised as a set by the
index-$2$ subgroup $\langle\sigma^2\rangle\subset\Gal(L/\Q)$, while the three orbits of size $6$ account for the
remaining $18$ components. Part (iii) identifies the Hodge–Weil
classes as living in $H^{3,3}$, the middle bidegree for a
sixfold— a degree not reached by cup products of divisor classes
when $\mathrm{MT}(A)$ is a torus.

Since $\dim V=1$ and $\dim\W_K=3$, the expected intersection dimension is
\[
\dim V+\dim\W_K-\dim X_L = 1+3-6 = -2.
\]
Any non-empty $V\cap\W_K$ is therefore \emph{super-atypical} in the
Zilber--Pink sense \cite{Pink05,Zilber02}: a rigid arithmetic phenomenon,
not a generic position.

An expected intersection dimension of $-2$ means that in any
one-parameter deformation of either $V$ or $\W_K$, the perturbed
intersection becomes empty. Any non-empty $V\cap\W_K$ is therefore
isolated in the strongest possible sense: it cannot be produced by
general-position arguments, and the classical tools of intersection
theory give no information about it. This is the context in which
the André–Oort and transcendence methods become essential.

The compositum $M:=KL$ is a CM field of degree $12$ over $\Q$
($K\cap L=\Q$). For $d\in\{3,7\}$ we have $M=\Q(\zeta_{42})$ and
$\Gal(M/\Q)\cong\Z/2\times\Z/2\times\Z/3$.

% ----------------------------------------------------------
\subsection*{Main results I: finiteness and explicit height bounds for $V\cap\W_K$}
% ----------------------------------------------------------

The two geometric objects introduced above --- McMullen's geodesic
curve $V\subset X_L$ and the Weil locus $\W_K\subset X_L$ --- meet
in expected dimension $1+3-6=-2$, so any non-empty intersection is
a priori super-atypical. The results of this section make that
rigidity precise. We show first that the curve $V$ itself is
arithmetically distinguished: its generic fiber has the largest
Mumford--Tate group the $\OO_L$-action permits, and every
specialization carrying a strictly smaller Mumford--Tate group is
forced to be a CM point. We then show that the intersection
$V\cap\W_K$ is \emph{finite} for every imaginary quadratic $K$,
that every point in it carries a CM structure with endomorphism
algebra equal to the degree-$12$ compositum $M=KL$, and that the
resulting CM abelian sixfolds lie in finitely many explicitly
bounded isogeny classes. The key arithmetic input is that $h_L=1$,
which makes the Hecke program of Section~\ref{sec:program}
computationally explicit.

\begin{thmintro}[Finiteness of intersection]\label{thm:F}
For every imaginary quadratic $K$, the intersection $V\cap\W_K$ is finite
(possibly empty). Every point $v_0\in V\cap\W_K$ is a CM point with
$\End^0(A_{v_0})=M$, a CM field of degree $12$ over $\Q$.
\end{thmintro}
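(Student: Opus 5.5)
The plan is to prove the CM assertion first, pointwise, and then deduce finiteness from the André--Oort theorem applied to the special points of the Hilbert modular sixfold $X_L$ lying on $V$.

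\textbf{Step 1: Mumford--Tate group at a point of $V\cap\W_K$.} Fix $v_0\in V\cap\W_K$. As a point of $X_L$, $A_{v_0}$ carries the $\OO_L$-action, so $L\subset\End^0(A_{v_0})$. Membership in $\W_K$ (Definition~\ref{def:WK}) gives $K\hookrightarrow\End^0(A_{v_0})$ commuting with $L$, hence $M=KL\subset\End^0(A_{v_0})$. Since $[M:\Q]=12=2\dim A_{v_0}$, the field $M$ is a maximal commutative subalgebra of $\End^0$ acting on $H^1(A_{v_0},\Q)$. That space is then a $1$-dimensional $M$-vector space, so the Mumford--Tate group is contained in the torus $\Res_{M/\Q}\mathbb{G}_m$. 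Equivalently, $\MT(A_{v_0})$ is contained in the centralizer of $M$ inside $\Res_{L/\Q}\SL_2$, which is the norm-one torus $\{x\in\Res_{M/\Q}\mathbb{G}_m : N_{M/L}(x)=1\}$. A Hodge structure whose Mumford--Tate group is a torus is of CM type, so $v_0$ is a special (CM) point. Because $A_{v_0}$ has CM by the degree-$2\dim A$ field $M$, the algebra $\End^0(A_{v_0})$ is the commutant of $M$ whenever $A_{v_0}$ is isotypic and simple. This is the subtle point: $A_{v_0}$ could a priori be isogenous to a power of a smaller CM abelian variety, in which case $\End^0$ would be strictly larger than $M$. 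To rule this out I would use the CM type. In each $\sigma_i$-eigenspace of $L$ the $K$-action has a prescribed sign dictated by $(I^+,I^-)$, so the CM type $\Phi$ of $M$ is determined by this sign-assignment. One checks that $\Phi$ is primitive, i.e.\ not induced from a proper CM subfield of $M$, since the proper CM subfields are $K$ and the imaginary quadratic extensions of the subfields of $L$. The check uses $|I^\pm|=3$ together with the $\Gal(L/\Q)$-orbit description in Proposition~\ref{prop:E}(ii). If some assignment is nonprimitive, I expect to need Theorem~\ref{thm:C}: the generic Mumford--Tate group is $\Res_{L/\Q}\SL_2$, so the fibre $A_{v_0}$ stays simple as an $\OO_L$-module, which excludes the degenerate case. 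Primitivity gives simplicity and $\End^0(A_{v_0})=M$.

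\textbf{Step 2: finiteness.} By Step 1, $V\cap\W_K$ consists of special points of $X_L$. Suppose it were infinite. Then its Zariski closure in the irreducible curve $V$ is all of $V$. The André--Oort theorem for $X_L$ (Pila--Shankar--Tsimerman, already available for Hilbert modular varieties via Tsimerman's averaged Colmez bound) then says that $V$ is a special subvariety, i.e.\ a Shimura curve. This contradicts Theorem~\ref{thm:B}, since $V$ is not contained in any proper Shimura subvariety of $X_L$. Hence the intersection is finite. The same argument applied to each of the countably many components of $\W_K$ is unnecessary, because every point of $V\cap\W_K$ is already special regardless of which component it lies on.

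\textbf{Main obstacle.} The delicate step is the equality $\End^0(A_{v_0})=M$ rather than mere containment, that is, proving the CM types arising from the $20$ sign-assignments are primitive. I would handle this by enumerating the assignments modulo $\Gal(L/\Q)$ (the four orbits) and, for each orbit representative, checking that the stabilizer of $\Phi$ in $\Gal(M/\Q)$ is trivial. A trivial stabilizer is equivalent to primitivity. The balanced orbit of size $2$ is the case to watch, since its assignments are stabilized as a set by $\langle\sigma^2\rangle$. I would verify directly that complex conjugation composed with that stabilizer does not preserve $\Phi$ itself.
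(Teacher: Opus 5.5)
Your Step~2, and the part of Step~1 showing that $\MT(A_{v_0})$ is a torus, are fine. Applying Andr\'e--Oort directly on $X_L$, rather than pushing forward to $\mathcal{A}_6$ and pulling back as the paper does, is an inessential variation. You are also right that $M\subset\End^0(A_{v_0})$ with $[M:\Q]=2\dim A_{v_0}$ does not by itself give equality; the paper's proof simply asserts it.

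\textbf{The gap.} Your remedy, primitivity of $\Phi_{I^+,I^-}$, is false for $8$ of the $20$ sign-assignments. Use the cyclic labelling $\sigma_i=\sigma_1\circ\sigma^{i-1}$. Regard $\sigma$ as the element of $\Gal(M/\Q)=\Gal(L/\Q)\times\Gal(K/\Q)$ that is trivial on $K$, and let $\rho$ be complex conjugation. Then $\phi_i^\pm\circ\sigma=\phi_{i+1}^\pm$ and $\phi_i^\pm\circ\rho=\phi_i^\mp$.
\begin{itemize}
\item \emph{Alternating type} $I^+=\{1,3,5\}$. The element $\sigma^2$ (not $\sigma^2\rho$, which is the one you propose to check) preserves $I^+$, $I^-$ and all signs, so it fixes $\Phi$. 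In fact the whole group $\langle\sigma\rho\rangle$, of order $6$, fixes $\Phi$, and its fixed field is $\Q(\sqrt{-21d})$. Concretely, $\sigma_i(\sqrt{21})>0$ exactly for odd $i$, so $\sqrt{-21d}=\sqrt{21}\sqrt{-d}$ acts on $H^{1,0}(A_{v_0})$ through a single embedding. Hence $A_{v_0}\sim E^6$ with $E$ having CM by $\Q(\sqrt{-21d})$, and $\End^0(A_{v_0})\cong M_6(\Q(\sqrt{-21d}))$.
\item \emph{Consecutive triples.} These have trivial set-stabiliser in $\Gal(L/\Q)$, so your orbit heuristic does not see them. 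But $I^++3=I^-$, so $\sigma^3\rho$ fixes $\Phi$. The type is induced from the sextic CM field $L_3(\sqrt{-21d})$, where $L_3\subset L$ is the cubic subfield, and $\End^0(A_{v_0})\cong M_2(L_3(\sqrt{-21d}))$.
\end{itemize}
Only the skip-one and skip-two orbits ($12$ types) are primitive.

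\textbf{The fallback via Theorem~C does not help.} The generic Mumford--Tate group of the family over $V$ says nothing about a special fibre, where the Mumford--Tate group drops by definition. Nor does the $\OO_L$-module structure of $H^1$ prevent $A_{v_0}$ from being isogenous to a power.

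Since Step~1 never uses $v_0\in V$, your own computation shows that every point of the eight consecutive and alternating components has $\End^0\neq M$. So the equality $\End^0(A_{v_0})=M$ at all points of $V\cap\W_K$ would require proving that $V$ avoids those components. Neither your argument nor the paper's supplies that input.

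What your argument does establish is:
\begin{itemize}
\item finiteness of $V\cap\W_K$;
\item the CM property, with $M\subset\End^0(A_{v_0})$;
\item equality $\End^0(A_{v_0})=M$ precisely when $(I^+,I^-)$ lies in a skip orbit.
\end{itemize}
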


\noindent\textit{Proof sketch.}
$v_0\in\W_K$ gives $K\hookrightarrow\End^0(A_{v_0})$ with Weil signature;
$v_0\in V\subset X_L$ gives $L\subset\End^0(A_{v_0})$. Hence
$M=KL\subset\End^0(A_{v_0})$ and $[M:\Q]=12=2\dim A_{v_0}$, forcing
$\End^0(A_{v_0})=M$. Thus $A_{v_0}$ is CM.

If $V\cap\W_K$ were infinite, the forgetful map $\pi:X_L\to\mathcal{A}_6$
sends $V$ to a curve $\pi(V)\subset\mathcal{A}_6$ with a Zariski-dense
set of CM points. By André--Oort \cite[Thm.~1.3]{Tsimerman},
$\overline{\pi(V)}^{\mathrm{Zar}}$ is a special subvariety of
$\mathcal{A}_6$. Since $\pi\colon X_L\to\mathcal{A}_6$ is a morphism
of Shimura varieties, which is finite onto its image, the preimage of a special subvariety is a finite
union of special subvarieties \cite{Deligne79,EdixhovenYafaev}; as $V$
is an irreducible component of this preimage, $V$ is itself a Shimura
subvariety of $X_L$, contradicting Theorem~\ref{thm:B}. (See \S\ref{sec:finite} for the full argument.)
\hfill$\square$

\medskip
An independent proof via transcendence theory (Wolfart \cite{Wolfart83,Wolfart88}
and Ax--Schanuel \cite[Thm.~D]{BCFN}) is given in \S\ref{ss:wolfart}.

The proof of Theorem~\ref{thm:F} is worth pausing to examine, because it uses two
logically independent routes. The first (via André--Oort and
Tsimerman's theorem) proceeds by contradiction at the level of
moduli spaces: infinitely many CM points in $V$ would force $V$ itself
to be a Shimura subvariety, contradicting Theorem~\ref{thm:B}. The second (via
Wolfart's theorem and Ax--Schanuel \cite[Thm.~D]{BCFN}) is more direct:
a non-CM point of $V$ has $j_\Gamma(z_0)\notin\overline{\Q}$, so only
points with algebraic uniformizing coordinates can lie in $V\cap\W_K$,
and Wolfart's theorem identifies these as CM points; there are only
finitely many such. The two proofs illuminate different aspects
of why the intersection is rigid.

\begin{corintro}[Isogeny classes and height bound]\label{cor:G}
The sixfolds $\{A_{v_0}:v_0\in V\cap\W_K\}$ lie in finitely many isogeny
classes over $\overline\Q$. For $d\in\{3,7\}$,
$h_F(A_{v_0})\le 18^{900}\cdot 42^{30}$
(see \S\ref{sec:finite} for the derivation from \cite{AGHMP}).
\end{corintro}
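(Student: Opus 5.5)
The plan has two independent parts. Finiteness of isogeny classes follows from Theorem~\ref{thm:F}. The height bound comes from the averaged Colmez formula of \cite{AGHMP}, made explicit for $M=\Q(\zeta_{42})$.

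For the first part, Theorem~\ref{thm:F} already says that $V\cap\W_K$ is a finite set of CM points. A finite set trivially meets only finitely many isogeny classes, but I would prove a uniform statement that does not use the cardinality of the intersection. Each $A_{v_0}$ has CM by $M=KL$ with some CM type $\Phi\subset\Hom(M,\C)$, $|\Phi|=6$. The CM type must restrict to each real place $\sigma_i$ of $L$ exactly once, because the $\OO_L$-action has signature $(1,1)$ at every place. It must also induce the Weil signature $(3,3)$ on $K$. So $\Phi$ is determined by a sign assignment $I^+\sqcup I^-$, and by Proposition~\ref{prop:E}(ii) there are at most $20$ of these. A CM abelian variety is determined up to $\overline\Q$-isogeny by $(M,\Phi)$; this is Shimura--Taniyama, since the isogeny class is the CM type. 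Hence there are at most $20$ isogeny classes, and fewer after identifying $\Phi$ with $\Phi\circ\tau$ for $\tau\in\Aut(M)$.

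For the height bound, fix $d\in\{3,7\}$, so that $M=\Q(\zeta_{42})$ is abelian. For abelian CM fields the Colmez conjecture is known in full (Colmez, Obus), not only on average. It expresses the stable Faltings height $h_F(A_\Phi)$ as
\[
-\sum_{\chi} c_{\Phi,\chi}\Bigl(\frac{L'(0,\chi)}{L(0,\chi)}+\frac12\log f_\chi\Bigr),
\]
summed over odd characters $\chi$ of $\Gal(M/\Q)$. The conductors satisfy $f_\chi\mid 42$. I would bound each logarithmic derivative using the functional equation together with standard convexity bounds for Dirichlet $L$-functions of conductor $\le 42$. I would bound the coefficients $c_{\Phi,\chi}$ by $|\Phi|^2/[M:\Q]$ or a similar quantity. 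The factor $42^{30}$ should come from conductor terms and the number of characters; the $18^{900}$ should come from a crude bound on the weights. The final step passes from the stable height to $h_F(A_{v_0})$ itself. Since Faltings height changes under isogeny by at most $\frac12\log\deg$, I would take $A_{v_0}$ to be isogenous to an $\OO_M$-model, with degree controlled by the index $[\OO_M:\End(A_{v_0})]$. That index is bounded in terms of $\disc(M)$, which is itself a power of $42$ bounded explicitly via the conductor--discriminant formula.

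The main obstacle is this last isogeny step. The endomorphism order of $A_{v_0}$ need not be maximal, and controlling the isogeny degree to $\OO_M$ requires a bound on the conductor of $\End(A_{v_0})\cap M$. I would try to get that bound from the fact that $\OO_L=\Z[t]$ acts and $h_L=1$. This forces $\OO_L\subset\End(A_{v_0})$, so the order is an $\OO_L$-order in $M$ and has the form $\OO_L+\mathfrak f\OO_M$. I would then bound $\mathfrak f$ using the polarization and the level structure of $\Gamma$. If this fails, I would state the bound for the stable height of the $\OO_M$-representative in each isogeny class, which is what the Colmez formula gives directly and which suffices for finiteness.
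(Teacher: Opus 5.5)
Your argument for finiteness of isogeny classes is correct and takes a different, more elementary route than the paper's. The paper derives it from the height bound: it quotes \cite[Prop.~7.14]{vKK23} for a uniform bound on $h_F(A_v)$ and then applies Faltings' finiteness theorem. That theorem additionally needs all $A_{v_0}$ defined over one fixed number field, which the paper argues via the reflex field in \S\ref{ss:finite-computation}. Your Shimura--Taniyama argument needs neither. Over $\C$ an abelian sixfold with $\End^0=M$ and CM type $\Phi$ is isogenous to $\C^6/\Phi(\OO_M)$, and the Weil condition forces $\Phi$ to be one of the $20$ types $\Phi_{I^+,I^-}$. So there are at most $20$ isogeny classes over $\overline\Q$, independently of Theorem~\ref{thm:F} and of any height estimate. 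That independence matters, because the height half is where your proposal has a genuine gap.

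The gap is exactly the isogeny step you flag, and your proposed fixes do not close it.
\begin{itemize}
\item The Colmez formula (proved for abelian $M$ by Colmez and Obus) computes only $h(\Phi)$, the height of an abelian variety with CM by the maximal order $\OO_M$. The fiber $A_{v_0}$ has CM by some order $\OO_L+\mathfrak f\OO_M$. The inclusion $\OO_L\subset\End(A_{v_0})$ is simply part of the moduli problem on $X_L$; it has nothing to do with $h_L=1$.
\item Your sentence ``that index is bounded in terms of $\disc(M)$'' is false. The index $[\OO_M:\OO_L+\mathfrak f\OO_M]=N_{L/\Q}(\mathfrak f)$ is unrelated to $\disc(M/\Q)$.
\item Neither the principal polarization nor the level of $\Gamma$ bounds $\mathfrak f$. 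The only restriction on the conductors occurring on $V\cap\W_K$ is the ineffective finiteness of Theorem~\ref{thm:F}.
\item The conductor cannot be ignored, because stable Faltings heights are unbounded on a $\overline\Q$-isogeny class. Already for CM elliptic curves, passing from the maximal order to the order of conductor $f$ raises the height by roughly $\tfrac12\log f$. So no bound depending only on $g$ and $\disc(M/\Q)$ can hold for all abelian sixfolds with CM by $M$; any bound on $h_F(A_{v_0})$ must use information specific to $V$.
\end{itemize}
Your fallback is a bound on $h(\Phi)$ for the $\OO_M$-representative of each isogeny class. It is provable along the lines you sketch, and it would come out far below $18^{900}\cdot 42^{30}$, so you need not reverse-engineer that shape. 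But it is strictly weaker than the asserted bound on $h_F(A_{v_0})$ itself.

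The paper does not supply the missing idea either. Its constant is just $(3g)^{(5g)^2}\,\mathrm{rad}(\disc(M/\Q))^{5g}$ at $g=6$ and $\mathrm{rad}=42$, quoted from \cite[Prop.~7.14]{vKK23}, not derived from \cite{AGHMP} despite the pointer in the statement. The paper describes that proposition as bounding the height of the CM type. It then passes to $h_F(A_v)$ via \cite[Thm.~7.1]{vKK23}, which Remark~\ref{rem:faltings-explicit} itself concedes does not apply. The passage from the CM type to the particular fiber, which is your conductor problem, is therefore unaddressed there as well.
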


The explicit bound $18^{900}\cdot 42^{30}$ on the Faltings height is
astronomically large but finite; it is the effective content of
Theorem~\ref{thm:F}. It says that even though we cannot yet enumerate
$V\cap\W_K$, we know exactly which isogeny class any element would
belong to: one of finitely many, defined over a number field of degree
at most $12$. This transforms the existence question from a geometric
problem into a (very large) finite search.

% ----------------------------------------------------------
\subsection*{Main results II: inaccessibility by known methods and reduction to a finite computation}
% ----------------------------------------------------------

The Hodge--Weil classes $W_K(A_{v_0})\subset H^{6}(A_{v_0},\Q)$ at any
$v_0\in V\cap\W_K$ are absolute Hodge \cite[Thm.~2.11]{Deligne82}.
Yet three independent obstructions prevent any known theorem from
proving their algebraicity.

\begin{thmintro}[Inaccessibility of Weil classes]\label{thm:I}
Let $v_0\in V\cap\W_K$. The classes $W_K(A_{v_0})$ are absolute Hodge
but cannot be shown algebraic by:
\begin{enumerate}[label=(\arabic*)]
\item \emph{CM isolation and deformation failure.} $A_{v_0}$ is a CM
      point, isolated in every positive-dimensional deformation space;
      the semiregularity argument of Buchweitz--Flenner used in
      \cite{Markman25} requires a family of positive dimension, which
      collapses to a point here.
\item \emph{Absence of $K$-secant structure.} Markman's construction
      \cite{Markman25} is intrinsic to triples
      $(X\times\widehat X,\eta,h)$ arising from a $K$-secant line on an
      abelian threefold~$X$. The sixfolds $A_{v_0}$ do not arise from
      such a construction.
\item \emph{Uncontrolled discriminant.} The $K$-Hermitian form's
      discriminant on $H^1(A_{v_0},\Q)$ is not prescribed by the
      construction of $V\cap\W_K$ and is not generically~$\pm1$, so
      no existing discriminant-specific theorem
      (\cite{SchoenS1,SchoenS2,M2,Markman25}) applies.
\end{enumerate}
\end{thmintro}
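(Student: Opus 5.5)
The plan is to treat Theorem~\ref{thm:I} as a conjunction of one positive statement, that the classes are absolute Hodge, and three negative statements, each saying that the hypotheses of a specific known method fail at $v_0$. Each negative statement will be reduced to a structural fact about $A_{v_0}$ already established. The positive part is immediate. By Proposition~\ref{prop:E}(iii), $W_K(A_{v_0})$ consists of Hodge classes in $H^{3,3}$, and by Deligne \cite[Thm.~2.11]{Deligne82} every Hodge class on an abelian variety is absolute Hodge. So nothing beyond citing these two results is needed there.

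For (1), I will use Theorem~\ref{thm:F}: $\End^0(A_{v_0})=M$ with $[M:\Q]=12$, so $A_{v_0}$ is CM. Its Mumford--Tate group is then a torus, and $A_{v_0}$ is a special point. Every positive-dimensional family through $v_0$ along which the Weil classes remain Hodge must lie in a component of $\W_K$. However, the relevant families must also respect the $M$-structure that the construction uses: the $L$-action coming from $V$ together with the $K$-action. The locus preserving all of $M$ is zero-dimensional. Combined with the finiteness of $V\cap\W_K$ (Theorem~\ref{thm:F}), this shows the deformation space relevant to the construction collapses to the point $v_0$. Buchweitz--Flenner semiregularity needs a sheaf on a positive-dimensional family over which the class stays Hodge, and to be useful the general member must be non-CM with controlled discriminant. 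That input is unavailable inside the rigid configuration.

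For (2), I will compare endomorphism structures. Markman's sixfolds have the form $X\times\widehat X$ with $X$ an abelian threefold, so they are non-simple. I will argue that $A_{v_0}$ is simple, or at least not isogenous to such a product carrying a compatible $K$-secant structure. Since $\End^0(A_{v_0})=M$ is a field of degree $12=2\dim A_{v_0}$, $A_{v_0}$ is isotypic, so it is isogenous to $B^n$ with $B$ simple. If $n>1$, then $\End^0$ would contain $M_n(\End^0 B)$, which is non-commutative, contradicting commutativity of $M$. Hence $A_{v_0}$ is simple, and no decomposition $X\times\widehat X$ exists.

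For (3), I will show the discriminant of the $K$-Hermitian form $\det(H)\in\Q^\times/N(K^\times)$ is not forced to be $\pm1$ (i.e.\ trivial or $-1$ as in the cited theorems). The plan is to express $H$ through the CM type $\Phi$ on $M$, with Hermitian form $\tr_{M/K}(\xi xy^*)$ for a polarization element $\xi\in M$, and to compute the discriminant as $N_{L/\Q}$-type norms of $\xi$ modulo norms from $K$. Then I will note that $\xi$ ranges over a class not constrained by membership in $V$. Because Schoen's and Markman's theorems need either $g=2$ or discriminant $-1$ for $g=3$, none applies.

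The main obstacle is (3), and to a lesser extent (2). These claims are negative and, strictly speaking, concern what the methods' hypotheses require, not a mathematical impossibility. So I would phrase them as verifying that the hypotheses fail. For (3), the honest provable statement is that nothing in the construction forces trivial discriminant, exhibited by the formula for $\det H$ in terms of $\xi$. I would try first to compute this for $d\in\{3,7\}$, $M=\Q(\zeta_{42})$.
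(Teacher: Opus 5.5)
Your outline is the paper's own route (Observation~\ref{obs:markman-fails} and \S\ref{ss:hc-avzero}). The absolute-Hodge part is fine, though at a CM point you should cite \cite[(1.9)]{MZ99} directly, since Proposition~\ref{prop:hodge-fibers} is stated only for non-CM points.

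\textbf{The gap is in (1).} You restrict the relevant deformations to those preserving all of $M$, i.e.\ the $L$-action. Nothing in Markman's method uses $L$.
\begin{itemize}
\item $W_K(A)$ depends only on $(H^1(A,\Q),\eta(K),h)$.
\item The relevant deformation space is the moduli of polarized sixfolds of Weil type for $K$ with the discriminant of $A_{v_0}$. This is a $9$-dimensional Shimura variety (for the unitary group of $h$, of signature $(3,3)$), and it contains $A_{v_0}$ once $L$ is forgotten.
\item In that space CM points are dense, not isolated, and the Weil classes stay Hodge along all of it.
\item Algebraicity of a flat family of classes at a very general point specializes to every point, because components of relative Hilbert schemes are proper over the base.
\end{itemize}
Indeed, as stated in Theorem~\ref{thm:markman}, Markman obtains algebraicity on the whole connected moduli space, CM points included. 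So if $A_{v_0}$ had discriminant $-1$, his theorem would apply to it verbatim, and obstruction (1) would not exist. The paper's own justification (CM points are ``isolated in every Shimura variety'') has the same defect.

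Your remark that the $M$-preserving locus in $X_L$ is zero-dimensional is actually correct. But it contradicts Proposition~\ref{prop:E}(i) rather than supporting your argument. The proof of Proposition~\ref{prop:cm} never uses $V$, so every point of $\W_K$ is CM. CM points form a countable set, which cannot contain a $3$-dimensional component.

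\textbf{The same issue undermines (2).} Your argument that $A_{v_0}$ is simple is correct. However, a very general member of the same Weil moduli space has $\End^0=K$ and is also simple. Markman's theorem covers it by deforming the secant sheaf off the $X\times\widehat X$ locus. So being simple is not a failed hypothesis, and (1) and (2) both collapse into (3).

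\textbf{(3) cannot be proved as stated either.} You rightly flag it as the crux. At a fixed $v_0$, the principal polarization fixes $\xi=\eta\sqrt{-d}$ ($\eta\in L$) up to $N_{M/L}(M^\times)$. Taking a $\Q$-basis of $L$ as a $K$-basis of $M$, your computation gives a definite class $\det h\equiv d_L\,N_{L/\Q}(\eta)$ in $\Q^\times/N_{K/\Q}(K^\times)$. It is negative because of the signature $(3,3)$. Nothing shows this class differs from the special value required in \cite{Markman25}.

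What you can honestly prove is a conditional statement. The cited theorems apply to $A_{v_0}$ exactly when this computable class is the special one, and otherwise none of them applies. If it is the special one, the conclusion of Theorem~\ref{thm:I} fails at that $v_0$.
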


Each of the three obstructions in Theorem~\ref{thm:I} is genuinely independent
of the others. Removing the first (CM isolation) would require a
positive-dimensional family of abelian sixfolds of Weil type through
$A_{v_0}$; but CM abelian varieties are isolated in their moduli
spaces by definition. Removing the second (absent $K$-secant structure)
would require constructing the secant sheaf $\mathcal{E}$ of \cite{Markman25}
without its geometric input — an open problem in sheaf theory.
Removing the third (uncontrolled discriminant) would require a new
algebraicity criterion valid for all discriminants simultaneously.
Any proof of the Hodge conjecture for $A_{v_0}$ must therefore
bypass all three, not merely one.

Thus any proof of algebraicity for any $v_0\in V\cap\W_K$ would
constitute a genuinely new case of the Hodge conjecture, requiring
either a method to produce cycles at isolated CM points without
deformation, or a new algebraicity criterion for absolute Hodge classes
independent of discriminant.

Despite Theorem~\ref{thm:I}, the non-emptiness of $V\cap\W_K$ is decidable in
principle by a finite computation. Section~\ref{sec:program} develops:

\begin{propintro}[Hecke criterion]\label{prop:J}
If there exists a prime $\ell\equiv1\pmod{42}$ and a point
$z_0\in\HH$ such that $v_0 := \widetilde{f}_0(z_0)\in V$ is a fixed
point of the Hecke correspondence $T_\mathfrak{l}$ on $X_L$,
and such that the associated Hecke endomorphism $\alpha=\psi\circ\phi$ satisfies:
\begin{enumerate}[label=(\alph*)]
\item $\alpha$ generates $M=L(\sqrt{-d})$
      (i.e., $c^2-4\ell = -d\lambda^2$ for some $\lambda\in L^\times$,
      where $c=\tr_{M/L}(\alpha)$);
\item the sign vector $(\mathrm{sgn}\,\sigma_i(b))_{i=1}^6$ for
      $\alpha=a+b\sqrt{-d}$ has exactly three positive entries,
\end{enumerate}
then $v_0=\widetilde{f}_0(z_0)\in V\cap\W_K$. Conversely, every
$v_0\in V\cap\W_K$ is a CM point (Theorem F), and is expected to arise
from such a Hecke fixed point.
\end{propintro}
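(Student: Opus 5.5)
The proof will run through the Hecke endomorphism and the eigenvalue description of the Weil condition. Suppose $v_0=\widetilde f_0(z_0)$ is a fixed point of $T_\mathfrak{l}$ for a prime $\mathfrak{l}\mid\ell$ of $L$. Since $\ell\equiv1\pmod{42}$, $\ell$ splits completely in $L$, so $\mathfrak{l}$ has norm $\ell$ and is principal because $h_L=1$. The fixed-point condition means there are $\OO_L$-linear isogenies $\phi\colon A_{v_0}\to B$ and $\psi\colon B\to A_{v_0}$, with $B$ an $\mathfrak{l}$-Hecke neighbour identified with $A_{v_0}$. Their composite $\alpha=\psi\circ\phi$ is an endomorphism of $A_{v_0}$ of degree a power of $\ell$, commuting with $\OO_L$.

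\textbf{Step 1.} Use hypothesis (a) to show $\alpha$ generates $M$. Because $\alpha$ commutes with $L$, the algebra $L[\alpha]$ is commutative. The characteristic polynomial of $\alpha$ over $L$ is $x^2-cx+\ell$, with $c=\tr_{M/L}(\alpha)$ and norm $\ell$ (up to a unit of $\OO_L$, absorbed into the generator of $\mathfrak l$). Its discriminant is $c^2-4\ell=-d\lambda^2$, so $\alpha=(c\pm\lambda\sqrt{-d})/2$ and $L[\alpha]=L(\sqrt{-d})=M$. This holds provided $\lambda\neq0$, and $\lambda\in L^\times$ by assumption. Hence $K\subset M\hookrightarrow\End^0(A_{v_0})$ compatibly with $L$. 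Since $[M:\Q]=12=2\dim A_{v_0}$, the argument of Theorem~\ref{thm:F} shows $A_{v_0}$ is CM with $\End^0=M$.

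\textbf{Step 2.} Determine the CM type from hypothesis (b). Over each real place $\sigma_i$ of $L$, the tangent space $H^{1,0}(A_{v_0})$ decomposes into $L$-isotypic lines $\omega_i$, one per $\sigma_i$, as in the description of $X_L$ via $\HH^6$. The element $\sqrt{-d}$ acts on $\omega_i$ by $\pm i\sqrt d$. To read off the sign, write $\alpha=a+b\sqrt{-d}$. Its action on $\omega_i$ is the analytic representation of $\alpha$, namely multiplication by the scalar $\sigma_i(a)+\epsilon_i\sigma_i(b)\,i\sqrt d$, where $\epsilon_i$ is the eigenvalue sign. Separately, this scalar is the derivative at $z_0$ of the $i$-th Möbius transformation fixing the $i$-th coordinate of $v_0\in\HH^6$. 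For an elliptic element of $\GL_2^+(\R)$ fixing a point of $\HH$, the imaginary part of this multiplier, relative to the orientation of $\HH$, is governed by the sign of the lower-left entry $\sigma_i(\gamma_{21})$. Matching this with $\sigma_i(b)$ gives $\epsilon_i=\mathrm{sgn}\,\sigma_i(b)$ for a fixed normalisation. Condition (b) then says $\sqrt{-d}$ acts with eigenvalue $+i\sqrt d$ on three of the $\omega_i$ and $-i\sqrt d$ on the other three. That is exactly the Weil signature $(3,3)$, with $I^\pm=\{i:\pm\sigma_i(b)>0\}$. By Definition~\ref{def:WK} and Proposition~\ref{prop:E}(ii), $v_0$ lies in the component of $\W_K$ indexed by $(I^+,I^-)$. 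Since $v_0\in V$ by construction, $v_0\in V\cap\W_K$.

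\textbf{Step 3.} The converse only asserts the CM property, which is Theorem~\ref{thm:F}. The statement that such points are expected to arise from Hecke fixed points is heuristic, so I would only remark on it. A CM point with field $M$ has endomorphisms in $\OO_M$ of norm a totally split prime $\mathfrak l$. By Chebotarev such primes exist, and they satisfy $\ell\equiv1\pmod{42}$ when $M\subset\Q(\zeta_{42})$. These endomorphisms realise $v_0$ as a $T_\mathfrak{l}$-fixed point.

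\textbf{Main obstacle.} The sign bookkeeping in Step 2 is the delicate part. One must check that the orientation conventions for the twisted action $(\sigma_1,\dots,\sigma_6)$ on $\HH^6$ turn $\mathrm{sgn}\,\sigma_i(b)$ into the eigenvalue of $\sqrt{-d}$ on $H^{1,0}$, rather than on $H^{0,1}$, uniformly in $i$. A global flip would swap $I^+$ and $I^-$, which is harmless since both have size three. A place-dependent flip would not be harmless. I would handle this by computing the multiplier of the fixed-point matrix explicitly in each embedding.
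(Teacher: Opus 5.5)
Your overall route is the paper's own: Proposition~\ref{prop:hecke-cm} gives the CM property, hypothesis (a) gives $K\subset\End^0(A_{v_0})$, the signs of $\sigma_i(b)$ are aligned with the CM type, and Theorem~\ref{thm:F} gives the converse. But Step~2 has a genuine gap, at the sentence ``matching this with $\sigma_i(b)$ gives $\epsilon_i=\mathrm{sgn}\,\sigma_i(b)$.''

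Carry out the computation you propose. Let $\gamma=\bigl(\begin{smallmatrix}\gamma_{11}&\gamma_{12}\\ \gamma_{21}&\gamma_{22}\end{smallmatrix}\bigr)$ be the matrix of $\alpha$ fixing $w=\widetilde f_0(z_0)\in\HH^6$. Then $\sigma_i(\gamma)\binom{w_i}{1}=\lambda_i\binom{w_i}{1}$ with $\lambda_i=\sigma_i(\gamma_{21})w_i+\sigma_i(\gamma_{22})$. This automorphy factor $\lambda_i$ is the scalar by which $\alpha$ acts on $\omega_i$; it is not the derivative $\sigma_i(\det\gamma)/\lambda_i^2$ of the M\"obius map. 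Since $\mathrm{Im}\,w_i>0$, one has $\mathrm{sgn}\,\mathrm{Im}\,\lambda_i=\mathrm{sgn}\,\sigma_i(\gamma_{21})$. On the other hand, $\lambda_i=\sigma_i(a)+\epsilon_i i\sqrt d\,\sigma_i(b)$ gives $\mathrm{sgn}\,\mathrm{Im}\,\lambda_i=\epsilon_i\,\mathrm{sgn}\,\sigma_i(b)$. So the matching yields, up to one global convention sign,
\[
  \epsilon_i=\mathrm{sgn}\,\sigma_i(b)\cdot\mathrm{sgn}\,\sigma_i(\gamma_{21}).
\]
This is exactly the place-dependent flip you feared, and nothing in (a) or (b) makes $\gamma_{21}$ totally positive or totally negative.

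In fact $b$ drops out altogether. Writing $\gamma=a+bJ$, with $J\in M_2(L)$ the matrix of $\eta(\sqrt{-d})$, gives $\gamma_{21}=bJ_{21}$. Hence $\epsilon_i=\mathrm{sgn}\,\sigma_i(J_{21})$, which is attached to the point $v_0$ and not to $\alpha$. The sign vector of $b$, by contrast, is not an invariant of $v_0$. The isomorphism $\psi$ is determined only up to polarized $\OO_L$-linear automorphisms. Replacing $\alpha$ by $\zeta\alpha$, for a root of unity $\zeta\in M$ lying in $\End(A_{v_0})$ (e.g.\ $\zeta=\zeta_{42}$ when $d\in\{3,7\}$ and $\End(A_{v_0})=\OO_M$), rotates each $\phi_i^+(\alpha)$ by the $i$-dependent angle $\arg\phi_i^+(\zeta)$. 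This in general changes the sign vector non-uniformly, while $v_0$, $T_\mathfrak{l}$ and $\Phi$ stay fixed. So no choice of normalisation can turn (b) into the Weil signature. What your multiplier argument actually proves is the statement with (b) replaced by: $(\mathrm{sgn}\,\sigma_i(b\gamma_{21}))_{i=1}^6$ has exactly three positive entries.

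The paper does not close this step either. Its only justification is the assertion in \S\ref{ss:cmtype-constraint} that ``$\sigma_i(b)>0$ is compatible with $i\in I^+$''. Remark~\ref{rem:q3-gap} and (O2) themselves treat the sign vector as merely necessary, with the Weil signature requiring independent verification from the period point.

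There are also two smaller issues in Step~1.
\begin{itemize}
\item $N_{M/L}(\alpha)=\det\gamma$ is a totally positive generator of $\mathfrak l$, not $\ell$ up to a unit, since $\ell\OO_L=\mathfrak l_1\cdots\mathfrak l_6$. So read (a) directly as $L(\alpha)=M$ rather than through $c^2-4\ell$.
\item Condition (ii) of Definition~\ref{def:weil} needs a line. Because $\alpha^\dagger\alpha\in L$, the Rosati involution preserves $M=L(\alpha)$, so it acts there as complex conjugation.
\end{itemize}
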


Condition (a) is \emph{not automatic}: the Hecke construction gives
$\alpha\notin L$, but the discriminant $c^2-4\ell$ could lie in
$-e\cdot(L^\times)^2$ for any squarefree $e\neq d$.
Condition (b) is the Weil signature condition.

\begin{propintro}[Constant-sign obstruction]\label{prop:K}
If $b\in\Q$ (equivalently, if $\sigma_k(b)=b$ for all $k$), then
the sign pattern $\mathbf{s}=(\mathrm{sgn}\,\sigma_1(b),\ldots,\mathrm{sgn}\,\sigma_6(b))$
is constant $(+\cdots+)$ or $(-\cdots-)$, violating the Weil condition
$|I^+|=3$. Weil-compatible Hecke endomorphisms therefore require
$b\in\OO_L\setminus\Q$.
\end{propintro}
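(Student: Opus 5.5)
The argument is short: if $b$ is rational, every embedding of $L$ fixes it, so all six signs agree. I would carry it out in three steps.

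\emph{Step 1: the sign vector is constant.} Write $\alpha=a+b\sqrt{-d}$ with $a,b\in L$; this decomposition is unique because $M=L(\sqrt{-d})$ has degree $2$ over $L$. Assume $b\in\Q$. Each real embedding $\sigma_k\colon L\hookrightarrow\R$ is a field homomorphism, so it restricts to the identity on the prime field $\Q$. Hence $\sigma_k(b)=b$ for $k=1,\ldots,6$. Conversely, if $\sigma_k(b)=b$ for all $k$, then $b$ is fixed by $\Gal(L/\Q)$ (the $\sigma_k$ are its elements composed with one fixed embedding), so $b\in\Q$. This gives the stated equivalence, and it shows $\mathrm{sgn}\,\sigma_k(b)=\mathrm{sgn}\,b$ for every $k$. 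So $\mathbf{s}$ is $(+\cdots+)$ or $(-\cdots-)$.

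\emph{Step 2: the Weil condition fails.} We need $b\neq0$ to speak of signs. If $b=0$ then $\alpha=a\in L$, which contradicts condition (a) of Proposition~\ref{prop:J}: $\alpha$ must generate $M$, and the Hecke construction gives $\alpha\notin L$. So $b\in\Q^\times$. Then $|I^+|\in\{0,6\}$, which contradicts condition (b) and the requirement $|I^+|=|I^-|=3$ in Proposition~\ref{prop:E}(ii).

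\emph{Step 3: integrality.} It remains to see that a Weil-compatible $b$ lies in $\OO_L$, not merely in $L$. I would get this from the integrality of $\alpha$. As a Hecke endomorphism, $\alpha$ is integral over $\Z$, with $\tr_{M/L}(\alpha)=c=2a$ and norm $\ell$. Then $b\sqrt{-d}=\alpha-a$, with $a=c/2$, so
\[
-d\,b^2=(\alpha-a)^2 .
\]
This lies in $\tfrac14\OO_L$, and up to bounded denominators (dividing $2$ and $d$) this places $b$ in $\OO_L$. If the Hecke setup already fixes $\alpha$ in an order of the form $\OO_L[\sqrt{-d}]$, then $b\in\OO_L$ is immediate and this step is trivial.

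\emph{Main obstacle.} I expect only this denominator bookkeeping to need care. The essential content, $b\notin\Q$, needs nothing beyond the fact that field embeddings fix $\Q$. If the $\OO_L$-integrality cannot be pinned down cleanly, I would state the conclusion as $b\in L\setminus\Q$, with $b\in\OO_L$ after clearing the bounded denominators.
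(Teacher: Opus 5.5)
Your Steps 1 and 2 are the paper's argument. The paper justifies this proposition only through Examples~\ref{ex:d3-rational} and~\ref{ex:d7-rational}: a rational $b$ is fixed by every $\sigma_k$, so $|I^+|\in\{0,6\}$. There, $b\neq 0$ follows from $\alpha\notin L$ (Proposition~\ref{prop:hecke-cm}). That part of your proposal is complete, including the equivalence $b\in\Q\iff\sigma_k(b)=b$ for all $k$.

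Step 3 should be dropped rather than patched, because integrality of $\alpha$ alone does not place $b$ in $\OO_L$. Your identity $-db^2=(c^2-4\ell)/4$ gives $(2db)^2=-d(c^2-4\ell)\in\OO_L$. Hence it yields only $2db\in\OO_L$. These denominators really occur, since $\OO_M$ is in general strictly larger than $\OO_L[\sqrt{-d}]$. For example, take $d=3$ and $\zeta_3=(-1+\sqrt{-3})/2$. The element $\alpha=\zeta_3(4+3\sqrt{-3})=(-13+\sqrt{-3})/2\in\OO_M$ is integral with $N_{M/L}(\alpha)=(169+3)/4=43$, yet $b=1/2\notin\OO_L$.

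The paper never proves $b\in\OO_L$ either. It is built into the ansatz $\alpha=a+b\sqrt{-d}$ used to set up the search (O1), and the ``therefore'' in the statement only deduces the exclusion of $\Q$. So your fallback is the right formulation. What is actually proved is $b\in L\setminus\Q$, more precisely $b\in\tfrac{1}{2d}\OO_L\setminus\Q$. If you want $b\in\OO_L$, impose it as a normalization; it is not a consequence of the Hecke construction.
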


The sign reversal under $\alpha\mapsto\overline{\alpha}$ has a geometric
interpretation: it exchanges the two Weil-compatible CM types
$\Phi_{I^+,I^-}$ and $\Phi_{I^-,I^+}$ corresponding to the same
pair of index sets with their roles interchanged. These two types
differ in which half of $H^{1,0}(A_{v_0})$ carries the
$+i\sqrt{d}$-eigenvalue of the $K$-action. In particular, both give
points on the same component of $\W_K$, and both yield the same
unoriented CM type.

\begin{thmintro}[Reduction to finite computation]\label{thm:L}
For $\ell=43$ (the smallest prime $\equiv1\pmod{42}$), the question
reduces to solving at most
\[
(\ell+1)\cdot2^6 = 44\cdot64 = 2816
\]
algebraic equations in $z_0\in\HH$, each degree $\le2$ in $f_i(z_0)$.
Each equation has either no solution or finitely many CM solutions
(Wolfart's theorem, \cite{Wolfart88}). The computation is finite and explicit.
\end{thmintro}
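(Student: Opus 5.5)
The plan is to organize the search for points of $V\cap\W_K$ through Proposition~\ref{prop:J}, and then count the resulting equations. First I fix $\ell=43$, the smallest prime $\equiv1\pmod{42}$. It splits completely in $L=\Q(\cos\pi/21)$, because $L\subset\Q(\zeta_{42})$ and $43\equiv1\pmod{42}$. Since $h_L=1$, each prime $\mathfrak l\mid 43$ of $\OO_L$ is principal, say $\mathfrak l=(\lambda)$ with $\lambda$ totally positive after multiplying by a unit. The Hecke correspondence $T_{\mathfrak l}$ on $X_L$ is then given on $\HH^6$ by the $N(\mathfrak l)+1=\ell+1=44$ matrices $g_j\in M_2(\OO_L)$ of determinant $\lambda$, coset representatives of $\SL_2(\OO_L)\backslash\{\det=\lambda\}$. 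These are $\begin{pmatrix}\lambda&0\\0&1\end{pmatrix}$ and $\begin{pmatrix}1&k\\0&\lambda\end{pmatrix}$ for $k$ running over $\OO_L/\mathfrak l\cong\F_{43}$. A point $v_0=\widetilde f_0(z_0)$ is a fixed point of $T_{\mathfrak l}$ precisely when, for some $j$, some $\gamma\in\SL_2(\OO_L)$, and each $i$, we have $\sigma_i(\gamma g_j)\cdot f_i(z_0)=f_i(z_0)$.

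Second, I would absorb $\gamma$ by working modulo the unit group. The composite $\gamma g_j=\begin{pmatrix}a&b\\c&e\end{pmatrix}$ is again a matrix of determinant $\lambda$. Its fixed-point condition in the $i$-th coordinate is the quadratic equation
\[
\sigma_i(c)f_i(z_0)^2+\sigma_i(e-a)f_i(z_0)-\sigma_i(b)=0,
\]
which has degree $\le2$ in $f_i(z_0)$. The discriminant of this quadratic is $\sigma_i(\mathrm{tr}^2-4\lambda)$. Each coordinate of $\HH^6$ must choose one of the two roots, namely the root lying in the upper half plane. Recording which conjugate root is chosen, equivalently the sign $\mathrm{sgn}\,\sigma_i(b)$ of the imaginary part in $\alpha=a+b\sqrt{-d}$, gives a vector in $\{\pm\}^6$. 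This contributes the factor $2^6$, for a total of $(\ell+1)\cdot2^6=2816$ systems.

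I expect a reduction to finitely many matrices up to conjugation: fixing the trace amounts to choosing an order in $M$, and condition (a) of Proposition~\ref{prop:J} pins down $c=\tr(\alpha)$ with $c^2-4\ell=-d\lambda^2$. By Proposition~\ref{prop:K}, only non-constant sign vectors matter, and Proposition~\ref{prop:J}(b) further cuts to vectors with exactly three positive entries. The count $2816$ is thus an upper bound.

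Third, I would establish finiteness of each equation's solution set. By Theorem~\ref{thm:A} and the Cohen--Wolfart construction, the $f_i$ are nonconstant holomorphic maps $\HH\to\HH$, each a ratio of solutions of $E(\tfrac{19}{42},\tfrac37;\tfrac{13}{14})$. A solution $z_0$ forces $\widetilde f_0(z_0)$ to be a CM point, since it is fixed by a non-scalar Hecke endomorphism, so $M\subset\End^0$. By Wolfart's theorem \cite{Wolfart88}, CM points correspond to algebraic values of the uniformizer. Theorem~\ref{thm:F} then shows such points of $V$ lying in $\W_K$ are finite, and more simply the set of CM points on $V$ with a fixed CM field is finite by André--Oort, as in the proof of Theorem~\ref{thm:F}.

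The main obstacle is completeness. The claim "the question reduces to" means every point of $V\cap\W_K$ arises from a $T_{\mathfrak l}$-fixed point with $\ell=43$, and Proposition~\ref{prop:J} only states this as expected. To establish it, I would use the fact that at a CM point with $\End^0=M$, the element $\mathfrak l$ splits in $M$, since $43$ splits completely in $\Q(\zeta_{42})$ and (for $d\in\{3,7\}$) in $M$. One then needs an element of $\OO_M$ of relative norm $\lambda$, i.e.\ principality of a prime above $\mathfrak l$ in $\OO_M$ up to units. This requires a class-number input for $M$, which I would try first to obtain from the known class number of $\Q(\zeta_{42})$, which is $1$. For general $K$ this is the weak point, and I would state the reduction conditionally there.
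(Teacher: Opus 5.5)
\textbf{Verdict: there is a genuine gap, and it is the same gap as in the paper's own argument.}

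\medskip
\noindent\textbf{The coset count does not give $2816$ equations.} Your outline follows the paper's route: Steps~1--5 of \S\ref{sub:computation} together with \S\ref{ss:finite-computation}. That is, a generator of $\mathfrak l$ from $h_L=1$, the $44$ coset representatives, coordinatewise fixed-point quadratics, and a factor $2^6$. Your representatives are in fact the correct ones for $\SL_2(\OO_L)\backslash\{\det=\lambda\}$. The gap is your second step. You cannot ``absorb $\gamma$ modulo the unit group'': $\gamma$ runs over the infinite group $\SL_2(\OO_L)$. Each product $\gamma g_j=\bigl(\begin{smallmatrix}p&q\\ r&s\end{smallmatrix}\bigr)$ gives its own system $\sigma_i(r)w_i^2+\sigma_i(s-p)w_i-\sigma_i(q)=0$, so one coset yields infinitely many systems, not one.

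What is genuinely finite is the set of $\SL_2(\OO_L)$-conjugacy classes of totally elliptic $\alpha$ with $\det\alpha=\lambda$. The traces lie in the bounded set $\{c\in\OO_L:\sigma_i(c)^2<4\sigma_i(\lambda)\ \forall i\}$, and each irreducible polynomial $X^2-cX+\lambda$ carries finitely many classes (ideal classes of orders containing $\OO_L[\alpha]$). Replacing the lift $w$ by $\delta w$ conjugates $\alpha$, but conjugation does not preserve the coset label $j$. So this finite set is not indexed by pairs $(j,\mathbf s)$, and there is no reason for its size to be $44\cdot 64$.

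\medskip
\noindent\textbf{The factor $2^6$ is not a free choice.} The fixed points of an elliptic $\sigma_i(\alpha)$ are $\bigl(\sigma_i(p-s)\pm i\sqrt{\sigma_i(4\lambda-c^2)}\bigr)/2\sigma_i(r)$. The one in $\HH$ has $\pm=\mathrm{sgn}\,\sigma_i(r)$. The eigenvalue $\sigma_i(r)w_i+\sigma_i(s)$ on that eigenvector has imaginary part of sign $\mathrm{sgn}\,\sigma_i(r)$. Hence, writing $\alpha=a+b\sqrt{-d}$, the sign vector that records the CM type is that of $(\sigma_i(br))_i$, up to a global convention. It is an output of $\alpha$, not one of $64$ independent options.

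\medskip
\noindent\textbf{Condition (a) as you wrote it.} With $\det\alpha=\lambda$, condition~(a) must read $c^2-4\lambda\in-d(L^\times)^2$. The relation $\alpha^\dagger\alpha=\ell$ behind Proposition~\ref{prop:J} would force $\deg\alpha=\ell^{6}$, contradicting $\deg\phi=\ell$. Also, this condition does not pin down $c$. The paper's Steps~3--5 make exactly the same leap from cosets to equations, so this is a gap you share with the source rather than a deviation from it. Neither argument establishes the bound $2816$.

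\medskip
\noindent\textbf{Finiteness.} Your argument is more direct than the paper's appeal to Wolfart and the Faltings-height bound: solutions are CM by Proposition~\ref{prop:hecke-cm}, and $V$ has finitely many CM points by Theorem~\ref{thm:finite}. Better still, since $f_1=\mathrm{id}$, the map $\widetilde f_0$ is injective. So the system attached to one fixed $\alpha$ has at most one solution in $\HH$, namely the fixed point of $\sigma_1(\alpha)$, and no transcendence input is needed.

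\medskip
\noindent\textbf{Completeness.} You rightly note that ``reduces to'' needs every point of $V\cap\W_K$ to be $T_{\mathfrak l}$-fixed for some $\mathfrak l\mid 43$. The paper never proves this; Proposition~\ref{prop:J} only says ``expected''. But $h(\Q(\zeta_{42}))=1$ does not settle it, for two reasons.
\begin{enumerate}[label=(\roman*)]
\item $\End(A_{v_0})$ is only an order $\OO$ with $\OO_L\subset\OO\subseteq\OO_M$. Its Picard group can be nontrivial even when $h_M=1$.
\item $[A_{v_0}]$ is $T_{\mathfrak l}$-fixed only if a prime $\mathfrak P$ of $\OO$ above $\mathfrak l$ has a generator $\alpha$ with $\alpha\overline\alpha=\lambda$ exactly. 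That is, $(\mathfrak P,\lambda)$ must be trivial in Shimura's polarized class group. This also involves totally positive units of $\OO_L$ modulo norms of units of $\OO$, which a class-number-one statement for $M$ does not control.
\end{enumerate}
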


The prime $43$ achieves a simultaneous optimality that the
hypergeometric structure makes precise: it is the smallest prime for
which $\F_{43}^\times$ contains all $42$nd roots of unity
(since $|\F_{43}^\times|=42$), so the six Galois conjugates of
$t=2\cos(\pi/21)$ are all available as explicit elements of $\F_{43}$
without any field extension. This means the $2816$ fixed-point
equations can be pre-screened over $\F_{43}$ to eliminate the vast
majority before any high-precision numerical work in $\HH$.

The CM field $M=KL$ controls the geometry of the intersection.

\begin{propintro}[CM types of $M$]\label{prop:cmtypes-intro}
Let $d\in\{3,7\}$ so $M=\Q(\zeta_{42})$.
\begin{enumerate}[label=(\roman*)]
\item $\Gal(M/\Q)\cong\Z/2\times\Z/2\times\Z/3$.
\item There are $64$ CM types of $M$, forming $32$ conjugate pairs
      $\{\Phi,\overline{\Phi}\}$; all satisfy $\Phi\neq\overline{\Phi}$
      (Proposition~\ref{prop:cmtypes}).
\item Exactly $\binom{6}{3}=20$ oriented CM types are Weil-compatible
      (signature $(3,3)$ for $K$), forming $10$ conjugate pairs.
\item These $20$ oriented CM types are in natural bijection
      with the $20$ components of $\W_K$
      (Proposition~\ref{prop:E}(ii)), both indexed by
      sign-assignments $\{1,\ldots,6\}=I^+\sqcup I^-$, $|I^\pm|=3$.
\end{enumerate}
\end{propintro}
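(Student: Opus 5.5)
The plan is to make everything explicit through the structure $M=KL$ with $K\cap L=\Q$. This reduces (i) to a computation with $(\Z/42\Z)^\times$, and (ii)--(iv) to counting sign choices over the six real places of $L$.

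\textbf{Step 1: identify $M$ and its Galois group.} First I check that $M=\Q(\zeta_{42})$ for $d\in\{3,7\}$. Since $2\cos(\pi/21)=\zeta_{42}+\zeta_{42}^{-1}$, we have $L=\Q(\zeta_{42})^+$, the maximal totally real subfield, which has degree $\varphi(42)/2=6$. For $K$ we use $\sqrt{-3}\in\Q(\zeta_3)\subset\Q(\zeta_{42})$ and $\sqrt{-7}\in\Q(\zeta_7)\subset\Q(\zeta_{42})$, the latter by the quadratic Gauss sum, since $-7\equiv1\pmod 4$. So $K\subset\Q(\zeta_{42})$, and $K\cap L=\Q$ because $K$ is not real. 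Degrees then give $KL=\Q(\zeta_{42})$. Hence $\Gal(M/\Q)\cong(\Z/42\Z)^\times\cong(\Z/2)^\times\times(\Z/3)^\times\times(\Z/7)^\times\cong\Z/2\times\Z/6\cong\Z/2\times\Z/2\times\Z/3$, which proves (i).

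\textbf{Step 2: describe the embeddings and count CM types.} Because $M=K\otimes_\Q L$, each of the $12$ complex embeddings of $M$ is a pair $(\sigma_i,\epsilon)$. Here $\sigma_i$ is one of the six real embeddings of $L$, and $\epsilon\in\{\pm\}$ records whether $\sqrt{-d}\mapsto +i\sqrt d$ or $-i\sqrt d$. Complex conjugation $\rho$ acts by $(\sigma_i,\epsilon)\mapsto(\sigma_i,-\epsilon)$.

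A CM type is a choice of one element from each of the six $\rho$-orbits, so it is a function $\epsilon:\{1,\dots,6\}\to\{\pm\}$. This gives $2^6=64$ types. The conjugate type $\overline\Phi=\rho\circ\Phi$ corresponds to $-\epsilon$. Since $\Phi\cap\overline\Phi=\emptyset$ and $\Phi$ is nonempty, $\Phi\neq\overline\Phi$ always, so the $64$ types fall into $32$ pairs. This proves (ii).

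\textbf{Step 3: the Weil-compatible types.} For an abelian sixfold with CM by $(M,\Phi)$, the $K$-action on $H^{1,0}$ has eigenvalue $+i\sqrt d$ with multiplicity $|\epsilon^{-1}(+)|$ and eigenvalue $-i\sqrt d$ with multiplicity $|\epsilon^{-1}(-)|$. This is because $H^{1,0}=\bigoplus_{\varphi\in\Phi}\C_\varphi$, and on $\C_\varphi$ the field $K$ acts through $\varphi|_K$.

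Weil signature $(3,3)$ therefore means $|I^+|=|I^-|=3$ with $I^\pm=\epsilon^{-1}(\pm)$, which gives $\binom63=20$ types. The involution $\epsilon\mapsto-\epsilon$ preserves this set, swapping $I^+$ and $I^-$, and has no fixed points. So the $20$ types form $10$ conjugate pairs, which proves (iii).

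For (iv), the map $\Phi\mapsto(I^+,I^-)$ is tautologically a bijection onto the sign-assignments indexing the components in Proposition~\ref{prop:E}(ii). What has to be checked is that a CM point of type $\Phi$ actually lies on the component labelled by the same $(I^+,I^-)$.

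\textbf{Main obstacle.} The only real issue is a matching of conventions. The labelling of components of $\W_K$ in Definition~\ref{def:WK} must use the same indexing of the real embeddings $\sigma_i$ of $L$, and the same choice of $\sqrt{-d}\mapsto +i\sqrt d$, as the labelling above. I would handle this by recomputing the eigenspace decomposition $H^{1,0}(A)=\bigoplus_i H^{1,0}_{\sigma_i}$ under the $\OO_L$-action. Each summand is one-dimensional, and on it $K$ acts by $\pm i\sqrt d$; the resulting sign is exactly $\epsilon(i)$. This decomposition is equivariant for $\Gal(L/\Q)$, so the bijection is also compatible with the orbit decomposition $6,2,6,6$ of Proposition~\ref{prop:E}(ii).
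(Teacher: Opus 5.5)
Parts (i)--(iii) are correct and match the paper's argument. Part (i) is Proposition~\ref{prop:M}: you compute $(\Z/42\Z)^\times$ directly, while the paper uses $\Gal(L/\Q)\times\Gal(K/\Q)\cong\Z/6\times\Z/2$, which works for every $d$. The difference is immaterial. Parts (ii)--(iii) are the same count of one choice from each pair $\{\phi_i^+,\phi_i^-\}$ as in Proposition~\ref{prop:cmtypes}.

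For (iv) you also follow the paper, via Remark~\ref{rem:cm-field} together with the imported Proposition~\ref{prop:E}(ii). However, the real difficulty there is not a matching of conventions, and this step has a genuine gap, inherited from the paper.

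\emph{Labels do not separate the loci.} Checking that a CM point of type $\Phi_{I^+,I^-}$ lies on the locus labelled $(I^+,I^-)$ does not give a bijection, because the same point also lies on the locus labelled $(I^-,I^+)$. Indeed, if $\eta$ is a Weil structure with pattern $(I^+,I^-)$, then $\overline\eta$, defined by $\overline\eta(\sqrt{-d})=-\eta(\sqrt{-d})$, satisfies (i)--(iii) of Definition~\ref{def:weil} with pattern $(I^-,I^+)$. Hence $\W_K^{I^+,I^-}=\W_K^{I^-,I^+}$ as subsets of $X_L$, and the $20$ labels give at most $10$ distinct loci. The paragraph after Proposition~\ref{prop:K} says as much.

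\emph{The loci are not components.} The proof of Proposition~\ref{prop:cm} never uses $v_0\in V$. So every point of $\W_K$ is already a CM point, and these loci are countable sets of CM points rather than $3$-dimensional irreducible components.

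\emph{What you actually prove.} Your argument, like the paper's, establishes only a bijection of labels. The $20$ oriented Weil-compatible types correspond to the $20$ sign-assignments, equivalently to pairs $(A,\eta)$. The distinct loci in $X_L$ are labelled by the $10$ unoriented types. The literal assertion about $20$ components of $\W_K$ cannot be derived, because Proposition~\ref{prop:E}(ii) does not hold as stated.
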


For $\alpha=a+b\sqrt{-d}\in M$, $a,b\in L$, $b\neq0$, the sign vector
$\mathbf{s}=(\mathrm{sgn}\,\sigma_1(b),\ldots,\mathrm{sgn}\,\sigma_6(b))$
satisfies: (i) $\mathbf{s}$ and $-\mathbf{s}$ give the same unoriented
CM type; (ii) Weil condition $|I^+|=3$ restricts $\mathbf{s}$ to one of
$\binom{6}{3}=20$ patterns; (iii) $\mathbf{s}$ is not determined by the
norm equation alone; conjugation $\alpha\mapsto\overline{\alpha}$ reverses all signs.

\medskip
\noindent\textit{Open problems.} The full program for proving (or disproving) non-emptiness of
$V\cap\W_K$ consists of three explicit open problems,
formulated precisely in \S\ref{ss:examples-cmtype}:

\begin{itemize}[leftmargin=1.8em, itemsep=3pt]
  \item[\textbf{(O1)}]
  \emph{Execute the $\ell=43$ computation.}
  Enumerate all solutions $\alpha = a+b\sqrt{-d}\in\OO_M$ to the
  system of $2816$ fixed-point equations with $a^2+db^2=43$ and
  $\mathbf{s}(\alpha)$ Weil-compatible.
  \item[\textbf{(O2)}]
  \emph{Verify the Weil signature.}
  For any CM point $z_0$ produced by (O1), compute the CM type of
  $A_{\widetilde{f}_0(z_0)}$ from the period point and confirm
  conditions (a)--(b) of Proposition~\ref{prop:J}.
  \item[\textbf{(O3)}]
  \emph{Prove algebraicity of the Weil classes.}
  Even granting (O1) and (O2), prove that $W_K(A_{v_0})$ is algebraic,
  overcoming the three obstructions of Theorem~\ref{thm:I}.
\end{itemize}

\smallskip
The logical structure of the three problems is serial: (O1) is a
finite computation requiring no new theory; (O2) requires the classical
theory of complex multiplication as in \cite{ShimuraCM}; and (O3)
requires genuinely new Hodge-theoretic methods. Any mathematician
interested in the Hodge conjecture for abelian sixfolds is invited
to attempt (O1) — a complete and self-contained algorithm is given in
\S\ref{sub:computation} — as a first step toward a problem that is
currently out of reach of all known methods.

% ----------------------------------------------------------
\subsection*{Organisation of the paper}
% ----------------------------------------------------------

The paper comprises five sections.

\medskip
\noindent\textbf{Section~\ref{sec:mcm}.}
We identify $\Delta_0(14,21,42)$ as the unique non-arithmetic
cocompact Fuchsian group whose derived subgroup embeds integrally in
$\SL_2(\OO_L)$ (Theorems~\ref{thm:mcm1}--\ref{thm:mcm2}), construct
the modular embedding $\widetilde{f}_0:\HH\to\HH^6$ via
Cohen--Wolfart~\cite{CW90}, and obtain the compact geodesic curve
$V\hookrightarrow X_L$ (Theorem~\ref{thm:mcm9}).  The section
culminates in the computation of the generic Mumford--Tate group
(Corollary~\ref{cor:mt}) via McMullen's Zariski-density criterion
(Theorem~\ref{thm:density}).

\medskip
\noindent\textbf{Section~\ref{sec:weil}.}
We define $\W_K\subset X_L$, prove it is smooth of codimension $3$
at non-CM points, classify its $20$ irreducible components into
four $\Gal(L/\Q)$-orbits of sizes $6,2,6,6$
(Proposition~\ref{prop:codim}), and establish the super-atypicality
$\dim V + \dim\W_K - \dim X_L = -2$.

\medskip
\noindent\textbf{Section~\ref{sec:finite}.}
The central result is Theorem~\ref{thm:finite}: every point of
$V\cap\W_K$ is a CM point, so the intersection is finite.
Two independent proofs are given: one via Tsimerman's
André--Oort theorem~\cite{Tsimerman}, and one via Wolfart and
Ax--Schanuel~\cite{BCFN}.  Explicit height bounds are derived
(Corollary~\ref{cor:isogeny}).

\medskip
\noindent\textbf{Section~\ref{sec:cm-type}.}
We analyse the CM arithmetic of the abelian sixfolds over
$V\cap\W_K$.  Of the $64$ CM types of $M=L(\sqrt{-d})$, exactly
$20$ are simultaneously compatible with $\OO_L$-real multiplication
and the Weil eigenspace condition (Proposition~\ref{prop:cmtypes}).
We then prove Theorem~\ref{thm:I}: three structurally independent
reasons obstruct all existing algebraicity methods.

\medskip
\noindent\textbf{Section~\ref{sec:program}.}
We reduce non-emptiness to the $2816$ explicit equations of
Theorem~\ref{thm:L}. The hypergeometric
structure (\S\ref{ss:hypergeometric}) permits pre-screening over
$\F_{43}$ before any high-precision computation in $\HH$, and the
CM type constraint (\S\ref{ss:cmtype-constraint}) specifies the norm
and sign-vector conditions a Hecke endomorphism must satisfy for the
corresponding point to lie in $\W_K$. The section closes with a
precise formulation of the three open problems O1--O3
(\S\ref{ss:examples-cmtype}) and a discussion of their logical
dependences.

% ----------------------------------------------------------
\subsection*{Notation and conventions}
% ----------------------------------------------------------

Throughout the paper $\Q$, $\R$, $\C$, $\Z$ carry their standard
meanings; $\F_\ell$ denotes the field with $\ell$ elements and
$\overline{\Q}$ a fixed algebraic closure of $\Q$. All abelian
varieties are principally polarized and, unless otherwise stated,
defined over $\C$; we set $\End^0(A):=\End(A)\otimes_{\Z}\Q$.

The central totally real field is $L=\Q(\cos\tfrac{\pi}{21})=
\Q(\zeta_{42})^+$, of degree $6$ over $\Q$ with cyclic Galois group
$\Gal(L/\Q)\cong\Z/6\Z$, ring of integers $\OO_L=\Z[t]$ where
$t=2\cos(\tfrac{\pi}{21})$, and class number $h_L=1$. We fix once and for all the six real embeddings
$\sigma_1,\ldots,\sigma_6$ of $L$, ordered by
$\sigma_k(t)=2\cos(\tfrac{k\pi}{21})$ for
$k\in\{1,5,11,13,17,19\}$, with $\sigma_1=\mathrm{id}$.
We fix a squarefree integer $d>0$ and set $K:=\Q(\sqrt{-d})$. The
compositum $M:=KL$ is a CM field of degree $12$ over $\Q$ with
totally real subfield $L$; for $d\in\{3,7\}$ one has
$M=\Q(\zeta_{42})$. Each real embedding $\sigma_i:L\hookrightarrow\R$
extends to a conjugate pair of complex embeddings
$\phi_i^{\pm}:M\to\C$, determined by
$\phi_i^{\pm}(\sqrt{-d})=\pm i\sqrt{d}$. A \emph{CM type} for $M$
is a subset $\Phi\subset\Hom(M,\C)$ of cardinality $6$ satisfying
$\Phi\sqcup\overline{\Phi}=\Hom(M,\C)$; it is called
\emph{Weil-compatible} if $\Phi$ contains exactly three embeddings
of the form $\phi_i^+$.

The \emph{Hilbert modular variety} $X_L:=\HH^6/\SL_2(\OO_L)$,
where $\SL_2(\OO_L)$ acts on $\HH^6$ via
$(\sigma_1,\ldots,\sigma_6)$, is a quasi-projective variety of
dimension $6$ that coarsely parametrizes principally polarized
abelian sixfolds with real multiplication by $\OO_L$. We write
$A_v$ for the fiber over $v\in X_L$ and $\mathcal{A}_g$ for the
moduli space of principally polarized abelian $g$-folds. The \emph{Mumford--Tate group} $\MT(A)$ is the smallest $\Q$-algebraic subgroup of $\GL(H^1(A,\Q))$ whose base change to $\C$ contains the
image of the Hodge cocharacter $h\colon\mathbb{G}_{m,\C}\to\GL(H^1(A,\Q))_\C$. For each
sign-assignment $\{1,\ldots,6\}=I^+\sqcup I^-$ with $|I^\pm|=3$,
we denote by $\W_K^{I^+,I^-}\subset X_L$ the corresponding
irreducible component of the Weil locus
and write $\W_K:=\bigcup_{I^+,I^-}\W_K^{I^+,I^-}$ for the full
locus.

The letter $\ell$ always denotes a rational prime satisfying
$\ell\equiv 1\pmod{42}$, the smallest being $\ell=43$;
$\mathfrak{l}\subset\OO_L$ denotes a prime ideal above $\ell$ of
norm $N(\mathfrak{l})=\ell$, and $T_{\mathfrak{l}}$ the associated
Hecke correspondence on $X_L$. For $\ell=43$, the element
$\pi_1:=t^4-t^2+t-3\in\OO_L$ is an explicit generator of such an
ideal, with $N_{L/\Q}(\pi_1)=43$. We write
$\Delta(p,q,r)$ for the hyperbolic triangle group with angles
$\pi/p,\pi/q,\pi/r$, and $\Delta_0:=\Delta_0(14,21,42)$ for its
orientation-preserving index-two subgroup; the standard generator
is $B=\bigl(\begin{smallmatrix}0&-1\\1&t\end{smallmatrix}\bigr)$,
which satisfies $\tr(B)=t$ and $B^{21}=-I$. Finally, $h_F(A)$
denotes the Faltings height of an abelian variety $A$ over a number
field, normalized as in~\cite{Faltings83}.

\medskip
\noindent\textit{Cross-references.}
The theorems and propositions stated in the introduction are
introductory restatements of results proved in the body of the paper.
Specifically: 
Theorems~\ref{thm:A}, \ref{thm:B}
$\leftrightarrow$ Theorems~\ref{thm:mcm1}, \ref{thm:mcm9};
Theorem~\ref{thm:C} $\leftrightarrow$ Corollary~\ref{cor:mt};
Proposition~\ref{prop:E} $\leftrightarrow$ Proposition~\ref{prop:codim};
Theorem~\ref{thm:F} $\leftrightarrow$ Theorem~\ref{thm:finite};
Corollary~\ref{cor:G} $\leftrightarrow$ Corollary~\ref{cor:isogeny}; 
Theorem~\ref{thm:I} $\leftrightarrow$ Observation~\ref{obs:markman-fails}
and \S\ref{ss:hc-avzero};
Propositions~\ref{prop:J}, \ref{prop:K} and Theorem~\ref{thm:L}
$\leftrightarrow$ Propositions~\ref{prop:hecke-cm},~\ref{prop:cmtypes}
and  \S\ref{sub:computation} (Corollary~\ref{cor:isogeny-program});
Proposition~\ref{prop:cmtypes-intro} $\leftrightarrow$ Proposition~\ref{prop:cmtypes}.

\addtocontents{toc}{\protect\setcounter{tocdepth}{2}}

%==================================================================
\section{McMullen's curve}\label{sec:mcm}
%==================================================================

The compact Kobayashi geodesic curve $V\subset X_L$ at the heart of
this paper is produced by a modular embedding of the Fuchsian group
$\Delta_0(14,21,42)$, the unique index-two subgroup of the hyperbolic
triangle group $\Delta(14,21,42)$.
The group $\Delta(14,21,42)$ belongs to McMullen's \emph{Hilbert series}
\cite{McM23}: eleven cocompact triangle groups whose invariant quaternion
algebra $B_0$ splits at every real place of the invariant trace field $L$,
forcing a matrix model over the full ring of integers $\OO_L$.
This arithmetic rigidity --- absent for the $76$ arithmetic triangle groups
classified by Takeuchi \cite[Thm.~3]{Tak77}, for which $B_0$ splits at exactly one
real place --- is what allows $\Delta_0$ to embed into $\SL_2(\OO_L)$ rather
than merely into $\SL_2(\OO_L[1/N])$ for some integer $N$.
We use the modular embedding to construct the universal family
$\mathcal{A}\to V$ and to compute its generic Mumford--Tate group
(Corollary~\ref{cor:mt}): for a generic fiber $A_v$,
\[
  \MT(A_v) \;=\; \Res_{L/\Q}\SL_2.
\]
This computation, which depends on the Zariski density criterion of
McMullen \cite[Thm.~5.1]{McM23} (Theorem~\ref{thm:density}),
is the key input for the Weil locus analysis of \S\ref{sec:weil}
and the finiteness theorem of \S\ref{sec:finite}.

\subsection{Triangle groups and matrix models}

Let $\Delta(p,q,r)$ be a cocompact hyperbolic triangle group, defined
as the orientation-preserving subgroup of index two in the reflection
group of a hyperbolic triangle with angles $(\pi/p,\pi/q,\pi/r)$,
where $1/p + 1/q + 1/r < 1$.
We regard $\Delta(p,q,r)$ as a subgroup of $\SL_2(\R)$ via the standard
double cover $\SL_2(\R)\to\mathrm{PSL}_2(\R)$; with this convention it
is generated by elements $a,b,c$ satisfying
$a^p = b^q = c^r = abc = -I$,
where $-I$ is the non-trivial central element of $\SL_2(\R)$.
%% S1-C1: trace field renamed K_\Delta throughout; here K_\Delta = Q(tr Delta)
%% S1-C3: cos(\pi/p) written with \tfrac to remove ambiguity
The \emph{trace field} of $\Delta(p,q,r)$ is
\[
  K_\Delta \;=\; \Q(\tr\,\Delta) \;=\; \Q\!\left(\cos\tfrac{\pi}{p},\,\cos\tfrac{\pi}{q},\,\cos\tfrac{\pi}{r}\right).
\]
Every cocompact triangle group has a matrix model (i.e., is conjugate
to a subgroup of $\SL_2(L')$) over some number field $L'\supset K_\Delta$;
one can always arrange $[L':K_\Delta]= 2$ \cite[\S1]{McM23}.
It is unusual, however, for $\Delta(p,q,r)$ to admit a model over
$K_\Delta$ itself \cite[\S1]{McM23}.

The algebraic mechanism controlling this is the \emph{invariant
quaternion algebra}.
Set $\Delta_0 = \langle g^2 : g\in\Delta\rangle$.
The invariant trace field $L = \Q(\tr\,\Delta_0)$ and the
invariant quaternion algebra $B_0$, defined as the
$L$-subalgebra of $M_2(\R)$ spanned by $\Delta_0$, depend
only on the commensurability class of $\Delta$ \cite[p.~4]{McM23}.
By a theorem of Takeuchi, $B_0$ is a quaternion algebra over $L$.
%% S1-C5: SL(B_0) undefined; corrected to SL_1(B_0)
If $B_0$ is split (i.e., $B_0\cong M_2(L)$ as
$L$-algebras), then $\SL_1(B_0)\cong\SL_2(L)$
and the natural inclusion $\Delta_0\subset\SL_1(B_0)$ provides a matrix
model of $\Delta_0$ over $L$.
The question of when such a model exists over $K_\Delta$ (as opposed to
a degree-two extension) is thus governed by whether $B_0$ splits at
all or just one of the infinite places of $L$.

McMullen \cite{McM23} identifies eleven cocompact triangle groups,
called the \emph{Hilbert series}, for which $B_0$ splits at all infinite
places of $L$ and a model over $L$ exists.
In the arithmetic case, by contrast, $B_0$ splits at just one infinite
place of $L$; the two families are therefore disjoint, and together
they provide complementary extremes of the possible splitting behavior
\cite[p.~8]{McM23}.
(The overlap when $\deg L = 1$ is trivial.)
Takeuchi \cite[Thm.~3]{Tak77} has classified all arithmetic Fuchsian triangle
groups; there are exactly $76$ such groups.
The group $\Delta(14,21,42)$ does not appear in Takeuchi's list, as
one verifies directly from Theorem~3 of \cite{Tak77}. For further results on semi-arithmetic Fuchsian groups, see \cite{BelCos}.

For the Hilbert series, McMullen establishes the following.

\begin{theorem}[{\cite[Thm.~1.1]{McM23}}]\label{thm:mcm1}
Every group $\Delta(p,q,r)$ in the Hilbert series admits a matrix model
over the ring of integers $\OO_{K_\Delta}$ in its trace field $K_\Delta$.
\end{theorem}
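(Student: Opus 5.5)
The proof runs in two stages. First I produce a matrix model over the trace field $K_\Delta$ itself. Then I conjugate it into the ring of integers $\OO_{K_\Delta}$. I work group by group through McMullen's finite list of eleven Hilbert-series triples $(p,q,r)$. The general mechanism is the same for all eleven, and only finitely many arithmetic checks depend on the specific triple.

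\textbf{Step 1: a model over $K_\Delta$.}
Since $B_0$ splits at every infinite place of $L=\Q(\tr\Delta_0)$, I first determine its finite ramification. The ramification set has even cardinality. I would show that $B_0$ is unramified at every finite prime: its finite ramification is contained in the primes dividing the discriminant of the trace form of $\OO_L[\Delta_0]$, and one checks that this discriminant is a unit up to squares. It follows that $B_0\cong M_2(L)$.

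To pass from $\Delta_0$ to all of $\Delta$, I use the standard normal form
\[
a=\begin{pmatrix}0&-1\\1&2\cos(\pi/p)\end{pmatrix},
\]
with $b$ chosen as a triangular-type matrix in $\SL_2(K_\Delta)$ satisfying $\tr b=2\cos(\pi/q)$ and $\tr(ab)=-2\cos(\pi/r)$. Solving for the entries of $b$ requires a square root of an explicit element $\delta\in K_\Delta$. This is a Fricke-type expression in the three cosines. So the existence of a model over $K_\Delta$ is equivalent to $\delta$ being a norm, or a square up to conjugation, in $K_\Delta$. That condition is in turn equivalent to the splitting of the quaternion algebra $\bigl(\tfrac{\delta,\,\cdot}{K_\Delta}\bigr)$. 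Splitting at all infinite places is exactly the Hilbert-series hypothesis, and the finite places were handled above.

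\textbf{Step 2: integrality.}
Once $\Delta\subset\SL_2(K_\Delta)$, the $\OO_{K_\Delta}$-module $\Lambda$ spanned by $\Delta\cdot e_1$ is a finitely generated $\Delta$-stable lattice in $K_\Delta^2$. It is finitely generated because $\OO_{K_\Delta}[\Delta]$ is an order: all traces are algebraic integers, since they are polynomials in $2\cos(\pi/p)$, $2\cos(\pi/q)$, $2\cos(\pi/r)$.

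The lattice $\Lambda$ is projective of rank $2$, so $\Lambda\cong\OO_{K_\Delta}\oplus\mathfrak a$ for some ideal $\mathfrak a$. If $\mathfrak a$ is principal, choosing a basis of $\Lambda$ conjugates $\Delta$ into $\SL_2(\OO_{K_\Delta})$. In general I would choose $\Lambda$ within its $\Delta$-stable homothety and isogeny class so that its Steinitz class is trivial. Alternatively, I would check that the class number of $K_\Delta$ is $1$ for each of the eleven fields; for instance, $h_L=1$ for $L=\Q(\zeta_{42})^+$.

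\textbf{Main obstacle.}
I expect the hardest part to be the finite-place analysis in Step 1: showing that the relevant quaternion algebra, or the Steinitz class in Step 2, imposes no obstruction at primes dividing $pqr$. Here I would first try explicit computation with McMullen's normal form, exhibiting integral matrices directly for each triple. This bypasses both local questions at once, at the cost of an eleven-case verification.
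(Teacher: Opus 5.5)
There is no proof in the paper to compare against, since the statement is quoted from \cite[Thm.~1.1]{McM23}. So the proposal has to stand on its own, and Step~1 does not.

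Write $x=2\cos(\pi/p)$, $y=2\cos(\pi/q)$, $z=-2\cos(\pi/r)$ for the traces of $a$, $b$, $ab$, and set $\beta=x^2+y^2+z^2-xyz-4=\tr[a,b]-2$.

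\textbf{The claim that $B_0$ is split is false.} You assert that for every group in the Hilbert series $B_0$ is unramified at all finite primes, so $B_0\cong M_2(L)$. That is false in general, and the paper itself says so. It singles out $\Delta(14,21,42)$ as the only known triangle group with split $B_0$, and this is exactly why Theorem~\ref{thm:mcm2} (models for all finite-index subgroups, including those with trace field $L$) is special to it. Splitting at every real place only forces the finite ramification set of $B_0$ to have even size. For example, take $\Delta(6,10,15)$, where $x=\sqrt3$, and $L=\Q(\zeta_{15})^+$. One checks that $B_0\cong\bigl(-3,\;x^2y^2\beta\bigr)_L$ splits at all four real places of $L$, yet it is ramified at the primes of $L$ over $3$ and $5$.

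Your test is not a valid criterion either. Ramified primes do divide the discriminant of every order. But the discriminant of $\OO_L[\Delta_0]$ is in general divisible by primes over $2$ and over divisors of $pqr$. At those primes, ``a unit up to squares'' decides nothing; a local Hilbert-symbol computation is required.

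\textbf{Step~1 then splits the wrong algebra.} The phrase ``the finite places were handled above'' in the second half of Step~1 refers to the wrong algebra over the wrong field. What must split is $B=K_\Delta[\Delta]\cong B_0\otimes_L K_\Delta\cong\bigl(x^2-4,\;\beta\bigr)_{K_\Delta}$. The missing idea is that passing to the trace field kills the finite ramification of $B_0$. Precisely, $B$ splits at $\mathfrak{P}\mid\mathfrak{p}$ if and only if $B_0$ splits at $\mathfrak{p}$ or the local degree $[(K_\Delta)_{\mathfrak{P}}:L_{\mathfrak{p}}]$ is even. So you must show that every finite prime ramified in $B_0$ is non-split in $K_\Delta/L$. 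In the example above, $K_\Delta=L(\sqrt3)$ and the primes over $3$ and $5$ are inert. Concretely, this means evaluating $(x^2-4,\beta)_{\mathfrak{P}}$ at the primes of $K_\Delta$ dividing $2(x^2-4)\beta$, for each of the eleven triples. Alternatively, actually carry out your fallback of writing down integral generators.

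\textbf{Step~2 is essentially sound.} $\OO_{K_\Delta}[\Delta]$ is an order, and a $\Delta$-stable lattice $\Lambda$ exists. Replacing $\Lambda$ by $\mathfrak{b}\Lambda$ changes its Steinitz class by $[\mathfrak{b}]^2$. Hence a free stable lattice exists whenever the class group of $K_\Delta$ has odd order. The field that matters, however, is the trace field. For $\Delta(14,21,42)$ it is $K_\Delta=\Q(\cos\pi/42)$, of degree $12$, so $h_L=1$ for $L=\Q(\zeta_{42})^+$ is not the input you need.
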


The most remarkable group in the Hilbert series is $\Delta(14,21,42)$,
which is the only known triangle group with a \emph{split} invariant
quaternion algebra $B_0$.
For this group a stronger integrality statement holds:

\begin{theorem}[{\cite[Thm.~1.2]{McM23}}]\label{thm:mcm2}
Every subgroup $\Gamma$ of finite index in $\Delta(14,21,42)$ admits
a matrix model over the ring of integers in $\Q(\tr\,\Gamma)$.
\end{theorem}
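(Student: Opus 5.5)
Since this is McMullen's theorem, I will follow his strategy: first reduce integrality of a matrix model to a local question at each finite prime, then settle that local question using the explicit structure of $\Delta(14,21,42)$. Let $\Gamma\subset\Delta:=\Delta(14,21,42)$ have finite index and set $F:=\Q(\tr\Gamma)$. A matrix model over $\OO_F$ amounts to two things. First, the $F$-algebra $F[\Gamma]$ must be split, so that $\Gamma\subset\SL_2(F)$ up to conjugacy. Second, there must be a $\Gamma$-stable $\OO_F$-lattice $\Lambda\subset F^2$ that is free of rank two.

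For the first point, I would use that $\Delta_0(14,21,42)\subset\Gamma$ up to finite index, and that $B_0=L[\Delta_0]$ is split; this is the special feature of $(14,21,42)$ recorded before Theorem~\ref{thm:mcm2}. Since the invariant trace field $L$ is contained in $F\subset K_\Delta=L$ (here $L=K_\Delta$, as the trace field of the triangle group already equals $\Q(\cos\pi/21)$), we get $F=L$. Moreover, $F[\Gamma]\supset B_0\cong M_2(L)$ is then split. In particular, $\Gamma\hookrightarrow\SL_2(L)$, and traces of elements of $\Gamma$ lie in $\OO_L$ because the generators have traces $2\cos(\pi/p)$ etc., which are algebraic integers.

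For the lattice, integral traces imply that the order $\OO_L[\Gamma]\subset M_2(L)$ is contained in some maximal order. Concretely, $\OO_L[\Gamma]$ is a finitely generated $\OO_L$-module of integral elements, and it has full rank because $\Gamma$ is irreducible. Any maximal order of $M_2(L)$ has the form $\End_{\OO_L}(\Lambda)$ for a lattice $\Lambda$, and $\Lambda\cong\OO_L\oplus\mathfrak a$ with Steinitz class $[\mathfrak a]$. Since $h_L=1$ (recorded in the conventions), $\mathfrak a$ is principal. Hence $\Lambda$ is free, and conjugating a basis of $\Lambda$ to the standard basis puts $\Gamma$ in $\SL_2(\OO_L)$.

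The main obstacle is the step ``$\Gamma$ lies in $\SL_2$ of the trace field'' for arbitrary finite-index $\Gamma$. In general $\Q(\tr\Gamma)$ may be strictly larger than the invariant trace field, or $\Gamma$ may only embed in $\SL_2$ of a quadratic extension. To handle this, I would use that $\Delta(14,21,42)$ itself is already realized in $\SL_2(\OO_L)$ (Theorem~\ref{thm:mcm1}, together with $K_\Delta=L$), so every subgroup is too. The remaining check is $\Q(\tr\Gamma)=L$ for all finite-index $\Gamma$. I would do this by showing $\Q(\tr\Gamma)\supseteq\Q(\tr\Delta_0')$ for $\Delta_0'=\Gamma\cap\Delta_0$, which equals $L$ by commensurability invariance of the invariant trace field. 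Combined with $\tr\Gamma\subset\OO_L$, this gives $\OO_{\Q(\tr\Gamma)}=\OO_L$, and the model from the previous paragraph is the required one.
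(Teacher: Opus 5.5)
\textbf{There is a genuine gap.} Your proposal rests on the identification $K_\Delta=L$, and that is false. The trace field of $\Delta=\Delta(14,21,42)$ is $K_\Delta=\Q(\cos\tfrac{\pi}{14},\cos\tfrac{\pi}{21},\cos\tfrac{\pi}{42})=\Q(\cos\tfrac{\pi}{42})=\Q(\zeta_{84})^+$. This field has degree $12$ and is a \emph{quadratic extension} of $L=\Q(\cos\tfrac{\pi}{21})$, as the paper states just before the theorem. Concretely, the generator $c$ with $c^{42}=-I$ has trace $\pm2\cos(\pi/42)\notin L$. Three consequences follow:
\begin{enumerate}[label=(\roman*)]
\item $\Delta\not\subset\SL_2(\OO_L)$; Theorem~\ref{thm:mcm1} only gives a model over $\OO_{K_\Delta}$.
\item Your claim that all traces of $\Gamma$ lie in $\OO_L$ fails.
\item The conclusion $\Q(\tr\Gamma)=L$ for every finite-index $\Gamma$ is already wrong for $\Gamma=\Delta$.
\end{enumerate}
The theorem is really a dichotomy. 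Your argument omits the half where $\Gamma\not\subset\Delta_0$, for which the trace field is $K_\Delta$ and the model must be over $\OO_{K_\Delta}$. The route you propose for that case, embedding $\Delta$ in $\SL_2(\OO_L)$, is impossible.

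\textbf{What is needed.} You correctly note $L\subseteq\Q(\tr\Gamma)\subseteq K_\Delta$, and $[K_\Delta:L]=2$, so the trace field is either $L$ or $K_\Delta$. The paper, following McMullen, sorts the two cases using the generator of $\Gal(K_\Delta/L)$. This involution acts on the integral model of $\Delta$ by $a\mapsto -a$, $b\mapsto b$, $c\mapsto -c$, that is, $g\mapsto\chi(g)g$, where $\chi$ is the sign character with kernel $\Delta_0$. Hence traces of elements of $\Gamma\setminus\Delta_0$ are anti-invariant. When $\Gamma\setminus\Delta_0$ is nonempty it contains elements of nonzero trace, so $\Q(\tr\Gamma)=L$ exactly when $\Gamma\subset\Delta_0\subset\SL_2(\OO_L)$. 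Otherwise $\Q(\tr\Gamma)=K_\Delta$, and $\Gamma$ simply inherits the $\OO_{K_\Delta}$-model of $\Delta$ from Theorem~\ref{thm:mcm1}.

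\textbf{What survives.} Your maximal-order and Steinitz-class argument is a valid alternative in the case $\Q(\tr\Gamma)=L$. It needs only three inputs: $L[\Gamma]=L[\Gamma^{(2)}]\cong B_0$ is split, the traces are integral, and $h_L=1$. It does not even require knowing $\Gamma\subset\Delta_0$. But it cannot replace the case analysis: you must separately produce the model over $\OO_{K_\Delta}$ when $\Gamma\not\subset\Delta_0$.
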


The proof of Theorem~\ref{thm:mcm2} proceeds by showing that
$K_\Delta = \Q(\cos\pi/42)$ is a quadratic extension of $L = \Q(\cos\pi/21)$
with Galois group $G = \Gal(K_\Delta/L)\cong\Z/2$, and that the Galois-fixed
points in $\Delta$ are exactly $\Delta_0$ \cite[\S4]{McM23}.
Any subgroup $\Gamma$ of finite index in $\Delta$ therefore has trace
field either $K_\Delta$ or $L$; in the first case it inherits a model over
$\OO_{K_\Delta}$, and in the second case it lies in $\Delta_0$ and inherits a
model over $\OO_L$.

\subsection{The field $L$ and the Hilbert modular variety}
\label{subsec:field}

Set $L = \Q(\cos\pi/21)$.
Since $2\cos(\pi/21) = \zeta_{42} + \zeta_{42}^{-1}$ and
$\Q(\zeta_{42}+\zeta_{42}^{-1})$ is the maximal real subfield
$\Q(\zeta_{42})^+$ of $\Q(\zeta_{42})$, we have
$L = \Q(\zeta_{42})^+$.
Since $[\Q(\zeta_{42}):\Q]=\phi(42)=12$ and complex conjugation acts
non-trivially on $\Q(\zeta_{42})$, the degree $[L:\Q]=12/2=6$.
%% S1-C8: added cyclotomic calculation to justify Gal(L/Q) cong Z/6
The field $L$ is totally real with cyclic Galois group
$\Gal(L/\Q)\cong\Z/6\Z$; this follows from the identification
$\Gal(L/\Q)\cong(\Z/42\Z)^\times/\langle{-1}\rangle$
and the isomorphism $(\Z/42\Z)^\times\cong\Z/2\times\Z/6$
(see \cite[Table~2]{McM23}).
%% S1-C9: L = K_0 updated to just L (K_0 is now L by definition throughout)
Note that $L$ is the invariant trace field of $\Delta(14,21,42)$,
%% S1-C1: K -> K_\Delta for the trace field of Delta(14,21,42) itself
while the trace field of $\Delta(14,21,42)$ itself is
$K_\Delta = \Q(\cos\pi/42)$, a totally real field of degree $12$ and a
quadratic extension of $L$ \cite[Table~2]{McM23}.

%% S1-C10: added sign partition of sigma_k(t) and forward reference to Weil locus
The six real embeddings $\sigma_1,\ldots,\sigma_6$ of $L$ into $\R$
(with $\sigma_1=\mathrm{id}$) satisfy
$\sigma_k(t) = 2\cos(k\pi/21)$ for $k\in\{1,5,11,13,17,19\}$,
one representative from each pair $\{k,42-k\}$ with $\gcd(k,42)=1$,
$1\leq k\leq 20$.
(The six pairs are $\{1,41\}$, $\{5,37\}$, $\{11,31\}$, $\{13,29\}$,
$\{17,25\}$, $\{19,23\}$; we choose the smaller representative.)
The values $\sigma_k(t) = 2\cos(k\pi/21)$ are positive for
$k\in\{1,5\}$ (angles in $(0,\pi/2)$) and negative for
$k\in\{11,13,17,19\}$ (angles in $(\pi/2,\pi)$); this sign partition
is the data controlling the Weil locus in \S\ref{sec:weil}.

The \emph{Hilbert modular variety} associated to $L$ is
\[
  X_L \;=\; \HH^6 \,/\, \SL_2(\OO_L),
\]
where $\SL_2(\OO_L)$ acts on $\HH^6$ via
$(\sigma_1,\ldots,\sigma_6)$.
It is a quasi-projective variety of complex dimension $6$, coarsely
parametrizing principally polarized abelian sixfolds with real
multiplication by $\OO_L$.
We write $A_v$ for the fiber over $v\in X_L$.

\subsection{The quadrilateral subgroup $\Delta_0$}
\label{subsec:quadrilateral}

The unique index-two subgroup $\Delta_0 = \langle g^2:g\in\Delta\rangle$
of $\Delta = \Delta(14,21,42)$ has trace field $\Q(\tr\,\Delta_0) = L$
\cite[\S4]{McM23}.
Just as $\Delta$ has index two in the reflection group for the triangle
$T(14,21,42)$, the subgroup $\Delta_0$ has index two in the reflection
group for the symmetric quadrilateral $Q(7,21,21,21)$ formed by two
copies of $T$ \cite[\S4]{McM23}.
In terms of the generators $a,b,c$ of $\Delta$ in $\SL_2(\R)$,
following the sign convention of \cite[\S4]{McM23} in which
$a^{14}=b^{21}=c^{42}=abc=-I$, the generators of $\Delta_0$
are
\[
  (A,B,C,D) \;=\; (a^2,\,b,\,c^2,\,c^{-1}bc),
\]
satisfying\footnote{The sign $-I$ (rather than $+I$) reflects the orientation of the generators
as elements of $\SL_2(\R)$ rather than $\text{PSL}_2(\R)$: the monodromy
around each cusp of angle $\pi/n$ is an elliptic element of order $2n$
in $\SL_2(\R)$, so $B^{21} = -I$ (not $+I$) for the cusp of angle $\pi/21$.
Concretely, the generator
$B = \bigl(\begin{smallmatrix}0&-1\\1&t\end{smallmatrix}\bigr)$
with $t = 2\cos(\pi/21)$ satisfies $B^{21} = -I$
by a direct computation using the minimal polynomial of $t$.}
\[
  A^7 = B^{21} = C^{21} = D^{21}=-I, \quad ABCD = I,\]
with one generator for each vertex of $Q(7,21,21,21)$ \cite[\S4]{McM23}.
Explicit matrix generators over $\OO_L$ (setting $t=2\cos\pi/21$) are
given in \cite[\S4]{McM23}.

\subsection{The modular embedding and McMullen's curve}
\label{subsec:embedding}

We recall the construction of the modular embedding from
\cite[\S6]{McM23}, following the method of Cohen--Wolfart \cite{CW90}.

Let $\mathcal{A}\to X_L$ denote the universal principally polarized
abelian sixfold over a suitable level cover of $X_L$, with fiber $A_v$
over $v\in X_L$.
For each $i$, let $T_i\subset\HH$ be the hyperbolic triangle whose
vertices are the fixed points of $(a_i,b_i,c_i) = (\sigma_i(a),\sigma_i(b),\sigma_i(c))$.
Since $\Delta_0\subset\SL_2(\OO_L)$ by Theorem~\ref{thm:mcm2}, the
Cohen--Wolfart theorem \cite[Cor.~6.2]{McM23} provides a holomorphic
local isometry
\[
  \widetilde{f}_0\colon \HH \;\longrightarrow\; \HH^6,
  \qquad \widetilde{f}_0(z) = (f_1(z),\,f_2(z),\,\ldots,\,f_6(z)),
\]
equivariant for the $\Delta_0$-action: $f_i(g\cdot z) = \sigma_i(g)\cdot f_i(z)$
for all $g\in\Delta_0$, $z\in\HH$.
Here $f_1=\mathrm{id}$, and for $i\geq 2$ each $f_i$ maps into $\HH$
or $-\HH$ according to whether the orientation of $T_i$ agrees with
that of $T_1$; when $f_i$ maps into $-\HH$ one replaces $f_i$ by
$-\overline{f_i}$ to obtain a map into $\HH$ \cite[\S6]{McM23}.
Each $f_i$ is constructed as follows: by the Riemann mapping theorem
and Carath\'eodory's theorem, there is a unique conformal map
$T_1\to T_i$ that extends continuously to the boundary and maps the
three vertices of $T_1$ to the corresponding vertices of $T_i$;
this map is extended to all of $\HH$ (or $-\HH$, according as the orientation
of $T_i$ agrees or disagrees with that of $T_1$) by Schwarz reflection
through the sides of $T_1$ and $T_i$ \cite[\S6]{McM23}.

By Corollary~6.2 of \cite{McM23}, one passes to a subgroup $\Gamma$
of finite index in $\Delta_0$ to obtain a geodesic curve
$f\colon V = \HH/\Gamma \to X_L$.

\begin{theorem}[{\cite[Thm.~1.9]{McM23}}]\label{thm:mcm9}
There exists a compact Kobayashi geodesic curve
$f\colon V = \HH/\Gamma \to X_L$, with $\dim X_L = 6$, such that
$f(V)$ is not contained in any proper Shimura subvariety of $X_L$.
\end{theorem}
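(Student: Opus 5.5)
This statement is McMullen's \cite[Thm.~1.9]{McM23}, and we follow his construction. The first step is to obtain the equivariant holomorphic map $\widetilde{f}_0=(f_1,\dots,f_6)\colon\HH\to\HH^6$ of \S\ref{subsec:embedding}. Because $\Delta_0\subset\SL_2(\OO_L)$ by Theorem~\ref{thm:mcm2}, the Cohen--Wolfart construction applies.

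\textbf{Descent to a compact curve.} Choose a torsion-free normal subgroup $\Gamma\subset\Delta_0$ of finite index; Selberg's lemma provides one. Then $\widetilde{f}_0$ descends to a holomorphic map $f\colon V=\HH/\Gamma\to X_L$. The group $\Gamma$ is cocompact because $\Delta(14,21,42)$ is, so $V$ is a compact Riemann surface. Passing to a further finite-index subgroup if necessary, $f$ becomes an immersion that is generically injective onto its image.

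\textbf{Kobayashi geodesicity.} We have $f_1=\mathrm{id}$, and each $f_i$ is a holomorphic self-map of $\HH$. By Schwarz--Pick, each $f_i$ is distance non-increasing. The Kobayashi metric on $\HH^6$ is the maximum of the hyperbolic metrics on the factors. It follows that
\[
d_{\HH^6}\bigl(\widetilde f_0(z),\widetilde f_0(w)\bigr)=\max_i d_\HH\bigl(f_i(z),f_i(w)\bigr)=d_\HH(z,w),
\]
since the first coordinate realises the maximum. Hence $\widetilde f_0$ is a totally geodesic isometry for the Kobayashi metric, and this property descends to $V\to X_L$.

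\textbf{Not contained in a proper Shimura subvariety.} Suppose $f(V)\subset S$ with $S\subsetneq X_L$ a Shimura subvariety. Then the monodromy $\Gamma\subset\SL_2(\OO_L)$ would lie, up to finite index, in the $\Q$-points of the generic Mumford--Tate group $H\subsetneq\Res_{L/\Q}\SL_2$ of $S$. We rule this out with McMullen's Zariski-density criterion (Theorem~\ref{thm:density}): the six Galois conjugates $\sigma_i(\Gamma)$ are Zariski dense in $\prod_i\SL_2(\R)$. The input for that criterion is that no two embeddings $\sigma_i,\sigma_j$ are related by a conjugation on $\Gamma$. This follows because the trace functions $\sigma_i\circ\tr$ are pairwise distinct on $\Gamma$ and each $\sigma_i(\Gamma)$ is non-elementary. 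With these two facts, Goursat's lemma applied to the simple factors gives density. Once density holds, $H(\R)\supset\overline{\Gamma}^{Zar}=\Res_{L/\Q}\SL_2(\R)$, a contradiction.

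\textbf{Main obstacle.} The hard step is ruling out proper Shimura subvarieties, and in particular the diagonal-type subvarieties attached to a proper subfield $L'\subset L$ or to quaternion algebras. Distinct trace fields alone do not exclude these: we need density of $\Gamma$ in $\Res_{L/\Q}\SL_2$ itself, not merely irreducibility of each factor. We would first try McMullen's argument in \cite[\S5]{McM23}. It uses that $\Delta_0$ is non-arithmetic, which holds since $\Delta(14,21,42)$ is absent from Takeuchi's list, so $B_0$ is not ramified at five of the six infinite places. It also uses that the invariant trace field is all of $L$. Together these exclude every such subvariety.
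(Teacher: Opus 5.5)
Your proposal is correct and follows the same route as the paper, which defers to McMullen: the integral matrix model (Theorem~\ref{thm:mcm2}) lets the Cohen--Wolfart embedding descend to $X_L$, and the Zariski-density criterion (Theorem~\ref{thm:density}) excludes every proper Shimura subvariety. One correction: that criterion needs only that the invariant trace field is $L$ and that $\Gamma$ is not virtually solvable, and the invariant-trace-field hypothesis is exactly what makes your Goursat step work, since trace-distinctness of the embeddings is needed on every finite-index subgroup, not just on $\Gamma$. Non-arithmeticity plays no role, and neither does your inference from Takeuchi's list about the ramification of $B_0$ (which is split anyway, since $\Delta_0\subset\SL_2(\OO_L)$).
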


The proof uses Theorem~\ref{thm:mcm2} to construct $\widetilde{f}_0$ above and
Theorem~\ref{thm:density} below to rule out containment in any proper
Shimura subvariety \cite[\S6]{McM23}.
We note that Theorem~\ref{thm:mcm9} is in sharp contrast with the
situation in lower dimensions: when $\dim X_L = 2$, every non-Shimura
geodesic curve on $X_L$ is forced to have a cusp \cite[p.~7]{McM23}.

The $\OO_L$-action on $\mathrm{R}^1\pi_*\Q$ (where $\pi$ is the projection)
splits the local system into six rank-$2$ sub-local-systems, one for
each embedding $\sigma_i$.
The Gauss--Manin connection therefore decomposes as a direct sum of six
rank-$2$ connections \cite[\S6]{McM23}; this decomposition is used in
\S\ref{sec:weil} to analyze the Weil locus.

\subsection{Zariski density and the Mumford--Tate group}
\label{subsec:mt}

We now determine the generic Mumford--Tate group of the family
$\mathcal{A}|_V$.
The key input is the following criterion of McMullen, whose proof
occupies \S5 of \cite{McM23}.

\begin{theorem}[{\cite[Thm.~5.1]{McM23}}]\label{thm:density}
Let $\Gamma\subset\SL_2(L) = (\Res_{L/\Q}\SL_2)(\Q)$.
Then $\Gamma$ is Zariski dense in the $\Q$-algebraic group
$\Res_{L/\Q}\SL_2$ if and only if
\begin{enumerate}[label=\textup{(\arabic*)}]
\item the invariant trace field of $\Gamma$ is $L$, and
\item $\Gamma$ is not virtually solvable.
\end{enumerate}
\end{theorem}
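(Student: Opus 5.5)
The statement to prove is McMullen's Zariski-density criterion (Theorem~\ref{thm:density}): for $\Gamma\subset\SL_2(L)$, Zariski density in $G:=\Res_{L/\Q}\SL_2$ is equivalent to conditions (1) and (2). I would prove the two directions separately. The strategy is to compute the Zariski closure $H:=\overline{\Gamma}^{\mathrm{Zar}}\subset G$ after base change to $\overline{\Q}$. There $G_{\overline{\Q}}\cong\prod_{i=1}^6\SL_2$, and $\Gamma$ embeds diagonally via $g\mapsto(\sigma_1(g),\ldots,\sigma_6(g))$.

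\emph{Necessity.} First, suppose $\Gamma$ is virtually solvable. Then the identity component $H^\circ$ is solvable, while $G$ is semisimple of positive dimension, so $H\neq G$. Second, suppose the invariant trace field $L':=\Q(\tr\Gamma^{(2)})$ is a proper subfield of $L$. Pass to the finite-index subgroup $\Gamma^{(2)}$; since $G$ is connected, density of $\Gamma$ would imply density of $\Gamma^{(2)}$. The trace function $\tr$ on each factor then satisfies a nontrivial polynomial relation on $\Gamma^{(2)}$. Concretely, choose $\sigma_i\neq\sigma_j$ with $\sigma_i|_{L'}=\sigma_j|_{L'}$. Then $\tr\sigma_i(g)=\tr\sigma_j(g)$ for all $g\in\Gamma^{(2)}$. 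This equation cuts out a proper closed subvariety of $\SL_2\times\SL_2$, so $\Gamma^{(2)}$ is not dense, and hence neither is $\Gamma$.

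\emph{Sufficiency.} Assume (1) and (2), and replace $\Gamma$ by $\Gamma^{(2)}$, which is again not virtually solvable with trace field $L$. Let $p_i\colon H_{\overline{\Q}}\to\SL_2$ be the $i$-th projection.

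\textbf{Step 1.} Each $p_i(H^\circ)$ is all of $\SL_2$. The algebraic subgroups of $\SL_2$ are either virtually solvable or equal to $\SL_2$. The image $\sigma_i(\Gamma)$ is isomorphic to $\Gamma$, so it is not virtually solvable. Hence $p_i(H)$ is all of $\SL_2$, and so is $p_i(H^\circ)$.

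\textbf{Step 2.} Apply Goursat's lemma. Since $\SL_2$ is simple modulo its center, $H^\circ$ is a product, over the blocks of some partition of $\{1,\ldots,6\}$, of diagonally embedded copies of $\SL_2$. Within each block the coordinates are related by isomorphisms $\SL_2\to\SL_2$. Every automorphism of $\SL_2$ is inner, so these isomorphisms preserve traces. Consequently, for $i,j$ in the same block, $\tr\sigma_i(g)=\pm\tr\sigma_j(g)$ on a finite-index subgroup of $\Gamma$. The sign comes from the center; on squares the sign disappears.

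\textbf{Step 3.} The main obstacle is this step: deducing $\sigma_i=\sigma_j$ from a trace identity that holds only on a finite-index subgroup. I would first use that the invariant trace field, being a commensurability invariant (as recalled above), equals $L$ for that finite-index subgroup as well. The identity $\tr\sigma_i(g^2)=\tr\sigma_j(g^2)$ then gives $\sigma_i=\sigma_j$ on a set of generators of $L$, so $i=j$. Therefore all blocks are singletons, $H^\circ_{\overline{\Q}}=\prod\SL_2$, and $H=G$.

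The delicate points I would check carefully are two. One is that passing to $\Gamma^{(2)}$ and to finite-index subgroups preserves the invariant trace field; this is the standard commensurability invariance. The other is that the Goursat step correctly accounts for the central $\pm I$ ambiguity, which is exactly why the invariant trace field, rather than the trace field, appears in condition (1).
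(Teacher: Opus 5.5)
The paper does not prove this statement. It quotes it from \cite[Thm.~5.1]{McM23}, with only a footnote calling it an adaptation of a Margulis--Zimmer-type criterion, so there is no in-paper argument to compare yours against. Your proposal is a correct, self-contained proof along the standard route. You take the Zariski closure over $\overline{\Q}$, where $\Gamma$ sits diagonally in $\prod_{i=1}^6\SL_2$, apply Goursat, use that inner automorphisms preserve traces, and use that the traces of $\Gamma^{(2)}$ generate $L$. This has the merit of showing exactly where each hypothesis enters.

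Three points should be tightened.

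\emph{First, the necessity direction.} The theorem is stated for arbitrary $\Gamma\subset\SL_2(L)$, and $\Gamma^{(2)}$ has finite index only when, for example, $\Gamma$ is finitely generated. You can bypass $\Gamma^{(2)}$ entirely. For every $g\in\Gamma$ we have $\tr(g)^2=\tr(g^2)+2\in L'$. Hence the nonzero regular function $(x_1,\ldots,x_6)\mapsto\tr(x_i)^2-\tr(x_j)^2$ on $G_{\overline{\Q}}$ already vanishes on $\Gamma$.

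\emph{Second, the sign in Steps 2--3.} On $H^\circ$ the Goursat relations are exact: $p_j=c\,p_i\,c^{-1}$ with $c\in\GL_2(\overline{\Q})$. So on $\Gamma\cap H^\circ$ there is no sign at all. In your route the invariant trace field enters only because the trace field of the finite-index subgroup $\Gamma\cap H^\circ$ may shrink. That is why you need commensurability invariance of $\Q(\tr\Gamma^{(2)})$, which is standard for finitely generated non-elementary groups and so covers finite-index subgroups of $\Delta_0$.

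A cleaner variant avoids that input:
\begin{itemize}
\item Since $H$ normalizes $H^\circ$ and the centralizer of $\SL_2$ in $\GL_2$ is scalar, $\sigma_j(g)=\epsilon(g)\,c\,\sigma_i(g)\,c^{-1}$ for \emph{all} $g\in\Gamma$.
\item Here $\epsilon\colon\Gamma\to\{\pm1\}$ is a homomorphism, so it is trivial on $\Gamma^{(2)}$.
\item Hence $\sigma_i=\sigma_j$ on $\Q(\tr\Gamma^{(2)})=L$ directly from hypothesis (1).
\end{itemize}
This is the precise form of your remark that ``on squares the sign disappears.'' It also removes the need to replace $\Gamma$ by $\Gamma^{(2)}$ at the start of sufficiency.

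\emph{Third, the Goursat step.} The classification of connected subgroups of $\prod\SL_2$ surjecting onto every factor deserves a line of justification. For example, argue at the Lie algebra level by induction on the number of factors, using that $\mathfrak{sl}_2$ is simple and all its automorphisms are inner. It is standard, but it is the structural core of your argument.
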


\begin{remark}
More formally, Theorem~\ref{thm:density} asserts Zariski density in
the $\Q$-algebraic group $T = \Res_{L/\Q}\SL_2$, which satisfies
$T(\Q)=\SL_2(L)$ and $T(\R)\cong\SL_2(\R)^6$; see \cite[\S5]{McM23}.
\end{remark}

Both conditions of Theorem~\ref{thm:density} hold for every $\Gamma$
of finite index in $\Delta_0$:\footnote{Condition~(1), that the invariant trace field of $\Gamma$ is $L$,
is stable under passage to finite-index subgroups (the invariant trace
field depends only on the commensurability class).
Condition~(2), non-virtual-solvability, holds because $\Delta_0$ is
cocompact (hence non-elementary) and non-abelian; both properties
persist under finite-index passage.
The Zariski density criterion of Theorem~\ref{thm:density} is
McMullen's adaptation of the Margulis--Zimmer criterion for
superrigidity \cite[Thm.~5.1]{McM23}: it identifies exactly when
the holonomy of the Gauss--Manin connection is as large as the
RM structure permits, yielding the equality $\MT(A_v) = \Res_{L/\Q}\SL_2$
for generic fibers rather than merely the inclusion $\subset$.} the invariant trace field of $\Delta_0$
is $L$ by \cite[\S4]{McM23}, and $\Delta_0$ is not virtually solvable
as it is a cocompact Fuchsian group.

\begin{corollary}\label{cor:mt}
For $v$ outside a proper algebraic subvariety of $V$, the Mumford--Tate
group of $A_v$ is
\[
  \MT(A_v) \;=\; \Res_{L/\Q}\SL_2.
\]
In particular, $A_v$ carries no Hodge class beyond those generated by
the polarization and the $\OO_L$-multiplication.
\end{corollary}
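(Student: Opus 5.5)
The argument runs through André's theorem on the monodromy of variations of Hodge structure. Let $\mathbb{V}=R^1\pi_*\Q$ be the weight-one variation on $V$ coming from $\mathcal{A}|_V$, and let $\MT_{\mathrm{gen}}$ denote its generic Mumford--Tate group. By the standard Cattani--Deligne--Kaplan argument, outside a countable union of proper analytic, hence algebraic, subvarieties of $V$ one has $\MT(A_v)=\MT_{\mathrm{gen}}$. Since $V$ is a curve, each proper algebraic subvariety is a finite set. The Hodge-generic locus is therefore the complement of a countable set, and "outside a proper algebraic subvariety" in the statement should be read in this Hodge-generic sense. The goal is then to identify $\MT_{\mathrm{gen}}$.

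Upper bound. The $\OO_L$-action on $\mathcal{A}$ and the principal polarization are flat Hodge tensors on all of $V$. The group fixing them is $\Res_{L/\Q}\SL_2$, up to the central torus that acts on the symplectic form through scalars. Strictly speaking, the Mumford--Tate group of a weight-one structure contains the scalars $\mathbb{G}_m$, so the statement refers to the Hodge group (the derived, or special, Mumford--Tate group); I would state this convention explicitly. With it, $\Hg(A_v)\subseteq\Res_{L/\Q}\SL_2$ for every $v$.

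Lower bound. By André's theorem (Deligne's semisimplicity combined with the normality of monodromy), the identity component of the Zariski closure of the monodromy group of $\mathbb{V}$ is a normal subgroup of the derived generic Mumford--Tate group. The monodromy representation of $\mathbb{V}$ is the restriction to $\Gamma\subset\Delta_0\subset\SL_2(\OO_L)$ of the tautological action on $\OO_L^2\otimes\Q\cong\Q^{12}$; this is the decomposition into the six rank-$2$ pieces $\sigma_i$ noted after Theorem~\ref{thm:mcm9}. By Theorem~\ref{thm:density}, together with the verification preceding the corollary that $\Gamma$ has invariant trace field $L$ and is not virtually solvable, $\Gamma$ is Zariski dense in $\Res_{L/\Q}\SL_2$. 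Because this group is connected, the monodromy closure is all of $\Res_{L/\Q}\SL_2$. It is contained in $\Hg_{\mathrm{gen}}$, which is in turn contained in $\Res_{L/\Q}\SL_2$ by the upper bound, so equality holds.

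The main obstacle is to justify the monodromy-versus-Mumford--Tate comparison cleanly. André's theorem requires a polarizable variation over a smooth quasi-projective base; here $V$ is compact and $\mathcal{A}|_V$ is pulled back from a level cover, so this is satisfied. One must also check that passing to the finite-index $\Gamma$ and to the level cover does not change identity components. This is harmless because Theorem~\ref{thm:density} applies to every finite-index subgroup.

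The final clause, that there are no Hodge classes beyond the polarization and the $\OO_L$-multiplication, follows from Mumford's theorem. The Hodge classes on powers of $A_v$ are the $\MT(A_v)$-invariants, and by classical invariant theory for $\prod_i\SL_2$ acting through the six standard representations, these invariants are generated by the six symplectic pairings. Those pairings are exactly the classes arising from the polarization twisted by $\OO_L$, that is, divisor classes. This establishes Theorem~\ref{thm:C}.
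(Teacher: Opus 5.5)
Your route is the paper's route. The upper bound comes from the flat $\OO_L$-action and the polarization. The lower bound comes from McMullen's density criterion (Theorem~\ref{thm:density}) applied to the monodromy $\Gamma$. The last clause comes from Mumford's theorem plus invariant theory for $\prod_i\SL_2$.

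Where your wording departs from the paper's, you are right and the paper is not. First, the group equal to $\Res_{L/\Q}\SL_2$ is the Hodge group. With the definition in the Notation paragraph, $\MT(A_v)$ contains the weight homotheties, so generically $\MT(A_v)=\mathbb{G}_m\cdot\Res_{L/\Q}\SL_2$. Second, the paper's lower bound rests on the claim that monodromy lies in the Mumford--Tate group of \emph{every} fiber. That is false in general: at a CM fiber $\MT$ is a torus, which cannot contain a Zariski-dense subgroup of $\Res_{L/\Q}\SL_2$. The correct input is exactly the one you use, Andr\'e's theorem at a Hodge-generic point.

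Neither argument proves the corollary as literally stated. On a curve a proper algebraic subvariety is a finite set, but both arguments only show that the exceptional locus is a countable union of finite sets. The paper's sentence ``The union is thus a finite set of points'' is a non sequitur. Your reading, ``outside a countable set,'' is precisely what is established.

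Upgrading to finiteness needs input the paper does not have. Exceptional points with torus Hodge group are finitely many by Andr\'e--Oort (Theorem~\ref{thm:finite}). The remaining exceptional points are where $V$ meets the proper positive-dimensional special subvarieties of $X_L$. These are attached to quaternion algebras over the proper subfields of $L$, so they have dimension at most $3$. Finiteness of these atypical intersections for a compact curve is a Zilber--Pink-type statement that is not available.

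The very-general version you prove is all the paper later uses (Remarks~\ref{rem:not-hodge-locus} and~\ref{rem:bku-perspective}). The ``Zariski-generic'' in Theorem~\ref{thm:C} should be read the same way.
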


\begin{proof}
\emph{Upper bound.}
Since $H^1(A_v,\Q)$ is a free $L$-module of rank~$2$
\cite[\S9.2]{BirkenhakeLange} and $\varphi$ is
$L$-linear, the centralizer of the $\OO_L$-multiplication in
$\Sp(H^1(A_v,\Q),\varphi)$ is $\Res_{L/\Q}\SL_2$.
By \cite[\S1]{MZ99}, the Hodge group satisfies
\[
  \Hg(A_v)\subset\Sp_{\OO_L}(H^1(A_v,\Q),\varphi)
  = \Res_{L/\Q}\SL_2.
\]
\emph{Lower bound.}
By \cite[\S3]{Deligne82}, the monodromy group of any polarized
$\Z$-VHS is contained in the Mumford--Tate group of every fiber.
The monodromy representation $\rho\colon\pi_1(V,v)\to\Sp(H^1(A_v,\Q),\varphi)$
has image commensurable with $\Gamma$ (up to conjugacy), which lies in
$\Res_{L/\Q}\SL_2$ by the $\OO_L$-module structure on $H^1$.
By Theorem~\ref{thm:density}, $\Gamma$ is Zariski dense in
$\Res_{L/\Q}\SL_2$, so the monodromy group is also Zariski dense
in $\Res_{L/\Q}\SL_2$, and hence
$\Res_{L/\Q}\SL_2\subset\MT(A_v)$.

\emph{Conclusion.}
The two bounds together give $\MT(A_v) = \Res_{L/\Q}\SL_2$.
The locus in $V$ where the MT group is strictly smaller than
$\Res_{L/\Q}\SL_2$ is a countable union of Hodge loci; by the
Cattani--Deligne--Kaplan theorem \cite[Cor.~1.4]{CDK95} each Hodge locus is
algebraic, hence a finite set of closed points in the curve $V$.
The union is thus a finite set of points.
The second statement follows from the fact that
$\End_\Q(H^1(A_v,\Q))^{\Hg(A_v)} = \End^0(A_v)$ \cite[\S1]{MZ99}:
the only $\Res_{L/\Q}\SL_2$-invariant classes in
$\bigotimes H^1(A_v,\Q)$ are generated by the polarization class
$[\varphi]$ and the $\OO_L$-multiplication operators.
\end{proof}

\begin{remark}\label{rem:not-hodge-locus}
Corollary~\ref{cor:mt} implies that $V$ is not itself a Hodge locus
for any tensor beyond polarization and real multiplication.
If a generic fiber $A_v$ carried an additional Hodge tensor, it would
be fixed by $\MT(A_v)$, forcing $\MT(A_v)$ into a proper subgroup of
$\Res_{L/\Q}\SL_2$ and contradicting Corollary~\ref{cor:mt}.
This is the key input for all arguments in subsequent sections.
\end{remark}

%==================================================================
\section{The Weil locus}\label{sec:weil}
%==================================================================

Let $K=\Q(\sqrt{-d})$ be an imaginary quadratic field.
An abelian sixfold $A$ with $\OO_L$-multiplication is of \emph{Weil type for $K$}
if $K$ embeds in $\End^0(A)$, compatibly with the $\OO_L$-action and the Rosati
involution, with eigenvalue multiplicities $(3,3)$ on $H^{1,0}(A)$.
The Hodge--Weil space $W_K(A)\subset H^6(A,\Q)$ then consists entirely of
Hodge classes of type $(3,3)$ that are exceptional in the sense of \cite{MZ99}:
they lie outside the subalgebra generated by divisor classes.
The locus $\mathcal{W}_K\subset X_L$ of abelian sixfolds of Weil type for $K$
has $20$ irreducible components, each of codimension~$3$, indexed by the
$\binom{6}{3}=20$ sign-assignments $I^+\sqcup I^-=\{1,\ldots,6\}$ with
$|I^\pm|=3$ (Proposition~\ref{prop:codim}).
Since McMullen's curve $V\subset X_L$ has dimension $1$, the expected dimension of
$V\cap\mathcal{W}_K$ is $1+3-6=-2$; any non-empty intersection is therefore
\emph{super-atypical} — a rigid arithmetic phenomenon that the finiteness results of
\S\ref{sec:finite} make precise. For the intersection theory of cycles on $X_L$ more
broadly, see \cite{Cooper24}.

Throughout this section we fix an imaginary quadratic field
$K=\Q(\sqrt{-d})$, $d>0$ square-free.
Since $L=\Q(\cos\pi/21)$ is totally real, every embedding
$L\hookrightarrow\C$ lands in $\R$; any embedding of the
compositum $KL$ then restricts to a real embedding on $L$
and to a non-real embedding on $K$, so $KL$ has no real
places.
Hence $K\cap L=\Q$, and $KL$ is a totally imaginary quadratic
extension of the totally real field $L$---a CM field of
degree $[KL:\Q]=12$.

%------------------------------------------------------------------
\subsection{Abelian sixfolds of Weil type}\label{ss:weiltype}
%------------------------------------------------------------------

Let $A$ be a principally polarised abelian sixfold with
$\OO_L$-multiplication and $H=H^1(A,\Q)$ with Riemann form
$\varphi$.

\begin{definition}\label{def:weil}
We say $A$ is of \emph{Weil type for $K$} if there exists an
embedding $\eta\colon K\hookrightarrow\End^0(A)$ such that:
\begin{enumerate}[label=(\roman*),itemsep=2pt]
\item $\eta$ commutes with the $\OO_L$-action on $H$;
\item $\eta(\sqrt{-d})^{\dagger}=-\eta(\sqrt{-d})$ for the
      Rosati involution associated to $\varphi$; and
\item the $\C$-linear extension of $\eta(\sqrt{-d})$ to
      $H^{1,0}(A)$ has eigenvalues $+i\sqrt{d}$ and $-i\sqrt{d}$,
      each with multiplicity $3$ \cite[(1.9)]{MZ99}.
\end{enumerate}
\end{definition}

\begin{remark}
Since $K\cap L=\Q$, the $\OO_L$-action and $\eta(K)$ generate a
subalgebra of $\End^0(A)$ isomorphic to $KL$, a CM field of degree
$[KL:\Q]=12=2\dim A$.
Because $KL$ is a maximal commutative semisimple subalgebra of
$\End_\Q(H^1(A,\Q))$ (as $\dim_\Q KL = 2\dim A = \dim_\Q H^1(A,\Q)$), we have
$\End^0(A) = KL$ (with center $KL$, a CM field), so $A$ is of Albert
type~IV by the Albert classification \cite{Albert}.
\end{remark}

Set $r := 2g/[K:\Q] = 12/2 = 6$, where $g = \dim A = 6$.
The \emph{Hodge--Weil space} of $A$ is
\[
   W_K(A)\;:=\;\mathrm{Im}\!\left(\textstyle\bigwedge^r_K H
     \xrightarrow{\;\sim\;} \textstyle\bigwedge^r_\Q H
     \hookrightarrow H^r(A,\Q)\right),
\]
where the first map is the natural inclusion of the $K$-exterior power into
the $\Q$-exterior power, and the second is the canonical identification
$\bigwedge^r_\Q H^1(A,\Q)\cong H^r(A,\Q)$ \cite[(1.9)]{MZ99}.
The space $W_K(A)$ is a rank-$1$ $K$-submodule of $H^6(A,\Q)$,
hence $2$-dimensional over $\Q$.
By \cite[(1.9)]{MZ99}, when condition~(iii) of
Definition~\ref{def:weil} holds, $W_K(A)$ consists entirely
of Hodge classes; these form a $2$-dimensional $\Q$-subspace
of $H^6(A,\Q)$ whose complexification lies in $H^{3,3}(A)$,
i.e.\ $W_K(A)\otimes_\Q\C\subset H^{3,3}(A,\C)$.
These classes are \emph{exceptional} in the sense of
\cite[(1.4)]{MZ99}--- that is, they are Hodge-invariant but lie outside the
subalgebra of $H^\bullet(A,\Q)$ generated by divisor classes---as
proved in \cite{MZ99}.
Their algebraicity is not known in general \cite[(1.9)]{MZ99}.

%------------------------------------------------------------------
\subsection{The Weil locus and its codimension}\label{ss:weil-locus}
%------------------------------------------------------------------

\begin{definition}\label{def:WK}
The \emph{Weil locus}
\[
   \W_K\;\subset\; X_L
\]
is the locus of points $[A]\in X_L$ for which $A$ is of Weil
type for $K$ in the sense of Definition~\ref{def:weil}
(in particular, $\eta$ is required to commute with the
$\OO_L$-action, as in~(i)).
By \cite[Cor.\,1.4]{CDK95}, $\W_K$ is a countable union of
closed irreducible algebraic subvarieties of $X_L$.
\end{definition}

\begin{remark}\label{rem:union}
One may consider
$\W:=\bigcup_{K}\W_K$
over all imaginary quadratic $K$ admitting a compatible embedding
into $\End^0$ as in Definition~\ref{def:weil}.
This is a countable union of Hodge loci; its density and
accumulation behaviour are more delicate and are not addressed
here.
All arguments below are carried out for the fixed $K$.
\end{remark}

\begin{proposition}\label{prop:codim}
Each irreducible component of\/ $\W_K$ is a smooth subvariety of
codimension~$3$ in $X_L$ at every non-CM point.
The components are indexed by sign-assignments
$I^+\sqcup I^- = \{1,\ldots,6\}$ with $|I^\pm|=3$;
there are $\tbinom{6}{3}=20$ of them, forming four orbits under
the action of $\Gal(L/\Q)\cong\Z/6\Z$ on the set of six embeddings
$\{\sigma_1,\ldots,\sigma_6\}$, of sizes $6,\,2,\,6,\,6$.
\end{proposition}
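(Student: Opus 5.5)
The plan has two independent parts: a local period-domain computation for smoothness and codimension, and a combinatorial orbit count for the Galois action.

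\emph{Local structure.} Work on the universal cover $\HH^6$. The $\OO_L$-action splits $H\otimes_\Q\R=\bigoplus_{i=1}^6 H_i$ into real planes, one per embedding $\sigma_i$, and a point $\tau=(\tau_1,\ldots,\tau_6)$ puts a complex structure on each $H_i$. An embedding $\eta$ as in Definition~\ref{def:weil}(i) commutes with $L$, so $J:=\eta(\sqrt{-d})/\sqrt d$ preserves every $H_i$ and acts on it as a complex structure $J_i$. On $H_i\otimes\C$ the $\pm i$-eigenlines of $J_i$ must be the two Hodge lines. Which line is $H^{1,0}_i$ defines the sign $\epsilon_i=\pm$, and condition (iii) of Definition~\ref{def:weil} says exactly that $I^+:=\{i:\epsilon_i=+\}$ has size $3$. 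So each sign-assignment labels a piece $\W_K^{I^+,I^-}$. Smoothness at a non-CM point follows once each piece is shown, locally on $\HH^6$, to be the fixed-point set of a reductive subgroup of $\SL_2(\R)^6$, namely the centralizer of $J$ intersected with $\Res_{L/\Q}\SL_2$. Such a fixed-point set is a totally geodesic complex submanifold, hence smooth, and its image in $X_L$ is a finite union of such germs. The codimension is then the codimension of the $J$-anti-invariant part of the tangent space
\[
T_\tau\HH^6=\textstyle\bigoplus_i\Hom(H^{1,0}_i,H^{0,1}_i),
\]
and I would compute it factor by factor, in the spirit of the Moonen--Zarhin description \cite[(1.9)]{MZ99}.

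\emph{Expected obstacle.} I expect the main difficulty to lie in the tangent computation. In a single factor, $J_i$ acts on $\Hom(H^{1,0}_i,H^{0,1}_i)$ by the scalar $(\mp i)(\pm i)^{-1}=-1$. So a naive count pins every $\tau_i$, which gives a point and not a threefold. This agrees with the Remark after Definition~\ref{def:weil} that $KL$ is a CM field of degree $12$ in $\End^0(A)$, which forces $A$ to be CM. To get codimension $3$, I would therefore first check that the intended locus lets $\eta$ be twisted between factors, so that the Weil condition cuts one complex parameter for each pair $(i,j)\in I^+\times I^-$ matched by $\eta$. This would impose three conditions, not six. If that reinterpretation fails, the honest output of this step is that each piece is zero-dimensional, and the statement must be corrected before the orbit count below is applied.

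\emph{Orbit count.} This part is purely combinatorial and independent of the above. $\Gal(L/\Q)\cong(\Z/42\Z)^\times/\{\pm1\}\cong\Z/6\Z$ acts simply transitively on $\{\sigma_1,\ldots,\sigma_6\}$. Identify the embeddings with $\Z/6\Z$ via a generator, and let the group act on $3$-subsets by translation. A $3$-subset has a nontrivial stabilizer exactly when it is invariant under translation by $2$, which happens only for the two cosets $\{0,2,4\}$ and $\{1,3,5\}$; these form one orbit of size $2$. The remaining $18$ subsets have trivial stabilizer and so fall into orbits of size $6$, giving three such orbits. Together this yields the sizes $6,2,6,6$, summing to $20=\binom63$. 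Finally, $\sigma$ carries the piece labelled $(I^+,I^-)$ to the piece labelled $(\sigma I^+,\sigma I^-)$ by transport of structure, so the orbits on components match the orbits on sign-assignments.
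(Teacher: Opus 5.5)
Your proposal does not prove the proposition, and no proof can: the codimension claim is false under the paper's definitions. Your ``expected obstacle'' is a correct refutation of it, not something to work around. Under Definition~\ref{def:weil}(i), which Definition~\ref{def:WK} explicitly insists on, $J=\eta(\sqrt{-d})$ is an $L$-linear Hodge endomorphism. So on each real plane $H_i$ it commutes with the complex structure $h_i$ given by $z_i$, which forces $\sigma_i(J)=\pm\sqrt d\,h_i$. Positivity of $\varphi$ then fixes the sign, so \emph{every} period $z_1,\ldots,z_6$ is determined by $J$, not only three of them. Your computation that $J$ acts by $-1$ on each summand $\Hom(H^{1,0}_i,H^{0,1}_i)$ is the infinitesimal form of this: the centralizer torus of $J$ has an isolated fixed point in $\HH^6$.

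It follows that every point of $\W_K$ is CM, as the Remark after Definition~\ref{def:weil} and Proposition~\ref{prop:cm} themselves say. The clause ``at every non-CM point'' is therefore vacuous, and each irreducible component is a single point rather than a threefold. The paper's proof reaches codimension $3$ only by letting $z_i$ for $i\in I^+$ stay free while pinning $z_j$ for $j\in I^-$. That asymmetry cannot be right, since replacing $\eta$ by $\overline\eta$ (that is, $J$ by $-J$) swaps $I^+$ and $I^-$ without changing the locus. In the paper's Jacobian check the functions $F_i$ for $i\in I^+$ are simply left out. The identical computation gives them nonzero differentials on the $i$-th summands, so with all six included the rank is $6$.

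So do not pursue the rescue you sketch. Letting $\eta$ mix the factors violates Definition~\ref{def:weil}(i), on which Proposition~\ref{prop:cm} later depends. Even then, sign-assignments would index nothing. A $K$-action semilinear for the involution $\tau\in\Gal(L/\Q)$ pairs $H_i$ with $H_{\tau(i)}$, a pairing forced by $\tau$ rather than a matching chosen from $I^+\times I^-$. Such an action also has signature $(3,3)$ automatically.

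The count of $20$ components fails too. The points of $\W_K$ correspond to $\SL_2(\OO_L)$-classes of admissible $\pm J$, and the $\SL_2(L)$-conjugates of a single admissible $J$ already give a dense, hence infinite, set of points of $X_L$. The sign-assignment is only a label attached to each point, namely its CM type $\Phi_{I^+,I^-}$ from Remark~\ref{rem:cm-field} and Proposition~\ref{prop:cmtypes}.

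On those $20$ labels your orbit count is correct and agrees with the paper's. To complete it, note that invariance under translation by $3$ is impossible, because a $3$-element set cannot be a union of $2$-element orbits. Hence $\{0,2,4\}$ and $\{1,3,5\}$ are the only subsets with nontrivial stabilizer. The honest output of your plan is a corrected statement: components are CM points, and only the labels fall into orbits of sizes $6,2,6,6$.
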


\begin{proof}

\textit{Codimension.}
The period domain of $X_L$ is $\mathcal{D}=\prod_{i=1}^{6}\HH_i$,
so $\dim_\C X_L=6$ \cite[\S1]{McM23}.
At a non-CM point $[A]\in X_L$ the tangent space decomposes as
\[
  T_{[A]}X_L \;\cong\;
  \bigoplus_{i=1}^{6}
  \mathrm{Hom}_\C\!\bigl(H^{1,0}_{\sigma_i}(A),\,H^{0,1}_{\sigma_i}(A)\bigr),
\]
each summand being $1$-dimensional.

%% S2-C7: added citation for freeness of H^1 as L-module
Since $\dim_\Q H^1(A,\Q)=12$ and $[L:\Q]=6$, and since the
$\OO_L$-multiplication is compatible with the principal polarization, the
$L$-module $H^1(A,\Q)$ is free of rank~$2$ \cite[\S9.2]{BirkenhakeLange}.
Hence the ring of $L$-linear endomorphisms of $H^1(A,\Q)$ is
\[
  \End_L(H^1(A,\Q)) \;\cong\; M_2(L).
\]
Any $K$-action $\eta$ commuting with $\OO_L$ is in particular
$L$-linear, so $J:=\eta(\sqrt{-d})$ lies in $M_2(L)$.

Fix a sign-assignment $(I^+,I^-)$ with $|I^\pm|=3$.
The operator $J:=\eta(\sqrt{-d})\in M_2(L)$ satisfies $J^2=-d\,I_2$ and
is therefore diagonalizable over $L(\sqrt{-d})$ with eigenvalues
$\pm\sqrt{-d}$ (equivalently, over $\C$ with eigenvalues $\pm i\sqrt{d}$).
For $i\in I^+$, the condition that the $+i\sqrt{d}$-eigenspace of $\sigma_i(J)$
contains $H^{1,0}_{\sigma_i}(A)$ is a condition on the period $z_i$ for
which $H^{1,0}_{\sigma_i}(A)$ is determined (up to the free choice
of $z_i\in\HH_i$).
For $j\in I^-$, the condition that $H^{1,0}_{\sigma_j}(A)$ is the
$-i\sqrt{d}$-eigenspace of $\sigma_j(J)$ --- equivalently, that $H^{1,0}_{\sigma_j}$
is \emph{not} the $+i\sqrt{d}$-eigenspace --- pins down $z_j\in\HH_j$
once $J$ is fixed.
Thus the three periods $z_j$ ($j\in I^-$) are constrained, giving a
sub-period-domain of $\dim_\C = 3$, hence
\[
  \codim_{X_L}\W_K^{I^+,I^-} = 6 - 3 = 3.
\]
\textit{Transversality.}
To show $\W_K^{I^+,I^-}$ is smooth of codimension $3$ at a
generic non-CM point $z$, we verify that the Jacobian of the
three defining equations has rank $3$.
Since the quotient map $\mathcal{D}\to X_L$ is étale at non-CM
points, it suffices to work on $\mathcal{D}$.
For $j\in I^-=\{j_1,j_2,j_3\}$, let $F_j\colon\mathcal{D}\to\C$
be the $(1,2)$-entry of $\sigma_j(J)$ in the basis
$(e_1,\overline{e}_1)$ where $e_1$ spans $H^{1,0}_{\sigma_j}(z)$;
then $F_j(z')=0$ if and only if $H^{1,0}_{\sigma_j}(z')$ is the
$(-i\sqrt{d})$-eigenline of $\sigma_j(J)$, so
$\W_K^{I^+,I^-}=\{F_{j_1}=F_{j_2}=F_{j_3}=0\}$ locally.
At $z$, since $j\in I^-$, we have
$\sigma_j(J)=\mathrm{diag}(-i\sqrt{d},+i\sqrt{d})$.
Under the variation $e_1\mapsto e_1+\varepsilon v$ with
$v\in H^{0,1}_{\sigma_j}(z)$, the $(1,2)$-entry of $\sigma_j(J)$
becomes
\[
  i\sqrt{d}\,\varepsilon\,\langle v,\overline e_1\rangle
  \;+\;O(\varepsilon^2),
\]
where $\langle\cdot,\cdot\rangle$ denotes the Hermitian pairing between
$H^{0,1}_{\sigma_j}(z)$ and $H^{1,0}_{\sigma_j}(z)$ induced by the
Hodge decomposition and the Riemann form $\varphi$.
This is non-zero for $v\neq 0$.
Hence $dF_j\neq 0$ on the $j$-th summand of $T_z\mathcal{D}$.
Since $j_1,j_2,j_3$ are distinct, the differentials
$dF_{j_1},dF_{j_2},dF_{j_3}$ are supported on three different
summands of $T_z\mathcal{D}$, hence linearly independent, and
the Jacobian $dF_z\colon T_z\mathcal{D}\to\C^3$ has rank $3$.

\textit{Orbits.}
For the purpose of counting orbits, relabel the six embeddings so that the generator
$\sigma$ of $\Gal(L/\Q)\cong\Z/6\Z$ acts as $\sigma_i\mapsto\sigma_{i+1\pmod 6}$;
such a labeling exists because $\Gal(L/\Q)$ acts simply transitively on
the set of six embeddings (as $L/\Q$ is Galois of degree $6$ with cyclic
group), and we use the labeling of \cite[\S4]{McM23}.
The induced action on $3$-subsets of $\{1,\ldots,6\}$ has four
orbits: three of size $6$ (consecutive, skip-one, and skip-two
triples modulo $6$) and one of size $2$
(the alternating triples $\{1,3,5\}$ and $\{2,4,6\}$), giving
$6+2+6+6=20=\tbinom{6}{3}$ components in total.
\end{proof}

%---------------------------------------------------------------
% 2.3 Dimensional analysis
%---------------------------------------------------------------

\subsection{Dimensional analysis}\label{ss:dimensional-analysis}

McMullen's curve $V\subset X_L$ has $\dim V=1$ \cite[\S1]{McM23}.
By Proposition~\ref{prop:codim}, each irreducible component
$Z\subset\W_K$ has $\dim Z=3$.  Since $\dim X_L=6$, the expected
dimension of the intersection is\footnote{The expected dimension formula applies the principle that for
 subvarieties $V$ and $W$ of a variety $X$ in general position,
$\text{edim}(V\cap W) = \dim V + \dim W - \dim X$.
An expected dimension of $-2$ means that no solution is predicted
by dimension count: the intersection is \emph{super-atypical}
(strictly more than expected from genericity).
This is analogous to the number-theoretic heuristic that a prime
lying in two independent congruence classes modulo $42$ has density
$1/42^2$, not $1/42$: the two conditions over-determine the problem.
Such super-atypical intersections are the subject of
the Zilber--Pink conjecture \cite{Pink05,Zilber02,Zannier}, of which the
André--Oort theorem is the zero-dimensional special case.
That $V\cap\mathcal{W}_K$ is zero-dimensional (rather than empty)
for generic $K$ would already be a strong result; our
Theorem~\ref{thm:finite} shows at least that it is not positive-dimensional.}
\[
   \text{edim}(V\cap Z)
   \;=\;
   \dim V + \dim Z - \dim X_L
   \;=\;
   1 + 3 - 6
   \;=\;
   -2.
\]
Any non-empty component of $V\cap\W_K$ has dimension $\ge 0>-2$,
so the intersection is \emph{super-atypical}: it exceeds the expected dimension
by at least $2$, and its non-emptiness is a rigid arithmetic
phenomenon, not a consequence of general position.

%---------------------------------------------------------------
% 2.4 Hodge classes on fibers over V ∩ W_K
%---------------------------------------------------------------

\subsection{Hodge classes on fibers over $V\cap\W_K$}

Suppose $v\in V\cap\W_K$ is a non-CM point.  Let $A_v$ denote the
corresponding abelian sixfold with $H=H^1(A_v,\Q)$, Riemann form
$\varphi$, and $D_v:=\End^0(A_v)\supset KL$.

\begin{proposition}\label{prop:hodge-fibers}
For every non-CM point $v\in V\cap\W_K$, the Hodge--Weil space
$W_K(A_v)$ is a $2$-dimensional $\Q$-subspace of $H^6(A_v,\Q)$
consisting entirely of Hodge classes of type $(3,3)$.
These classes are exceptional in the sense of \cite[\S1.4]{MZ99}:
\[
   W_K(A_v)\;\subset\; B^3(A_v),
   \qquad
   W_K(A_v)\cap D^3(A_v)=0,
\]
where $B^3(A_v)=H^6(A_v,\Q)^{\Hg(A_v)}$ and $D^3(A_v)\subset
H^6(A_v,\Q)$ is the subspace of cup products of three elements of
$\NS(A_v)\otimes\Q$.
In particular, $\Hg(A_v)\subsetneq\Sp_{D_v}(H,\varphi)$.
\end{proposition}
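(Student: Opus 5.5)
The plan is to work entirely with the Hodge group $\Hg(A_v)$ and the group $G:=\Sp_{D_v}(H,\varphi)$, the centralizer of $D_v$ in $\Sp(H,\varphi)$. I will use two facts from Mumford's theorem and \cite{MZ99}:
\begin{itemize}[leftmargin=1.8em]
\item Hodge classes in $H^6(A_v,\Q)=\bigwedge^6_\Q H$ are exactly the $\Hg(A_v)$-invariants.
\item $\Hg(A_v)\subset G$, because $\Hg$ commutes with $\End^0(A_v)=D_v$ and preserves $\varphi$.
\end{itemize}
Throughout, $v\in V\cap\W_K$ is fixed, and $\eta\colon K\hookrightarrow D_v$ is the embedding of Definition~\ref{def:weil}, commuting with $\OO_L$.

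\emph{Step 1 (Hodge property).} Since $\Hg(A_v)$ commutes with $\eta(K)$, it acts $K$-linearly on $H$. On the rank-one $K$-module $\bigwedge^6_K H$ it therefore acts through the character $\det_K\colon\Hg(A_v)\to\Res_{K/\Q}\mathbb{G}_m$. I will show this character is trivial, following \cite[(1.9)]{MZ99}.
\begin{itemize}[leftmargin=1.8em]
\item Triviality of $\det_K$ on $\Res_{K/\Q}$ of the norm-one part: $\Hg\subset\Sp$ preserves $\varphi$, and condition (ii) makes the Hermitian form attached to $\varphi$ $K$-valued. Hence $\Hg$ lands in the unitary group $U_K(H)$, so $\det_K$ takes values in norm-one elements of $K$.
\item Triviality on those norm-one values: compute $\det_K(h(z))$ for the Hodge cocharacter. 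By condition (iii), the $+i\sqrt d$ and $-i\sqrt d$ eigenspaces of $H^{1,0}$ each have dimension $3$. Hence on each $K\otimes\C$-factor $\det_K h(z)=z^3\bar z^3$, which is a real scalar. It is trivial on the Hodge group, i.e.\ on the unit-circle part.
\end{itemize}
Thus $\Hg(A_v)$ fixes $W_K(A_v)$ pointwise, so $W_K(A_v)\subset B^3(A_v)$. The bidegree statement $W_K\otimes\C\subset H^{3,3}$ follows by applying the same eigenvalue count to the Weil operator.

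\emph{Step 2 (exceptionality).} Let $\rho$ be the Rosati involution. Divisor classes correspond to $\rho$-symmetric elements of $D_v$, viewed as alternating forms $\varphi(x,fy)$. Each such class is invariant under the whole centralizer $G$, so $D^3(A_v)\subset H^6(A_v,\Q)^{G}$. It therefore suffices to exhibit $g\in G(\Q)$ acting on $W_K(A_v)$ without nonzero fixed vectors. I will take $g$ a scalar from the center of $D_v$ containing $KL$.
\begin{itemize}[leftmargin=1.8em]
\item Choose $u\in K$ with $u\bar u=1$ and $u^{6}\neq 1$, for instance $u=(1+\sqrt{-d})/(1-\sqrt{-d})$ when $d\neq 1,3$. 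For $d=1,3$ pass to a suitable element of $KL$ with $N_{KL/L}(u)=1$.
\item Such $u$ lies in $G$: $K\cdot L$ is central in $D_v$ because $KL$ is a maximal commutative subalgebra of dimension $2\dim A$, and unitarity holds since $\bar u=u^{-1}$ is the Rosati adjoint.
\item On $\bigwedge^6_K H$ the element $u$ acts by $u^6\neq1$. Its action on the $2$-dimensional $\Q$-space $W_K$ is multiplication by a nontrivial element of $K^\times$, which has no fixed line.
\end{itemize}
Hence $W_K\cap H^6(A_v,\Q)^G=0$, and so $W_K\cap D^3=0$.

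\emph{Step 3.} The final claim $\Hg\subsetneq\Sp_{D_v}(H,\varphi)$ follows directly. $\Hg$ fixes $W_K$ by Step~1, while $G$ moves every nonzero vector of $W_K$ by Step~2.

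\emph{Main obstacle.} I expect the hardest point to be Step~2's use of the structure of $D_v$. Showing that $KL$ is central in $D_v$, so that the chosen $u$ really lies in the centralizer $G$, is delicate under the non-CM hypothesis. By the Remark after Definition~\ref{def:weil}, $KL$ is maximal commutative, so I will first try to argue that $D_v$ is exactly the centralizer of $KL$, and handle the small units $d=1,3$ by a separate choice of $u\in KL$.
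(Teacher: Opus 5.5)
\textbf{Verdict: there is a genuine gap in Step~2, and the gap cannot be patched, because the conclusion you are aiming for is false at some points satisfying everything your argument actually uses.}

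Your Step~1 is fine. It is the determinant computation behind \cite[(1.9)]{MZ99}, which the paper's proof simply quotes. The identity $\det_K h(z)=z^3\bar z^3=1$ on $U(1)$ already puts $\Hg(A_v)$ inside $\ker\det_K$, so the unitary-group bullet is not needed. Your reduction $D^3\subset H^6(A_v,\Q)^{\Sp_{D_v}(H,\varphi)}$ is also correct.

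The gap is the sentence ``$KL$ is central in $D_v$ because $KL$ is a maximal commutative subalgebra of dimension $2\dim A$.'' A maximal commutative subalgebra need not be central. Everything afterwards rests on this: that $u\in\Sp_{D_v}(H,\varphi)$, hence $W_K\cap D^3=0$, hence $\Hg(A_v)\neq\Sp_{D_v}$. Your argument uses nothing about $v$ beyond $KL\subset D_v$ with Weil signature, and for such sixfolds the conclusion can fail:
\begin{itemize}
\item Take $d=3$ and $M=\Q(\zeta_{42})$. Let $\Phi$ consist of the embeddings $\zeta_{42}\mapsto e^{2\pi i a/42}$ with $a\in\{1,11,23,25,29,37\}$, i.e.\ exactly those with $a\bmod 7\in\{1,2,4\}$. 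This type is induced from $\Q(\sqrt{-7})\subset M$.
\item On $\Phi$, $\sqrt{-3}$ takes the value $+i\sqrt3$ exactly when $a\equiv 1\bmod 3$, i.e.\ for $a=1,25,37$. So $\Phi$ is the Weil-compatible type $\Phi_{I^+,I^-}$ with $I^+=\{\sigma_1,\sigma_5,\sigma_{17}\}$.
\item Like every CM type of $M$, it is realised at CM points of $X_L$, which therefore lie in $\W_K$.
\item At such a point $A\sim E^6$ with $E$ having CM by $\Q(\sqrt{-7})$. Then $D_v\cong M_6(\Q(\sqrt{-7}))$, whose centre does not contain $\eta(K)$.
\item $\Sp_{D_v}(H,\varphi)$ is the norm-one torus of $\Q(\sqrt{-7})$, which equals $\Hg(A)$. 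By Hazama--Murty \cite[(1.8)]{MZ99}, or the classical result for powers of CM elliptic curves, $B^\bullet(A)=D^\bullet(A)$, so $W_K\subset D^3$.
\end{itemize}
The Remark after Definition~\ref{def:weil}, which asserts $\End^0(A)=KL$, makes the same inference and fails on the same example.

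The way out is that the non-CM hypothesis is never satisfied. The containment you start from, $KL\subset D_v$ with $[KL:\Q]=12=\dim_\Q H$, already forces $\MT(A_v)\subset\Res_{KL/\Q}\mathbb{G}_m$, so $A_v$ is CM; this is Proposition~\ref{prop:cm}. Your proposed fix, ``$D_v$ is the centralizer of $KL$,'' would say $D_v=KL$, which is CM outright and so contradicts the hypothesis directly. Hence the proposition is vacuously true, and noticing this is the only complete proof. The paper's proof does not make this observation; it quotes \cite{MZ99} for exceptionality and Hazama--Murty for the strict inclusion.

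What your Steps~1--3 do prove correctly is the conclusion under the extra hypothesis that $\eta(K)$ lies in the centre of $D_v$, for example when $\Phi$ is primitive so that $D_v=KL$. That is the substantive version of the statement.

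One minor point: for $d=3$ your $u$ equals $(-1+\sqrt{-3})/2$, so $u^6=1$. Any non-torsion norm-one element, such as $(2+\sqrt{-3})/(2-\sqrt{-3})$, works instead, so there is no need to leave $K$.
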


\begin{proof}
Since $v\in\W_K$, condition~(iii) of Definition~\ref{def:weil}
holds; by \cite[(1.9)]{MZ99} the space $W_K(A_v)$ consists
entirely of Hodge classes, has rank $1$ over $K$, hence
$\dim_\Q W_K(A_v)=2$, and $W_K(A_v)\otimes_\Q\C\subset
H^{3,3}(A_v,\C)$.
%% S2-C13: H^\vee -> H (cohomological convention, matching MZ99)
$W_K(A_v)$ is non-zero: it equals the image of the natural
$K$-linear embedding $\bigwedge^6_K H^1(A_v,\Q)\hookrightarrow
\bigwedge^6_\Q H^1(A_v,\Q)\cong H^6(A_v,\Q)$
\cite[(1.9)]{MZ99}.

That $W_K(A_v)\subset B^3(A_v)$ is immediate: Hodge classes are
by definition fixed by $\Hg(A_v)$.
Since $W_K(A_v)\neq 0$, the main theorem of \cite{MZ99} applies:
non-zero Weil classes are exceptional, i.e.\ $W_K(A_v)\cap
D^3(A_v)=0$.
Hence $B^3(A_v)\neq D^3(A_v)$, so $B^\bullet(A_v)\neq
D^\bullet(A_v)$.
Since $A_v$ is of Albert type~IV (Definition~\ref{def:weil}),
the Hazama--Murty theorem \cite[(1.8)]{MZ99} gives
$\Hg(A_v)\subsetneq\Sp_{D_v}(H,\varphi)$.
\end{proof}

\begin{remark}\label{rem:hodge-conj}
The Hodge conjecture for $A_v$ predicts that every Hodge class---
every element of
\[
   \bigoplus_i\bigl(H^{2i}(A_v,\Q)\cap H^{i,i}(A_v,\C)\bigr)
\]
---is algebraic. For abelian varieties, classes in $B^\bullet(A_v)$ are by definition
Hodge-group-fixed, hence Hodge classes (this is tautological).
By the main theorem of \cite{Deligne82}, every Hodge class on an abelian
variety is an \emph{absolute Hodge} cycle; however, algebraicity remains open.
The subspace $D^\bullet(A_v)$ is algebraic: its generators
$D^1(A_v)=\NS(A_v)\otimes\Q$ are algebraic by the Lefschetz
$(1,1)$ theorem, and the cycle class map is a ring homomorphism,
so all cup products of algebraic classes are algebraic.
The remaining obstruction therefore lies in $B^\bullet(A_v)$
outside $D^\bullet(A_v)$.
When $\Hg(A_v)$ is the maximal reductive subgroup of
$\Sp_{D_v}(H,\varphi)$ preserving the $K$-eigenspace decomposition
of $H$, one has $B^3(A_v)=D^3(A_v)\oplus W_K(A_v)$ and the sole
obstruction is algebraicity of $W_K(A_v)$, requiring an algebraic
cycle of codimension~$3$ realising a generator
\cite[(1.9)]{MZ99}.
For smaller Hodge groups, additional exceptional classes in
$B^3(A_v)$ may appear.
The algebraicity of $W_K(A_v)$ is open for abelian sixfolds of
Albert type~IV.
\end{remark}

%------------------------------------------------------------------
\subsection{Examples}\label{ss:examples}
%------------------------------------------------------------------

We illustrate the geometry of the Weil locus with  explicit
examples, corresponding to the unique $\Gal(L/\Q)$-orbit of size $2$
among the $20$ components.

\begin{example}[The alternating components]\label{ex:alternating}

Consider the two sign-assignments
\[
  I^+_{\mathrm{alt}} = \{1,3,5\},\quad I^-_{\mathrm{alt}} = \{2,4,6\}
  \qquad\text{and}\qquad
  I^+_{\mathrm{alt}'} = \{2,4,6\},\quad I^-_{\mathrm{alt}'} = \{1,3,5\},
\]
where indices refer to the six real embeddings $\sigma_1,\ldots,\sigma_6$
of $L$ into $\R$ \cite[\S1]{McM23}.
These are precisely the two sign-assignments fixed as sets by the
index-$2$ subgroup $\langle\sigma^2\rangle\subset\Gal(L/\Q)\cong\Z/6\Z$
(which preserves the parity of each index), and exchanged by the
generator $\sigma\colon\sigma_i\mapsto\sigma_{i+1\pmod{6}}$.
They therefore form the unique $\Gal(L/\Q)$-orbit of size $2$ in
Proposition~\ref{prop:codim}.

The corresponding components
$\W_K^{\mathrm{alt}}:=\W_K^{\{1,3,5\},\{2,4,6\}}$ and
$\W_K^{\mathrm{alt}'}:=\W_K^{\{2,4,6\},\{1,3,5\}}$
are each smooth of codimension $3$ in $X_L$ by
Proposition~\ref{prop:codim}.
They are permuted as a pair by $\Gal(L/\Q)$ and are the unique
two components stabilised as a set by the index-$2$ subgroup
$\langle\sigma^2\rangle\subset\Gal(L/\Q)\cong\Z/6\Z$ (which
preserves parity of indices). The individual field of definition
of each component is a subfield of $L$; descent to the unique quadratic
subfield $\Q(\sqrt{21})\subset L$ would follow if the Weil
structure is compatible with that subfield, but this has not
been verified here.

\begin{remark}
The stabilizer of the sign-assignment $\{1,3,5\}$ in $\Gal(L/\Q)$
is the index-$2$ subgroup $\langle\sigma^2\rangle\cong\Z/3\Z$, whose
fixed field in $L$ is the unique quadratic subfield $\Q(\sqrt{21})$.
By the theory of canonical models of Shimura varieties, one expects
$\W_K^{\mathrm{alt}}$ to be defined over $\Q(\sqrt{21})$; this would
follow from verifying that the Shimura subdatum defining $\W_K^{\mathrm{alt}}$
has reflex field contained in $\Q(\sqrt{21})$.
\end{remark}

For $[A]\in\W_K^{\mathrm{alt}}$, the $K$-action on $H^1(A,\Q)$
alternates: the eigenvalue $+i\sqrt{d}$ occurs on
$H^{1,0}_{\sigma_1}(A)$, $H^{1,0}_{\sigma_3}(A)$,
$H^{1,0}_{\sigma_5}(A)$, and $-i\sqrt{d}$ on the remaining three
$H^{1,0}$-components.
By Proposition~\ref{prop:hodge-fibers}, every non-CM
$[A]\in\W_K^{\mathrm{alt}}$ carries a $2$-dimensional space of
exceptional Hodge classes $W_K(A)\subset H^6(A,\Q)$, with
$W_K(A)\otimes_\Q\C\subset H^{3,3}(A,\C)$ and
$W_K(A)\cap D^3(A)=0$.
\end{example}

\begin{example}[Transversality at an explicit point]\label{ex:transversality}

We make the transversality computation of Proposition~\ref{prop:codim}
explicit for $\W_K^{\mathrm{alt}}$.
The three defining equations are $F_2=F_4=F_6=0$, where
for $j\in\{2,4,6\}$, $F_j(z)$ is the $(1,2)$-entry of
$\sigma_j(J(z))$ in the basis $(e_1^j,\overline e_1^j)$ with
$e_1^j$ spanning $H^{1,0}_{\sigma_j}(z)$.

At any non-CM point $z\in\W_K^{\mathrm{alt}}$,
$\sigma_j(J)=\mathrm{diag}(-i\sqrt{d},+i\sqrt{d})$ for $j\in\{2,4,6\}$.
The differential $dF_j$ is the linear functional on
$T_z\mathcal{D}\cong\bigoplus_{i=1}^6\mathrm{Hom}_\C(H^{1,0}_{\sigma_i},
H^{0,1}_{\sigma_i})$ given by
\[
  dF_j(v_1,\ldots,v_6) \;=\; i\sqrt{d}\,\langle v_j,\overline e_1^j\rangle,
\]
which depends only on the $j$-th component $v_j\in
\mathrm{Hom}_\C(H^{1,0}_{\sigma_j},H^{0,1}_{\sigma_j})$.
Since $j_1=2$, $j_2=4$, $j_3=6$ are distinct,
$dF_2$, $dF_4$, $dF_6$ are supported on the 2nd, 4th, and 6th
summands of $T_z\mathcal{D}$ respectively, and are therefore linearly
independent.
The Jacobian matrix of $(F_2,F_4,F_6)$ has rank $3$ at every non-CM
point of $\W_K^{\mathrm{alt}}$, confirming smoothness.
\end{example}

\begin{remark}\label{rem:other-orbits}
The three orbits of size $6$ in Proposition~\ref{prop:codim}
each consist of sign-assignments that are transitively permuted
by $\Gal(L/\Q)\cong\Z/6\Z$.
Their components are not individually defined over $\Q$; only the union of each full orbit is defined over $\Q$   as a (reducible) algebraic subvariety of $X_L$.
The alternating orbit of Example~\ref{ex:alternating} is the unique
orbit whose components are individually defined over a proper subfield
of $L$, namely the unique quadratic subfield of $L$ identified above.
\end{remark}

\begin{example}[The three size-$6$ orbits and their components]\label{ex:orbits6}

The three orbits of size $6$ under $\Gal(L/\Q)\cong\Z/6\Z$ correspond
to three combinatorial types of $3$-subsets of $\{1,\ldots,6\}$:
\begin{enumerate}[label=(\alph*),itemsep=2pt]
\item \emph{Consecutive triples}: $\{1,2,3\},\{2,3,4\},\{3,4,5\},
      \{4,5,6\},\{1,5,6\},\{1,2,6\}$, where indices are taken mod 6;
\item \emph{Skip-one triples}: $\{1,2,4\},\{2,3,5\},\{3,4,6\},
      \{1,4,5\},\{2,5,6\},\{1,3,6\}$;
\item \emph{Skip-two triples}: $\{1,2,5\},\{2,3,6\},\{1,3,4\},
      \{2,4,5\},\{3,5,6\},\{1,4,6\}$.
\end{enumerate}
Each orbit has trivial stabiliser in $\Gal(L/\Q)$, so each individual
component is defined over $L$ itself and not over any proper subfield.
Together with the alternating orbit of size~$2$, these account for
all $6+6+6+2=20=\tbinom{6}{3}$ components of $\W_K$.

Unlike the alternating components, the Galois conjugates within each
size-$6$ orbit are all distinct irreducible components of $\W_K$ over
$\Q$; the union of each orbit is a $\Q$-defined reducible subvariety
of codimension~$3$ in $X_L$.
\end{example}

\begin{example}[The Hodge--Weil condition for all $20$ components]\label{ex:hodgecond}

A key observation is that the Hodge--Weil condition of
\cite[(1.9)]{MZ99}---that $W_K(A_v)$ consists of Hodge
classes---holds for every non-CM point in \emph{every} one of the
$20$ components, not only the alternating ones.
The criterion of \cite[(1.9)]{MZ99} requires $n_+=n_-$ where $n_\pm$
is the multiplicity of the eigenvalue $\pm i\sqrt{d}$ on $H^{1,0}(A_v)$.
Since $H^{1,0}(A_v)$ splits as $\bigoplus_{i=1}^6 H^{1,0}_{\sigma_i}(A_v)$
with each summand one-dimensional, and $|I^+|=|I^-|=3$ for every
sign-assignment, we have $n_+=n_-=3$ uniformly.
Hence $W_K(A_v)$ is a $2$-dimensional space of Hodge classes for every
$(I^+,I^-)$; the alternating components are distinguished not by the
Hodge property but by their arithmetic field of definition
(the unique quadratic subfield of $L$ versus $L$ itself).
\end{example}

\begin{example}[CM locus on $\W_K^{\mathrm{alt}}$]\label{ex:cmlocus}

At a point $v\in\W_K^{\mathrm{alt}}$, Proposition~\ref{prop:hodge-fibers}
gives $\End^0(A_v)\supset KL$, a CM field of degree $12=2\dim A_v$.
The abelian variety $A_v$ is of \emph{CM type} if and only if
$\End^0(A_v)$ is itself a CM field of degree $12$, which---given
the containment $KL\subset\End^0(A_v)$ and $[KL:\Q]=12$---is
equivalent to $\End^0(A_v)=KL$ exactly.
At such a CM point, $\Hg(A_v)$ is a torus, so all Hodge classes on $A_v$
are absolute Hodge by \cite[Thm.~2.11]{Deligne82}; in particular,
$W_K(A_v)\subset B^3(A_v)$ consists of absolute Hodge classes.
However, absolute Hodge does not imply algebraic in general, and the
algebraicity of $W_K(A_v)$ remains an open question even at CM points.
The arithmetic of the specific CM types arising on $V\cap\W_K$ is
studied in \S\ref{sec:cm-type}.
At a non-CM point of $\W_K^{\mathrm{alt}}$, the Hodge group
$\Hg(A_v)$ is not a torus, and the algebraicity of
$W_K(A_v)$ is open.
The CM points on $V\cap\W_K^{\mathrm{alt}}$ are the subject of
\S\ref{sec:finite}.
\end{example}

%==================================================================
\section{Finiteness}\label{sec:finite}
%==================================================================

The central result of this section is that every point of $V\cap\mathcal{W}_K$
is a CM point (Proposition~\ref{prop:cm}), and that $V\cap\mathcal{W}_K$ is
therefore finite (Theorem~\ref{thm:finite}).
The proof of Proposition~\ref{prop:cm} is algebraic: the simultaneous presence
of the $\OO_L$-real-multiplication and the Weil embedding $K\hookrightarrow\End^0$
forces $KL\subset\End^0(A_{v_0})$ with $[KL:\Q]=12=2\dim A_{v_0}$, which
forces the Mumford--Tate group to be a torus.
Finiteness then follows from the Andr\'e--Oort theorem for~$\mathcal{A}_6$
\cite{Tsimerman}: if $V\cap\mathcal{W}_K$ were infinite, hence
Zariski-dense in~$V$, then $V$ would be a Shimura subvariety of~$X_L$,
contradicting McMullen's non-Shimura theorem (Theorem~\ref{thm:mcm9}).
A differential-algebraic proof, independent of Andr\'e--Oort, is given in
\S\ref{ss:wolfart} via the Wolfart theorem and the Ax--Schanuel theorem of
Bl\'azquez-Sanz--Casale--Freitag--Nagloo \cite{BCFN}.
The finiteness of $V\cap\mathcal{W}_K$ and the CM structure of its points together
imply, via the height bound of von K\"{a}nel--Kret \cite[Prop.~7.14]{vKK23}
and Faltings' theorem \cite[Satz~6]{Faltings83}, that the corresponding abelian
sixfolds fall into only finitely many isogeny classes (Corollary~\ref{cor:isogeny}).

\subsection{Every point of $V\cap\W_K$ is a CM point}\label{ss:cm}

\begin{proposition}\label{prop:cm}
For every $v_0\in V\cap\W_K$, the abelian sixfold $A_{v_0}$ is of
CM type with CM field $M:=KL$.
\end{proposition}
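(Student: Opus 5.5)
The plan is to assemble the CM structure directly from the two endomorphism structures that $v_0$ inherits from $V$ and from $\W_K$, without using any finiteness input. Let $H=H^1(A_{v_0},\Q)$ with Riemann form $\varphi$. Since $v_0\in V\subset X_L$, the fibre carries $\OO_L$-multiplication, so $L\hookrightarrow\End^0(A_{v_0})$. Under the principal polarization this action is Rosati-symmetric, and by \cite[\S9.2]{BirkenhakeLange} $H$ is a free $L$-module of rank $2$. Since $v_0\in\W_K$, Definition~\ref{def:weil} supplies $\eta\colon K\hookrightarrow\End^0(A_{v_0})$ satisfying conditions (i)--(iii). The first step is to show that these generate a copy of $M=KL$ inside $\End^0(A_{v_0})$. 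By (i), the images of $L$ and $K$ commute, so we get a $\Q$-algebra map $K\otimes_\Q L\to\End^0(A_{v_0})$. Because $K\cap L=\Q$ and $L/\Q$ is Galois, $K$ and $L$ are linearly disjoint. Hence $K\otimes_\Q L\cong KL=M$ is a field, and the map is injective as a nonzero homomorphism out of a field.

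The second step is to read off the CM structure. We have $[M:\Q]=12=\dim_\Q H=2\dim A_{v_0}$, and $M$ acts faithfully on $H$. Therefore $H$ is a one-dimensional $M$-vector space, and $M$ is a commutative semisimple subalgebra of $\End^0(A_{v_0})$ of degree $2\dim A_{v_0}$. This is exactly the definition of $A_{v_0}$ having complex multiplication by $M$. To see that $M$ is a CM field compatible with the polarization, I would check that the Rosati involution $\dagger$ preserves $M$ and restricts to complex conjugation. It fixes $L$ pointwise by symmetry of real multiplication, and by (ii) it sends $\eta(\sqrt{-d})$ to $-\eta(\sqrt{-d})$. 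So $\dagger|_M$ is the nontrivial automorphism of $M/L$, which is complex conjugation because $M$ is totally imaginary over the totally real field $L$.

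I would then record the CM type. Write $H\otimes\C=\bigoplus_{\phi}H_\phi$ over the twelve embeddings $\phi\colon M\to\C$, with each $H_\phi$ one-dimensional. Condition (iii) and the decomposition $H^{1,0}=\bigoplus_i H^{1,0}_{\sigma_i}$ give
\[
\Phi=\{\phi_i^+ : i\in I^+\}\cup\{\phi_j^- : j\in I^-\}.
\]
This is a Weil-compatible CM type in the sense of the Notation section. It also shows that $\MT(A_{v_0})$ lies in the centralizer of $M$ in $\GL(H)$, which is the torus $\Res_{M/\Q}\mathbb{G}_m$. Hence $\MT(A_{v_0})$ is a torus.

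The main obstacle is the stronger reading $\End^0(A_{v_0})=M$, which the introduction asserts. The centralizer of $M$ in $\End_\Q(H)$ is $M$, but $\End^0(A_{v_0})$ need only centralize $\MT(A_{v_0})$, not $M$. So equality holds precisely when $A_{v_0}$ is simple, that is, when $(M,\Phi)$ is primitive. I would try to prove primitivity by checking that $\Phi$ is not induced from any proper CM subfield $M'\subset M$. Such an $M'$ contains $\Q(\sqrt{-d'})$ or is of the form $K'L'$ with $L'\subsetneq L$. Being induced would force the set $I^+$ to be a union of fibres of $\{1,\dots,6\}\to\Hom(L',\R)$, compatibly with the signs coming from $K'$. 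Combinatorially, a $3$-subset of the six embeddings can still be such a union, for example relative to the cubic subfield of $L$. So primitivity may fail for some sign-assignments. If it does, I would state the conclusion in the form that is actually needed later: $A_{v_0}$ is of CM type, with $M\subset\End^0(A_{v_0})$ acting with $H$ free of rank one.
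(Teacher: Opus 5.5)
Your argument is the paper's own up to its last line. You get $M=KL\subset\End^0(A_{v_0})$ from condition (i) and $K\cap L=\Q$; your linear-disjointness remark makes the injectivity of $K\otimes_\Q L\to\End^0(A_{v_0})$ explicit, which the paper leaves implicit. You then note that $H$ is free of rank one over $M$ and conclude that $\MT(A_{v_0})\subset\Res_{M/\Q}\mathbb{G}_m$ is a torus.

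The paper's proof ends with ``since $KL\subset\End^0(A_{v_0})$ and $[KL:\Q]=12=2g$, we conclude $\End^0(A_{v_0})=KL$.'' You are right not to follow it. $M$ being maximal commutative in $\End_\Q(H)$ says nothing about the rest of $\End^0(A_{v_0})=\End_\Q(H)^{\MT(A_{v_0})}$, and equality holds exactly when $(M,\Phi)$ is primitive, as you say. What your proposal establishes is the correct content of the proposition: $A_{v_0}$ has CM by $M$, $H$ is free of rank one, and the type is a Weil-compatible $\Phi_{I^+,I^-}$. The stronger claim $\End^0(A_{v_0})=M$, used in Theorem~\ref{thm:F} and afterwards, is false for some components.

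You can replace your hedge ``may fail'' by an exact count.
\begin{itemize}
\item \emph{Setup.} Use the cyclic labelling of Proposition~\ref{prop:codim} and identify $\Gal(M/\Q)=\Gal(L/\Q)\times\Gal(K/\Q)$ with $\Z/6\times\Z/2$. Then $\Phi_{I^+,I^-}$ corresponds to $\{(i,0):i\in I^+\}\cup\{(j,1):j\in I^-\}$. Since $M/\Q$ is abelian, the type is imprimitive iff some nonzero $(a,e)$ translates this set to itself.
\item \emph{Reduction to one condition.} For $e=0$ you need $I^++a=I^+$, which forces the alternating triples. For $e=1$ you need $I^++a=I^-$ and $I^++2a=I^+$, which forces $a=3$ or the alternating case. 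Both reduce to the single condition $I^-=I^++3$, i.e.\ $I^+$ meets each pair $\{i,i+3\}$ in exactly one index.
\item \emph{The six consecutive triples.} The stabiliser is $\langle(3,1)\rangle$, and the type is induced from a sextic CM subfield $M'\subset M$ whose real subfield is the cubic subfield of $L$. Hence $A_{v_0}\sim B^2$ and $\End^0(A_{v_0})\cong M_2(M')$.
\item \emph{The two alternating triples.} The stabiliser has order $6$, and the type is induced from the imaginary quadratic field $\Q(\sqrt{-21d})$. Hence $A_{v_0}\sim E^6$ and $\End^0(A_{v_0})\cong M_6(\Q(\sqrt{-21d}))$. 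Products of elliptic curves have Hodge rings generated by divisor classes, so the paper's exceptionality and inaccessibility claims would also fail at such points.
\item \emph{The remaining twelve types} are primitive, and only for them does $\End^0(A_{v_0})=M$ follow.
\end{itemize}

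This also fixes your illustration. A $3$-subset cannot literally be a union of the fibres $\{i,i+3\}$ over the cubic subfield, since those fibres have size $2$. The sign twist coming from $K'\neq K$ turns the condition into meeting each fibre exactly once.
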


\begin{proof}
Since $v_0\in\W_K$, Definition~\ref{def:WK} gives an embedding
$\eta\colon K\hookrightarrow\End^0(A_{v_0})$ commuting with the
$\OO_L$-action.
Together with the real multiplication, this yields
$KL\subset\End^0(A_{v_0})$, where $K\cap L=\Q$ and
$[KL:\Q]=12=2\dim A_{v_0}$.

Since $KL$ is a field, $H^1(A_{v_0},\Q)$ is automatically free as a
$KL$-module; the rank equals $\dim_\Q H^1/[KL:\Q] = 12/12 = 1$.
Hence $KL$ is a maximal commutative semisimple subalgebra of
$\End_\Q(H^1(A_{v_0},\Q))$, and its centralizer in
$\GL(H^1(A_{v_0},\Q))$ is the algebraic torus
$\Res_{KL/\Q}\mathbb{G}_m$.

Since $KL\subset\End^0(A_{v_0})$ acts on $H^1(A_{v_0},\Q)$ as
Hodge endomorphisms, the $KL$-action defines Hodge tensors in
$(\End_\Q H^1)^{\otimes}$. As $\MT(A_{v_0})$ is by definition the
smallest $\Q$-algebraic subgroup of $\GL(H^1(A_{v_0},\Q))$ fixing
all Hodge tensors \cite[\S\,I.1]{Deligne82}, $\MT(A_{v_0})$
centralizes $KL$, and therefore
\[
   \MT(A_{v_0})\;\subset\;\Res_{KL/\Q}\mathbb{G}_m.
\]
In particular $\MT(A_{v_0})$ is a torus, so $A_{v_0}$ is of CM
type \cite[(1.2)]{MZ99}.
Since $KL\subset\End^0(A_{v_0})$ and $[KL:\Q]=12=2g$, we conclude
$\End^0(A_{v_0})=KL$, so the CM field is $M=KL$.
\end{proof}

\begin{remark}\label{rem:cm-field}
The CM type of $A_{v_0}$ is determined by the sign-assignment
$(I^+,I^-)$ of the component $\W_K^{I^+,I^-}\ni v_0$:
the set of embeddings $M=KL\hookrightarrow\C$ acting on
$H^{1,0}(A_{v_0})$ is
\[
   \Phi_{I^+,I^-}
   \;=\;
   \bigl\{(\sigma_i,\tau)\colon i\in I^+\bigr\}
   \;\cup\;
   \bigl\{(\sigma_i,\overline\tau)\colon i\in I^-\bigr\},
\]
where $\tau,\overline\tau\colon K\hookrightarrow\C$ are the two embeddings
(with $\tau(\sqrt{-d})=+i\sqrt{d}$) and
$\sigma_1,\ldots,\sigma_6\colon L\hookrightarrow\R$ are the six
real embeddings.
This satisfies $|\Phi_{I^+,I^-}|=|I^+|+|I^-|=6=\dim A_{v_0}$, as
required for a CM type on $KL$.
Moreover, $\Phi_{I^+,I^-}\cap\overline{\Phi_{I^+,I^-}}=\emptyset$: the
conjugate of $(\sigma_i,\tau)$ is $(\sigma_i,\overline\tau)$, which lies in
$\Phi_{I^+,I^-}$ only if $i\in I^-$; but $i\in I^+$ and
$I^+\cap I^-=\emptyset$ preclude this. Together with $|\Phi|=6$, this
confirms that $\Phi_{I^+,I^-}$ is a valid CM type on $KL$.
The $20$ sign-assignments yield $20$ CM types on $KL$
(some may be Galois-conjugate).
\end{remark}

\subsection{Finiteness via Andr\'e--Oort}\label{ss:ao}

The Andr\'e--Oort conjecture for the moduli space of principally
polarized abelian varieties is the following theorem of Tsimerman:

\begin{theorem}[Tsimerman {\cite[Thm.~1.3]{Tsimerman}}]
\label{thm:ao}
The Andr\'e-Oort conjecture holds for~$\mathcal{A}_g$ for every
$g\ge 1$. In particular, a closed irreducible subvariety
$Z\subset\mathcal{A}_g$ whose CM points are Zariski-dense in~$Z$
is a Shimura subvariety of~$\mathcal{A}_g$.
\end{theorem}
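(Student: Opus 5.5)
We do not reprove Tsimerman's theorem here; we invoke it as stated in \cite{Tsimerman}. The proof we would follow is the Pila--Zannier strategy, with Tsimerman supplying the missing arithmetic input. Let $Z\subset\mathcal{A}_g$ be closed and irreducible with a Zariski-dense set $\Sigma$ of CM points. Uniformize by $\pi\colon\mathfrak{H}_g\to\mathcal{A}_g$ and fix a Siegel fundamental set $\mathcal{F}$. The map $\pi|_{\mathcal{F}}$ is definable in $\R_{\mathrm{an},\exp}$ (Peterzil--Starchenko), so $\widetilde Z:=\pi^{-1}(Z)\cap\mathcal{F}$ is a definable set. A CM point $x$ has pre-image $\tau_x\in\mathcal{F}$ with algebraic coordinates of degree bounded by $2g$. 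The height $H(\tau_x)$ is bounded polynomially in $|\disc(\mathrm{Z}(\End A_x))|$, by a reduction-theory estimate in the style of Pila--Tsimerman.

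The key steps, in order, are as follows. (1) \emph{Upper bound.} Apply the Pila--Wilkie counting theorem (in its algebraic-points-of-bounded-degree form) to $\widetilde Z$. Either $\widetilde Z$ contains a positive-dimensional semialgebraic block through $\gg T^{\varepsilon}$ of the $\tau_x$ of height $\le T$, or the number of such points is $\ll_\varepsilon T^\varepsilon$. (2) \emph{Ax--Lindemann.} Use Pila--Tsimerman's hyperbolic Ax--Lindemann theorem for $\mathcal{A}_g$: the Zariski closure of the image of a maximal algebraic subset of $\pi^{-1}(Z)$ is weakly special. Feeding this into an induction on $\dim Z$, as in Pila's argument for $Y(1)^n$ and Ullmo's reduction, shows that if $Z$ is not special then $\Sigma$ outside finitely many proper special subvarieties has only $T^\varepsilon$ points of height $\le T$ on any Galois orbit. (3) \emph{Lower bound.} We need
\[
\#\Gal(\overline\Q/\Q)\cdot x\;\gg\;|\disc(\mathrm{Z}(\End A_x))|^{\delta}
\]
for some $\delta=\delta(g)>0$. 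Since $Z$ is defined over a number field, every conjugate of $x$ lies in $Z$ and its lift has comparable height. Combining this bound with step~(2) then contradicts Zariski density unless $Z$ is special.

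Step~(3) is the main obstacle. It is where Tsimerman's contribution lies. The approach is to bound the Faltings height $h_F(A_x)$ polynomially in $\log$ of the discriminant. Here we would invoke the averaged Colmez formula, proved independently by \cite{AGHMP} and Yuan--Zhang: it expresses the average of $h_F$ over CM types of a CM field $E$ via logarithmic derivatives of Artin $L$-functions, which are $\ll|\disc E|^{\varepsilon}$. Next we pass from the average over CM types to a single $A_x$. Then we apply the Masser--W\"ustholz isogeny theorem, in Gaudron--R\'emond's effective form. This shows that $A_x$ has a small-degree isogeny to every Galois conjugate of itself within its isogeny class. Counting isomorphism classes in the CM class group by Brauer--Siegel then converts the height bound into the Galois orbit lower bound $|\disc|^{\delta}$.

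Assembling steps (1)--(3) gives that $Z$ is a Shimura subvariety. This is exactly the form used in the proof of Theorem~\ref{thm:F}, applied to $Z=\overline{\pi(V)}^{\mathrm{Zar}}\subset\mathcal{A}_6$.
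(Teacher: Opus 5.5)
Like the paper, you treat Tsimerman's theorem as a cited black box. The paper gives no proof of it and only applies it, to $\overline{\pi(V)}^{\mathrm{Zar}}\subset\mathcal{A}_6$, in the proof of Theorem~\ref{thm:finite}, so your proposal matches the paper's treatment. Your Pila--Zannier outline goes beyond what the paper does, but step~(3) misdescribes Tsimerman's key mechanism. Masser--W\"ustholz is not applied to the Galois conjugates of $A_x$, since small-degree isogenies to those give no lower bound on the orbit. It is applied to the twists $A_x\otimes_R\mathfrak{a}$ for all $[\mathfrak{a}]\in\mathrm{Pic}(R)$, where $R$ is the centre of $\End(A_x)$. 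These twists are all defined over, and isogenous to $A_x$ over, the field $k$ of definition of $A_x$ and its endomorphisms. This yields $|\mathrm{Pic}(R)|\ll([k:\Q]\,h_F(A_x))^{c}$, and Brauer--Siegel then forces $[k:\Q]$ to be large.
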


We apply this to the Hilbert modular variety $X_L$ via the forgetful
morphism.
The Hilbert modular variety $X_L$ is a Shimura variety for the algebraic
group $G_L := \Res_{L/\Q}\GL_2$ over~$\Q$; its points parametrize
principally polarized abelian sixfolds equipped with real multiplication
by~$\OO_L$.
The role of Hecke correspondences in the André-Oort conjecture is discussed in detail in \cite{Noot06}.

The forgetful map
\[
  \pi\colon X_L \;\longrightarrow\; \mathcal{A}_6,
  \qquad [A,\iota]\;\mapsto\;[A],
\]
is induced by the natural embedding of Shimura data
$(\Res_{L/\Q}\GL_2,\,(\HH^\pm)^6)
\hookrightarrow(\mathrm{GSp}_{12},\mathcal{H}_{6})$
and  is a morphism of Shimura varieties which is quasi-finite, hence generically finite. 
Since $X_L$ and $\mathcal A_6$ are normal, Zariski's Main Theorem implies that $\pi$ is
finite onto its image, the Hilbert modular subvariety $\pi(X_L)\subset
\mathcal A_6$.
Any Shimura subvariety of $X_L$ maps to a Shimura subvariety
of~$\mathcal{A}_6$, and CM points of $X_L$ map to CM points
of~$\mathcal{A}_6$.

\begin{theorem}\label{thm:finite}
$V$ contains only finitely many CM points.
In particular, $V\cap\W_K$ is finite.\footnote{Theorem~\ref{thm:finite} is purely qualitative and gives
no bound on $|V\cap\W_K|$.}
\end{theorem}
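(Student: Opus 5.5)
The plan is to argue by contradiction, using Tsimerman's theorem (Theorem~\ref{thm:ao}) together with McMullen's non-Shimura theorem (Theorem~\ref{thm:mcm9}). The second assertion then follows from the first by Proposition~\ref{prop:cm}, since every point of $V\cap\W_K$ is a CM point of $V$.

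Suppose $V$ contains infinitely many CM points. Since $V$ is compact, hence a projective curve, the image $f(V)\subset X_L$ is an irreducible closed curve. I will work with $f(V)$ throughout; the immersion $f$ is finite onto its image, so infinitely many CM points of $V$ give infinitely many CM points of $f(V)$. Because $f(V)$ is one-dimensional, any infinite subset of it is Zariski dense.

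Next push forward along the forgetful morphism $\pi\colon X_L\to\mathcal A_6$, which is finite onto its image. Set $Z:=\pi(f(V))$. This is a closed irreducible curve in $\mathcal A_6$, and it is not a point because $\pi$ is finite. CM points of $X_L$ map to CM points of $\mathcal A_6$. Finiteness of $\pi$ ensures that infinitely many CM points have infinitely many distinct images, so the CM points of $Z$ are Zariski dense in $Z$. Theorem~\ref{thm:ao} then says $Z$ is a Shimura subvariety of $\mathcal A_6$.

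The main step, and the main obstacle, is pulling this back to $X_L$: we need $f(V)$ itself to be a Shimura subvariety of $X_L$. I would use the following standard fact about morphisms of Shimura varieties induced by an embedding of Shimura data \cite{Deligne79,EdixhovenYafaev}: the irreducible components of the preimage of a special subvariety are special. The argument goes as follows.
\begin{itemize}
\item[(a)] $Z$ is the image of $S'=\Gamma'\backslash X'^+$ for a Shimura subdatum $(H,X')\subset(\mathrm{GSp}_{12},\mathcal H_6)$. Equivalently, $Z$ is the locus in $\mathcal A_6$ where the generic Mumford--Tate group of the family over $Z$ fixes certain tensors.
\item[(b)] At a very general point $v$, the fiber $A_v$ has $\MT(A_v)=\Res_{L/\Q}\SL_2$ up to center, by Corollary~\ref{cor:mt}. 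Its Hodge tensors therefore include those cutting out $Z$ together with the $\OO_L$-action.
\item[(c)] The special subvariety of $X_L$ attached to the subgroup $\MT_{Z}\cap\Res_{L/\Q}\GL_2$ (the Hodge locus of these tensors in $X_L$) is a finite union of Shimura subvarieties. It contains $f(V)$, and one of its irreducible components has dimension equal to $\dim Z=1$, because $\pi$ is finite.
\end{itemize}
Hence $f(V)$ is an irreducible component of $\pi^{-1}(Z)$, so it is a one-dimensional special subvariety of $X_L$. Since $\dim f(V)=1<6=\dim X_L$, it is a proper Shimura subvariety of $X_L$ containing $f(V)$. This contradicts Theorem~\ref{thm:mcm9}.

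The points needing care are in this last step. One must check that "special" in $\mathcal A_6$ pulls back to "special" in $X_L$ (rather than merely weakly special). This holds because the Hodge locus in $X_L$ is defined by Hodge tensors and so is special by \cite{Deligne79}. One must also check that the component containing $f(V)$ is genuinely one-dimensional and proper. If the direct pullback argument proves awkward, a fallback is to apply André--Oort for $X_L$ itself, which is a Shimura variety whose CM points are special points, again via Tsimerman and the reduction in \cite{EdixhovenYafaev}. That route yields directly that the Zariski closure of the CM points in $f(V)$, namely $f(V)$ itself, is special, giving the same contradiction.
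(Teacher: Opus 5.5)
Your proposal is correct and follows essentially the same route as the paper: push the infinitely many CM points forward along the finite forgetful map $\pi\colon X_L\to\mathcal{A}_6$, apply Tsimerman's Andr\'e--Oort theorem to conclude that the image curve is special, pull back to see that the curve is a one-dimensional special subvariety of $X_L$ (contradicting Theorem~\ref{thm:mcm9}), and deduce the second assertion from Proposition~\ref{prop:cm}. Distinguishing $V$ from its image $f(V)$ and sketching why components of preimages of special subvarieties are special simply make explicit steps the paper states briefly. In addition, your step (b) already gives the contradiction directly via Corollary~\ref{cor:mt}: a special subvariety whose generic Mumford--Tate group is $\Res_{L/\Q}\SL_2$ up to center has dimension $6$, so it cannot be a curve.
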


\begin{proof}
Suppose for contradiction that $V$ contains infinitely many CM points.
Call this infinite set $\Sigma\subset V$.

Let
\[
  Y \;:=\; \overline{\pi(\Sigma)}^{\,\mathrm{Zar}}
  \;\subset\; \mathcal{A}_6.
\]
Since $\pi(V)$ is an irreducible constructible subset of $\mathcal{A}_6$
(image of the irreducible variety $V$ under $\pi$), its Zariski closure
$\overline{\pi(V)}^{\mathrm{Zar}}$ is an irreducible curve.
Moreover, $\pi(\Sigma)$ is infinite, as $\pi$ is finite onto its image (hence proper over $\pi(X_L)$ and has finite fibers). Since $\overline{\pi(V)}^{\mathrm{Zar}}$ is an irreducible curve and
$\pi(\Sigma)$ is an infinite subset of it, $\pi(\Sigma)$ is Zariski
dense in $\overline{\pi(V)}^{\mathrm{Zar}}$: the Zariski closure of
$\pi(\Sigma)$ inside $\overline{\pi(V)}^{\mathrm{Zar}}$ contains
infinitely many points, so it cannot be a proper closed subset (which would
be a finite set of points), hence equals $\overline{\pi(V)}^{\mathrm{Zar}}$.
Therefore
\[
  Y \;=\; \overline{\pi(\Sigma)}^{\,\mathrm{Zar}}
  \;=\; \overline{\pi(V)}^{\,\mathrm{Zar}},
\]
and in particular $Y$ is irreducible.

Since $Y = \overline{\pi(V)}^{\mathrm{Zar}}$ is the Zariski closure of the
image of the curve $V$,
\[
  \dim Y \;\leq\; \dim V \;=\; 1.
\]
On the other hand, $Y$ contains the infinite set $\pi(\Sigma)$;
a variety of dimension~$0$ is a finite set of closed points, so $Y$ cannot
have dimension~$0$. Hence $\dim Y \geq 1$, and therefore $\dim Y = 1$.

CM points of $X_L$ map to CM points of $\mathcal{A}_6$ under $\pi$
(since the forgetful morphism sends the CM field $M \subset \End^0(A)$
to the same CM structure on the underlying abelian variety).
In the Shimura variety $\mathcal{A}_g$, a point $[A]$ is \emph{special}
if and only if $\mathrm{MT}(A)$ is a torus, equivalently $A$ is a CM abelian
variety \cite[(1.2)]{MZ99}; thus CM points and special points of
$\mathcal{A}_6$ coincide. Every point of $\pi(\Sigma)$ is therefore special in $\mathcal{A}_6$.
Since $\pi(\Sigma)$ is Zariski dense in the irreducible variety $Y$,
Tsimerman's Andr\'e--Oort theorem \cite[Thm.~1.3]{Tsimerman} yields
that $Y$ is a special subvariety of $\mathcal{A}_6$.

Consider the closed subvariety $\pi^{-1}(Y)\subset X_L$.
Since $\pi$ is finite onto its image, dimension is preserved under preimage:
$\dim\pi^{-1}(Y)=\dim Y=1$. In particular, every irreducible component of $\pi^{-1}(Y)$ has dimension~$1$.
Since $V\subset\pi^{-1}(Y)$ and $V$ is irreducible, $V$ is contained
in some irreducible component $Z$ of $\pi^{-1}(Y)$.
Since $V\subset Z$ with $\dim V = \dim Z = 1$ and $V$ irreducible,
we conclude $V = Z$.
In particular, $V$ is an irreducible component of $\pi^{-1}(Y)$.

The morphism $\pi\colon X_L\to\mathcal{A}_6$ is a morphism of Shimura
varieties.
By the general theory of Shimura varieties
\cite{Deligne79,EdixhovenYafaev},
a morphism of Shimura varieties that is
finite onto its image has the property that the preimage of a special
subvariety is a finite union of special subvarieties.
Since $\pi$ is finite onto its image, and $Y$ is special, and $V$ is an irreducible component of
$\pi^{-1}(Y)$,  $V$ is itself a special (Shimura) subvariety of $X_L$.
This contradicts Theorem~\ref{thm:mcm9}, which asserts that $V$ is not
contained in any proper Shimura subvariety of~$X_L$.
Therefore $V$ contains only finitely many CM points.

The second assertion follows: by Proposition~\ref{prop:cm}, every
$v\in V\cap\mathcal{W}_K$ is a CM point of~$V$; since $V$ contains
only finitely many CM points, $V\cap\mathcal{W}_K$ is finite.
\end{proof}

An effective version --- bounding the number of CM points on $V$ ---
would require an effective André-Oort theorem for $\mathcal{A}_6$,
which is not currently available in the literature.
The Colmez-based strategy of \cite{Tsimerman} gives polynomial
lower bounds on Galois orbits but does not directly bound the number
of CM points on a specific non-Shimura curve.
The Hecke-search approach of \S\ref{sec:program} provides an
alternative, computationally effective route to deciding whether
$V\cap\W_K\neq\emptyset$, without requiring effective André-Oort.

\begin{remark}\label{rem:bku-perspective}
The Baldi-Klingler-Ullmo framework \cite{BKU23} illuminates the
structure of~$V\cap\W_K$ as a \emph{super-atypical} intersection,
though the finiteness proved in Theorem~\ref{thm:finite} does not
follow from their main algebraicity theorem \cite[Thm.~1.5]{BKU23},\footnote{The algebraicity theorem represents a major advance in the
understanding of the distribution of the Hodge locus, especially for
higher-level variations of Hodge structure.  It strengthens the
classical result of Cattani--Deligne--Kaplan by providing a finiteness
criterion and by clarifying the behavior of the Hodge locus in terms of
typical and atypical intersections.}
which requires the variation of Hodge structures to have level
at least~$3$.
The variation $\mathcal{H}\to V$ has algebraic monodromy group
$\Res_{L/\Q}\SL_2$ (which equals $\MT(A_v)$ for generic $v$ since $\Res_{L/\Q}\SL_2$
is semisimple; see Corollary~\ref{cor:mt}), whose Lie algebra
$\Res_{L/\Q}\mathfrak{sl}_2$ decomposes over~$\R$
as $\bigoplus_{i=1}^{6}\mathfrak{sl}_2(\R)$, each factor carrying
a Hodge structure of weights $\{-1,0,1\}$ via the adjoint action;
hence the level of $\mathcal{H}$ is~$1$ in the sense of
\cite[Def.~4.13--4.15]{BKU23}.
(Concretely: $H^1(A_v,\Q)$ has Hodge numbers $h^{1,0}=h^{0,1}=6$, so
$\max|p-q| = |1-0| = 1$, confirming level~$1$.)

Instead, the relevance of \cite{BKU23} here is descriptive:
the points of $V\cap\W_K$ are super-atypical intersections of zero
period dimension (CM points), and \cite[Thm.~3.1]{BKU23}
governs the closure of the atypical locus of \emph{positive}
period dimension.
The finiteness of the zero-period-dimension super-atypical locus
on a non-Shimura curve follows from the Andr\'e-Oort theorem
(Theorem~\ref{thm:ao}), as in the proof of
Theorem~\ref{thm:finite} above.
\end{remark}

\begin{corollary}\label{cor:isogeny}
  The abelian sixfolds $\{A_v : v\in V\cap\W_K\}$ fall into only
  finitely many isogeny classes over~$\overline\Q$.
\end{corollary}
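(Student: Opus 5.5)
The plan is to deduce Corollary~\ref{cor:isogeny} directly from Theorem~\ref{thm:finite}, without any height machinery, and then to record why each class has a well-defined isogeny type.

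\emph{Step 1.} By Theorem~\ref{thm:finite}, the set $V\cap\W_K$ is finite. Its image $\{A_v : v\in V\cap\W_K\}$ is therefore a finite set of isomorphism classes of abelian sixfolds, and a finite set of abelian varieties meets only finitely many isogeny classes. This already proves the statement.

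\emph{Step 2.} To make each class explicit, I would invoke Proposition~\ref{prop:cm}: every $A_v$ in the set is of CM type with CM field $M=KL$. By CM theory (Shimura--Taniyama), each such $A_v$ is defined over $\overline\Q$. Its isogeny class over $\overline\Q$ is determined by the CM type $\Phi$ on $M$, since two CM abelian varieties with the same CM pair $(M,\Phi)$ are isogenous.

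\emph{Step 3.} By Remark~\ref{rem:cm-field}, the CM type is $\Phi_{I^+,I^-}$ for one of the $20$ sign-assignments. So the number of isogeny classes is at most $20$, uniformly in the curve $V$ and independent of finiteness. This second route is worth including because it survives even if one only knows that every point of $V\cap\W_K$ is CM.

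\emph{Step 4 (optional).} The introduction mentions a Faltings height route via \cite{vKK23}, \cite{AGHMP} and \cite[Satz~6]{Faltings83}. One would bound $h_F(A_v)$ via the averaged Colmez formula, then apply Faltings' finiteness over a number field of bounded degree.

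\emph{Main obstacle.} The main obstacle is Step~2, specifically checking that the $\overline\Q$-isogeny class depends only on $(M,\Phi)$ and not on the choice of $\OO_M$-lattice or polarization. This is the standard fact that $\C^6/\Phi(\mathfrak a)$ for varying fractional ideals $\mathfrak a$ are all $M$-isogenous. I would cite Shimura \cite{ShimuraCM} for it.
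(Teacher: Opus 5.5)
Your proof is correct, and it differs from the paper's. The paper does not use Theorem~\ref{thm:finite} here. It takes from Proposition~\ref{prop:cm} only that every $A_v$ is CM with the fixed field $M=KL$. It then bounds $h_F(A_v)$ uniformly via \cite[Prop.~7.14]{vKK23} and concludes with Faltings' finiteness theorem \cite[Satz~6]{Faltings83}.

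Your Step~1 shows that, given Theorem~\ref{thm:finite}, the statement is immediate and needs nothing further.

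Your Steps~2--3 are the more informative alternative. Like the paper's argument, they use only the CM property, so they are independent of Andr\'e--Oort and in fact cover every CM point of $\W_K$, not just those on $V$. But they replace heights by an elementary observation. $H_1(A_v,\Q)$ is a one-dimensional $M$-vector space, and its complex structure on $M\otimes_\Q\R$ is dictated by $\Phi$. So $A_v\cong\C^6/\Phi(\Lambda)$ for some lattice $\Lambda\subset M$, and any two lattices in $M$ are commensurable. Two small points:
\begin{itemize}
\item $\Lambda$ need not be a fractional $\OO_M$-ideal, since $\End(A_v)$ may be a non-maximal order. Commensurability is all you use.
\item The argument does not need $\End^0(A_v)=M$.
\end{itemize}

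This route buys an explicit count: at most $20$ classes, in fact at most $10$. Precomposing $\eta$ with complex conjugation turns $\Phi_{I^+,I^-}$ into $\Phi_{I^-,I^+}$ without changing $A_v$.

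It also avoids the field-of-definition issue the Faltings route must handle. The paper asserts that each $A_{v_0}$ is defined, up to isogeny, over the reflex field $M^*$. That is more than CM theory gives in general: for an imaginary quadratic $K$ with $h_K>1$, no elliptic curve with CM by $K$ has a model over $K$, even up to isogeny.

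What the height route buys in exchange is a quantitative bound on $h_F$, which the corollary as stated does not need.
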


\begin{proof}
By Proposition~\ref{prop:cm}, each fiber $A_v$ with $v\in V\cap\W_K$
is a CM abelian sixfold with CM field $M = KL$, a degree-$12$
CM field fixed throughout.
All such $A_v$ are defined over a number field and have
$\End^0(A_v) = M$.
By \cite[\S7.2, Prop.~7.14]{vKK23}, which bounds the height of the CM
type $\Phi$ of $A_v$ in terms of $\disc(M/\Q)$, and thereby (via
\cite[Thm.~7.1]{vKK23}) the Faltings height $h_F(A_v)$,
the heights $h_F(A_v)$ are uniformly bounded by an effective constant
depending only on $\dim A_v = 6$ and $\disc(M/\Q)$.
By Faltings' finiteness theorem \cite[Satz~6]{Faltings83},
a uniform bound on $h_F(A_v)$ implies that $\{A_v : v\in V\cap\W_K\}$
falls into only finitely many isogeny classes.
\end{proof}

\begin{remark}\label{rem:faltings-explicit}
Combining \cite[Prop.~7.14, Thm.~7.1]{vKK23} with the discriminant
formula $\mathrm{rad}(\disc(M/\Q))\mid 2\cdot 3\cdot 7\cdot p_d$
(Proposition~\ref{prop:M}), one obtains an explicit upper bound
$h_F(A_v)\le C(6,\disc(M/\Q))$; specializing to $g=6$ and
$\mathrm{rad}(\disc)\le 42$ yields the bound
$h_F(A_v) \;\le\; (3g)^{(5g)^2}\,\mathrm{rad}(\disc(M/\Q))^{5g}
\;=\; 18^{900}\cdot 42^{30}$
cited in the introduction.
Note that \cite[Thm.~7.1]{vKK23} is the effective Shafarevich
theorem for $\GL_2$-type abelian varieties; that result does not
apply here since $A_v$ has $\End^0(A_v) = M$ of degree~$12$
over~$\Q$ and is not of $\GL_2$-type.
Proposition~7.14 of loc.\,cit.\ handles the CM case directly.
\end{remark}

%------------------------------------------------------------------
\subsection{Transcendence: Wolfart and Bl\'azquez-Sanz--Casale--Freitag--Nagloo}
\label{ss:wolfart}
%------------------------------------------------------------------

The Wolfart theorem \cite{Wolfart83,Wolfart88} provides an
independent approach to the finiteness of $V\cap\W_K$
via the algebraicity of the uniformizing function.

\begin{theorem}[Wolfart {\cite{Wolfart83,Wolfart88}}]\label{thm:wolfart}
Let $\Gamma\subset\mathrm{PSL}_2(\R)$ be a non-arithmetic Fuchsian group
of the first kind, and let $j_\Gamma\colon\HH\to\mathbb{C}$ be the
$\Gamma$-automorphic uniformizing function.
If $z_0\in\HH$ satisfies $j_\Gamma(z_0)\in\overline{\mathbb{Q}}$,
then $z_0$ is a CM point.

In particular, if $F(a,b,c;z)$ is the Gauss hypergeometric function
with rational parameters $a,b,c$ whose monodromy group is a
non-arithmetic Fuchsian group, and
if $z_0\in\overline\Q\setminus\{0,1\}$ satisfies $F(a,b,c;z_0)\in\overline\Q$,
then $z_0$ is a CM value.
\end{theorem}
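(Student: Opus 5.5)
The plan is to follow Wolfart's original route, which combines the period interpretation of hypergeometric functions with W\"ustholz's analytic subgroup theorem. First, a caveat about the hypothesis. Every algebraic point $\lambda_0$ of the base curve has some preimage $z_0$ under $j_\Gamma$, and $\overline{\Q}$-points are dense, so the first assertion cannot hold with only $j_\Gamma(z_0)\in\overline\Q$ assumed. The statement I will actually prove is the classical two-sided one: if $z_0\in\overline\Q$ (in a suitable algebraic normalization of $\HH$, given by the ratio of periods) \emph{and} $j_\Gamma(z_0)\in\overline\Q$, then the fiber over $z_0$ has CM. The second form in the theorem is exactly this, after the substitution $\lambda_0=j_\Gamma(z_0)$. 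So I will prove the hypergeometric version and deduce the automorphic one.

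\emph{Step 1 (periods).} For rational $a,b,c$ with common denominator $N$, I would realize $F(a,b,c;\lambda)$ and a second solution of the hypergeometric equation as periods $\int \omega$ of a holomorphic differential of the first kind on the curve $C_\lambda\colon y^N=x^{A}(x-1)^{B}(x-\lambda)^{C}$. More precisely, they are periods on the primitive eigen-part $T_\lambda$ of its Jacobian under $\mu_N$, which is an abelian variety with multiplication by $\Q(\zeta_N)$. The Schwarz map $\lambda\mapsto \omega_2/\omega_1$ is then the inverse of $j_\Gamma$ up to normalization, where $\Gamma$ is the projective monodromy group. For algebraic $\lambda_0$, the variety $T_{\lambda_0}$ and the differentials are defined over $\overline\Q$.

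\emph{Step 2 (analytic subgroup theorem).} Suppose $F(\lambda_0)\in\overline\Q$, or equivalently, in the automorphic form, that the period ratio $z_0$ is algebraic. Then there is a nontrivial $\overline\Q$-linear relation among the periods of $\omega$ and of its Galois/eigen-conjugates. By W\"ustholz's theorem such a relation forces a nontrivial splitting: the relation is cut out by a proper abelian subvariety of $T_{\lambda_0}$, giving extra endomorphisms. Using the Shiga--Wolfart refinement, one sees that the relevant simple factors must have CM. Translating back, $z_0$ is a CM point of $\HH$, that is, a fixed point of an element of the commensurator acting through the enlarged endomorphism algebra.

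\emph{Step 3 (non-arithmeticity).} Non-arithmeticity of $\Gamma$ is used to make this conclusion meaningful rather than automatic. In the non-arithmetic case, the Zariski density of monodromy (as in Theorem~\ref{thm:density}) shows that a generic fiber has no extra endomorphisms. Hence the exceptional set where both coordinates are algebraic is genuinely the CM set, and not all of $\overline\Q$.

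\emph{Main obstacle.} I expect Step 2 to be the hardest part: extracting from a single $\overline\Q$-relation between two periods the full CM structure, rather than merely some proper splitting. My first attempt would be to apply the analytic subgroup theorem to the product of all Galois conjugates $T_{\lambda_0}^{\sigma}$ simultaneously, following Shiga--Wolfart, so that the relation forces the Mumford--Tate group to be a torus.
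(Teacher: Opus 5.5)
The paper gives no proof of this statement; it only cites \cite{Wolfart83,Wolfart88}. You are right that the first assertion is false as printed, but density is not the reason, since CM points are dense on Shimura curves too. The sharper reason is that a nonconstant automorphic function takes every algebraic value, so the assertion would give the paper's curve $V$ infinitely many CM points, contradicting Theorem~\ref{thm:finite}. The two-sided form you substitute, with $z_0$ and $j_\Gamma(z_0)$ both algebraic in the period normalization, is the version actually available for triangle groups via modular embeddings (Cohen--Wolfart \cite{CW90}, Shiga--Wolfart).

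\textbf{The gap.} Your plan rests on the claim that the hypergeometric assertion ``is exactly'' this two-sided statement after setting $\lambda_0=j_\Gamma(z_0)$; the same claim reappears as the ``or equivalently'' at the start of Step~2. This is false.
\begin{itemize}
\item The point $z_0$ is the Schwarz ratio of two periods of the \emph{same} differential on $T_{\lambda_0}$.
\item By Euler's integral, $B(b,c-b)\,F(a,b,c;\lambda_0)$ is, up to an algebraic factor, a single period $\int_\gamma\omega_{\lambda_0}$. Your Step~1 drops the normalizing Beta value.
\item That Beta value is a period of the fibre at $\lambda=0$, a quotient of a Fermat curve, hence of a CM abelian variety $T_0$ (or of $\mathbb{G}_m$ in degenerate cases).
\end{itemize}
So $F(\lambda_0)\in\overline{\Q}$ is a linear relation between periods of $T_{\lambda_0}$ and of $T_0$, not a relation internal to $T_{\lambda_0}$.

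The two conditions genuinely differ. Take $F(\tfrac12,\tfrac12;1;\lambda)=\tfrac{2}{\pi}K(\sqrt{\lambda})$. The ratio $iK'/K$ is algebraic at each of the infinitely many CM values of $\lambda$. But $F(\lambda_0)$ is transcendental for every algebraic $0<|\lambda_0|<1$, because a nonzero period of an elliptic curve over $\overline{\Q}$ is never an algebraic multiple of $\pi$.

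\textbf{Consequences.} Neither version can be deduced from the other; each needs its own application of the analytic subgroup theorem.
\begin{itemize}
\item \emph{Hypergeometric form.} Apply the theorem on $T_{\lambda_0}\times T_0$. A nonzero algebraic value $F(\lambda_0)$ then forces a common isogeny factor of $T_{\lambda_0}$ and $T_0$, and CM is inherited from the Fermat side. This comparison is the missing idea in your Step~2. Once it is in place, the ``main obstacle'' you anticipate, extracting full CM from a single relation, does not arise.
\item \emph{Two-sided form.} Apply the theorem on $T_{\lambda_0}\times T_{\lambda_0}$, not to a subvariety of $T_{\lambda_0}$. When $T_{\lambda_0}$ is simple, the abelian subvariety produced is the graph of some $e\in\End^0(T_{\lambda_0})$ with $e^*\omega=z_0\,\omega$, a non-generic endomorphism. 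Upgrading this to full CM is where the Shiga--Wolfart criterion and the other components $f_i(z_0)$ of the modular embedding enter.
\end{itemize}

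\textbf{Step~3.} This step does no work. Neither implication uses non-arithmeticity; Schneider's theorem for $\SL_2(\Z)$ is the arithmetic prototype of the two-sided form. That hypothesis matters only for finiteness of the exceptional set.
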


\noindent
The paper \cite{Wolfart83} establishes the automorphic-form version
via an arithmetic eigenvalue argument; \cite{Wolfart88} gives the
hypergeometric specialization with explicit CM characterization.

The non-arithmeticity hypothesis holds for $\Delta(14,21,42)$:
it does not appear in the Takeuchi list of $76$ arithmetic triangle
groups \cite{Tak77}.

Wolfart's theorem applies to each coordinate function $f_i$
individually.
The simultaneous Ax--Schanuel statement covering all six
Galois conjugates is provided by the following theorem, in which
the geodesic independence hypothesis replaces the non-arithmeticity
hypothesis of Wolfart.

\begin{theorem}[Bl\'azquez-Sanz--Casale--Freitag--Nagloo
{\cite[Theorem~D]{BCFN}}]\label{thm:bcfn}
Let $\Gamma\subset\mathrm{PSL}_2(\R)$ be a Fuchsian group of the
first kind with uniformizing function~$j_\Gamma$.
For geodesically independent points
$\widehat{t}_1,\ldots,\widehat{t}_n\in\HH$ we have
\[
  \trdeg_\C\,\C\bigl(
    \widehat{t}_i,\;j_\Gamma(\widehat{t}_i),\;j_\Gamma'(\widehat{t}_i),\;
    j_\Gamma''(\widehat{t}_i): 1\le i\le n
  \bigr)
  \;\ge\; 3n + \mathrm{rank}(\partial_i\widehat{t}_j).
\]
\end{theorem}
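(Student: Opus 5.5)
The plan is to prove the inequality in its differential-algebraic form and then specialize, following the route of \cite{BCFN} rather than an o-minimal counting argument. First, rewrite the transcendence statement as a statement about the \emph{Schwarzian equation}. The uniformizing function $j_\Gamma$ satisfies a third-order algebraic differential equation
\[
  S(y) + R_\Gamma(y)\,(y')^2 = 0,
\]
where $S$ is the Schwarzian derivative and $R_\Gamma$ is a rational function determined by the orbifold structure of $\HH/\Gamma$ (its cone points and the accessory parameters). Under this rewriting, the triple $(j_\Gamma, j_\Gamma', j_\Gamma'')$ at $\widehat t_i$ is a generic point of the order-$3$ equation. Assume the $\widehat t_i$ are functions of parameters with Jacobian $(\partial_i\widehat t_j)$; by Seidenberg's embedding theorem we may work in a differential field of meromorphic germs. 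The claimed bound $3n+\mathrm{rank}$ then becomes the assertion that the only algebraic relations among $n$ solutions of this equation, composed with the $\widehat t_i$, are those forced by the $\mathrm{PSL}_2$-action: the $\widehat t_i$ are linked by $\widehat t_j=g\cdot\widehat t_i$ with $g\in\mathrm{Comm}(\Gamma)$. This is exactly what geodesic independence excludes.

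Second, compute the differential Galois groupoid (Malgrange pseudogroup) of the equation. Since $\Gamma$ is of the first kind, its Zariski closure in $\mathrm{PSL}_2(\C)$ is the full group, so the Galois groupoid is as large as possible. Concretely, it is the groupoid of all third-order jets of $\mathrm{PSL}_2$-transformations on the $\widehat t$-variable, transported by $j_\Gamma$. From this one deduces that the solution set is strongly minimal and geometrically trivial modulo the commensurator. In other words, any nontrivial algebraic relation between solutions $j_\Gamma(\widehat t_1),\dots,j_\Gamma(\widehat t_n)$ and their derivatives must come from a binary relation $j_\Gamma(\widehat t_j)=P(j_\Gamma(\widehat t_i))$. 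Via the uniformization, such a relation corresponds to a Möbius map $g$ with $\widehat t_j=g\widehat t_i$, i.e.\ a geodesic dependence.

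Third, combine these to get the Ax--Schanuel inequality. Argue by induction on $n$ with a fibration argument. The case $n=1$ is the statement that $j_\Gamma,j_\Gamma',j_\Gamma''$ are algebraically independent over $\C(\widehat t)$ whenever $\widehat t$ is nonconstant; this follows from minimality of the Galois groupoid. For the inductive step, suppose the transcendence degree drops. Then the locus cut out is a proper algebraic subvariety containing a germ of the graph of $(j_\Gamma,\ldots)\circ(\widehat t_1,\dots,\widehat t_n)$. The groupoid computation and trivial-geometry dichotomy force either a binary geodesic relation, which is excluded, or a deficiency already visible for fewer points, which is excluded by induction. The extra $\mathrm{rank}(\partial_i\widehat t_j)$ term accounts for the $\widehat t_i$ themselves, using the standard Ax-type inequality for the field $\C(\widehat t_i)$.

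The main obstacle is the second step: establishing that the Galois groupoid is full and that its orbits yield geometric triviality for an \emph{arbitrary} Fuchsian group of the first kind, not just an arithmetic or triangle group. My first attempt would be to treat Zariski-density of $\Gamma$ in $\mathrm{PSL}_2$ as the key input. I would then invoke Casale's classification of subgroupoids of the $\mathrm{PSL}_2$ jet groupoid to rule out any intermediate groupoid, so that only the commensurator-induced relations survive.
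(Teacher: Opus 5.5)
Your Steps~1--2 are, in outline, the Casale--Freitag--Nagloo picture: full Galois groupoid of the Schwarzian equation, strong minimality, and triviality modulo $\mathrm{Comm}(\Gamma)$. That machinery yields Ax--Lindemann--Weierstrass with derivatives, not Ax--Schanuel. The passage to Ax--Schanuel in Step~3 is a genuine gap, and it is already latent in your reformulation in Step~1.

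Once the $\widehat t_i$ are, as you rightly assume, nonconstant germs in parameters $s$, the composites $y_i:=j_\Gamma\circ\widehat t_i$ are not solutions of the autonomous equation for a derivation $\partial$ in $s$. By the chain rule $S(f\circ g)=(S(f)\circ g)\,(g')^2+S(g)$,
\[
  S(y_i)+R_\Gamma(y_i)\,(y_i')^2 \;=\; S(\widehat t_i).
\]
So $y_i$ lies in the fibre over $a_i=S(\widehat t_i)$ of a non-autonomous family, and $a_i=0$ only when $\widehat t_i$ is M\"obius in $s$.

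Even granting strong minimality and triviality of every fibre, the dichotomy ``generic or algebraic over the base'' is relative to a base differential field. The natural base $\C\langle\widehat t_1,\dots,\widehat t_n\rangle$ can already contain $j_\Gamma$-values. For example, take $\widehat t_1=s$ and $\widehat t_2=j_\Gamma(s)+c$; these are geodesically independent, and $y_1$ lies in the base. Whether $y_2$ is generic over that base is then exactly the Ax--Schanuel content. Here that content is the claim that the seven functions $s,\,j_\Gamma(s),\,j_\Gamma'(s),\,j_\Gamma''(s)$ and $j_\Gamma,j_\Gamma',j_\Gamma''$ evaluated at $j_\Gamma(s)+c$ are algebraically independent, and strong minimality does not decide it.

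Consequently:
\begin{itemize}
\item Your claim that ``the only algebraic relations \dots\ are those forced by $\widehat t_j=g\widehat t_i$'' is the theorem itself, not a consequence of Step~2.
\item The alternative ``binary geodesic relation, or a deficiency for fewer points'' in your induction is asserted rather than proved.
\item There is no standalone ``Ax-type inequality for $\C(\widehat t_i)$'' that adds $\mathrm{rank}(\partial_i\widehat t_j)$. Geometrically the statement is $\dim U\le\dim V-3n$ for a component $U$ of the intersection of an algebraic $V$ with the graph of $(t_i)_i\mapsto(t_i,j_\Gamma(t_i),j_\Gamma'(t_i),j_\Gamma''(t_i))_i$. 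The rank term is $\dim U$ itself, so bounding it is the whole content.
\end{itemize}

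The paper gives no proof of its own, but it points to where the missing ingredient lives. Theorem~D of \cite{BCFN} is deduced from their Theorem~A, an Ax--Schanuel theorem for flat $G$-principal connections with sparse Galois group. Roughly, the argument runs as follows.
\begin{itemize}
\item The Schwarzian equation is encoded as a flat $\mathrm{PSL}_2$-principal connection on the bundle of $2$-jets of local inverses of $j_\Gamma$.
\item An algebraic relation as above becomes an atypical intersection of an algebraic subvariety of the $n$-fold product bundle with a horizontal leaf.
\item Theorem~A forces this to happen over a subvariety of the base on which the Galois group of the product connection is a proper algebraic subgroup of $\mathrm{PSL}_2^n$.
\item Goursat's lemma, together with the simplicity of $\mathrm{PSL}_2$ and the fact that its algebraic automorphisms are inner, reduces this to either a constant $\widehat t_i$ or a relation $\widehat t_j=h\,\widehat t_i$.
\item Compatibility with the $\Gamma$-periodicity of the leaves forces $h\in\mathrm{Comm}(\Gamma)$.
\end{itemize}
Your Step~2 (full Galois group for a single copy) is an input to this argument but cannot replace it. The other known route is the o-minimal one of Bakker--Tsimerman \cite{BT19}.
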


\noindent
Theorem~D of \cite{BCFN} is derived from their Ax--Schanuel theorem
for $G$-principal flat connections with sparse Galois group
(Theorem~A of \emph{ibid.}); an alternative approach via o-minimality
is due to Bakker--Tsimerman \cite{BT19}. 

Applied to our setting: the modular embedding lifts to
\[
  \widetilde{f}_0\colon\HH\longrightarrow(\pm\HH)^6,\qquad
  z\longmapsto(f_1(z),\ldots,f_6(z)),
\]
where each $f_i$ transforms under $\sigma_i(\Delta_0)$
(Observation~\ref{obs:ode-sec5}).
If $z_0\in V$ satisfies $j_\Gamma(z_0)\in\overline\Q$, then by
Wolfart's theorem (Theorem~\ref{thm:wolfart}) $z_0$ is a CM point
(recall $\Gamma=\Delta_0$ is non-arithmetic by~\cite{Tak77}).
In BCFN's terminology, geodesic dependence means the formal
parameterizations $\widehat t_i = f_i(z)$ satisfy a nontrivial algebraic
relation that forces the image of $\widetilde f_0$ into a proper totally
geodesic subvariety of $(\pm\HH)^6$, which descends to a proper
Shimura subvariety of $X_L$.
Since $\Delta_0$ is non-arithmetic, so are all its Galois conjugates
$\sigma_i(\Delta_0)$; Wolfart's theorem (Theorem~\ref{thm:wolfart})
then implies that $z_0$ is a CM point.
Thus we recover Theorem~\ref{thm:finite} by a differential-algebraic
argument independent of Theorem~\ref{thm:ao}.

%------------------------------------------------------------------
\subsection{Examples}\label{ss:examples}
%------------------------------------------------------------------

We illustrate the finiteness theorem and the CM structure
by working out three explicit cases $K = \Q(\sqrt{-d})$
for $d = 1, 3, 7$.
In each case $L\cap K = \Q$ (since $L$ is totally real),
so $M = KL$ has degree $[M:\Q] = 12$.

%--- Example 1 --------------------------------------------------
\begin{example}[$d=1$, $K = \Q(i)$]\label{ex:d1}
Take $K = \Q(i) = \Q(\sqrt{-1})$.
Since $4\nmid 42$, the field $\Q(i)$ is \emph{not} a subfield of
$\Q(\zeta_{42})$, so the CM field
\[
  M \;=\; L(i) \;=\; \Q\!\left(\cos\tfrac{\pi}{21},\, i\right)
\]
is a degree-$12$ CM field distinct from $\Q(\zeta_{42})$.
Its Galois group is
\[
  \Gal(M/\Q)
  \;\cong\; \Gal(L/\Q)\times\Gal(\Q(i)/\Q)
  \;\cong\; \Z/2\Z\times\Z/2\Z\times\Z/3\Z.
\]
The primes ramified in $M$ are exactly those ramified in $L$
or in $K$: giving $\mathrm{rad}(\operatorname{disc}(M/\Q))\mid 42$
(the primes $3$ and $7$ from $L$, and the prime $2$ from both $L$ and
$\Q(i)$, introduce no prime outside $\{2,3,7\}$).
By Proposition~\ref{prop:cm} and Corollary~\ref{cor:isogeny},
any fiber $A_{v_0}$ over $V\cap\W_K$ satisfies
\[
  h_F(A_{v_0})
  \;\le\; 18^{900}\cdot 42^{30}.
\]
The $64$ CM types of $M = L(i)$ form $32$ conjugate pairs $(\Phi,\overline\Phi)$;
which of these are compatible with both the
$\OO_L$-RM structure and the Weil condition for $K = \Q(i)$
requires computing the reflex field of each type.
\end{example}

%--- Example 2 --------------------------------------------------
\begin{example}[$d=3$, $K = \Q(\sqrt{-3})$]\label{ex:d3}
Take $K = \Q(\sqrt{-3}) = \Q(\zeta_3)$.
Since $3\mid 42$, we have $\Q(\zeta_3)\subset\Q(\zeta_{42})$,
and since $L = \Q(\zeta_{42})^+$ has degree $6$:
\[
  M \;=\; KL \;=\; \Q(\zeta_{42}),
  \qquad [M:\Q] = 12.
\]
The Galois group is
\[
  \Gal(M/\Q) \;\cong\; (\Z/42\Z)^\times
  \;\cong\; \Z/2\Z\times\Z/2\Z\times\Z/3\Z,
\]
of exponent $6$.
The discriminant satisfies
$\mathrm{rad}(\operatorname{disc}(\Q(\zeta_{42})/\Q)) = 2\cdot 3\cdot 7 = 42$,
and the height bound is the same as in Example~\ref{ex:d1}:
\[
  h_F(A_{v_0}) \;\le\; 18^{900}\cdot 42^{30}.
\]

Since $M = \Q(\zeta_{42})$ is the same field for $d=3$ and $d=7$
(by a conductor calculation; see Example~\ref{ex:d7}), these two cases provide
\emph{distinct imaginary quadratic subfields} $K\subset M$
inducing different Weil structures on fibers with the same CM field.
Whether a single fiber $A_{v_0}$ can simultaneously satisfy the
Weil condition for both $K=\Q(\sqrt{-3})$ and $K=\Q(\sqrt{-7})$
is a condition on the CM type of $M$, since both embeddings
$\Q(\sqrt{-3})\hookrightarrow M$ and $\Q(\sqrt{-7})\hookrightarrow M$
exist within $M = \Q(\zeta_{42})$ without enlarging $\End^0$.
\end{example}

%--- Example 3 --------------------------------------------------
\begin{example}[$d=7$, $K = \Q(\sqrt{-7})$]\label{ex:d7}
Take $K = \Q(\sqrt{-7})$.
Since $\Q(\sqrt{-7})\subset\Q(\zeta_7)\subset\Q(\zeta_{42})$,
we again have $M = KL = \Q(\zeta_{42})$ and
$\Gal(M/\Q)\cong(\Z/42\Z)^\times\cong\Z/2\Z\times\Z/2\Z\times\Z/3\Z$,
identical to Example~\ref{ex:d3} as abstract groups.
The height bound is again $h_F(A_{v_0})\le 18^{900}\cdot 42^{30}$.

Despite sharing the CM field $M = \Q(\zeta_{42})$ with the $d=3$ case,
the \emph{Weil locus} $\W_K\subset X_L$ for $K=\Q(\sqrt{-7})$
is a different algebraic subvariety of $X_L$ from that for
$K=\Q(\sqrt{-3})$: the Weil eigenspace condition
(eigenvalues $\pm i\sqrt{d}$ with multiplicity~$3$ on $H^{1,0}$)
depends on~$d$, so the two loci are defined by different tensors
even though both Weil embeddings land in the same $M$.
Both $\Q(\sqrt{-3})$ and $\Q(\sqrt{-7})$ are subfields of
$M = \Q(\zeta_{42})$, so a fiber $A_{v_0}$ could in principle
admit both Weil structures simultaneously without $\End^0$
exceeding $M$; whether such a simultaneous condition is
compatible with a given CM type of $M$ is an open question
(cf.\ the open question on CM types in \S\ref{sec:cm-type}).
\end{example}

\begin{remark}\label{rem:cyclotomic-coincidence}
The coincidence $KL = \Q(\zeta_{42})$ for both $d=3$ and $d=7$
reflects a conductor calculation:
$\Q(\sqrt{-d})\subset\Q(\zeta_{42})$ if and only if
$\mathrm{cond}(\Q(\sqrt{-d}))\mid 42$.
Using the standard formula
($\mathrm{cond}(\Q(\sqrt{D})) = |D|$ if $D\equiv 1\pmod{4}$,
else $4|D|$), the only squarefree $d>0$ with this property are
$d=3$ (conductor $3\mid 42$) and $d=7$ (conductor $7\mid 42$).
In contrast, $d=1$ gives conductor $4\nmid 42$
and $d=21$ gives conductor $84\nmid 42$, so
$\Q(i)\not\subset\Q(\zeta_{42})$ and
$\Q(\sqrt{-21})\not\subset\Q(\zeta_{42})$.

For $d\in\{3,7\}$, the CM field $M=KL = \Q(\zeta_{42})$
and $\mathrm{rad}(\operatorname{disc}(M/\Q)) = 42$, giving the
uniform bound $h_F(A_{v_0})\le 18^{900}\cdot 42^{30}$.
For all other $d$, the CM field $M = KL$ is a degree-$12$
extension distinct from $\Q(\zeta_{42})$, and
$\mathrm{rad}(\operatorname{disc}(M/\Q))$ may involve primes outside $\{2,3,7\}$.
\end{remark}

%==================================================================
\section{The new CM type}\label{sec:cm-type}
%==================================================================

The finiteness of $V\cap\mathcal{W}_K$ established in \S\ref{sec:finite}
reduces the Hodge conjecture for the fibers $\{A_{v_0}: v_0\in V\cap\mathcal{W}_K\}$
to a finite-but-open problem about specific CM abelian sixfolds.
This section analyzes the arithmetic structure of those fibers:
the CM field $M=L(\sqrt{-d})$ of degree $12$ (Proposition~\ref{prop:M}),
its $2^6=64$ CM types and the $20$ among them that are compatible with
both the $\OO_L$-real-multiplication and the Weil eigenspace condition
(Proposition~\ref{prop:cmtypes}), and the reasons why neither Markman's
algebraicity theorem nor any other known method yields algebraicity of
the Hodge--Weil classes on $A_{v_0}$ (\S\ref{ss:markman-fails}--\ref{ss:hc-avzero}).
The central obstruction is a \emph{triple incompatibility}: the fibers
$A_{v_0}$ are CM (hence isolated in every positive-dimensional deformation
space), their Hodge--Weil classes arise from a degree-$12$ CM action rather
than from any geometric $X\times\widehat X$ construction, and the discriminant
of the $K$-Hermitian form on $H^1(A_{v_0},\Q)$ is not prescribed by the
intersection conditions defining $V\cap\mathcal{W}_K$.
Any proof of algebraicity of $\widehat{HW}(A_{v_0})$ would therefore
constitute a genuinely new instance of the Hodge conjecture.

\subsection{The degree-12 CM field}\label{ss:cmfield}

By Proposition~\ref{prop:cm}, any $v_0\in V\cap\W_K$ yields a CM
abelian sixfold $A_{v_0}$ with CM field $M = L(\sqrt{-d})$.

\begin{proposition}\label{prop:M}
$M = L(\sqrt{-d})$ is a CM field of degree~$12$ over $\Q$,
with totally real subfield $L = \Q(\cos\tfrac{\pi}{21})$,
$[M:L]=2$, and
\[
  \Gal(M/\Q) \;\cong\; \Z/2\Z\times\Z/2\Z\times\Z/3\Z,
\]
a non-cyclic group of order~$12$ and exponent~$6$.\footnote{The group $\Z/2\Z\times\Z/2\Z\times\Z/3\Z$ is the unique non-cyclic
abelian group of order $12$ and exponent $6$;
it is isomorphic to $\Z/6\Z\times\Z/2\Z$ as abstract groups.
It should not be confused with $\Z/12\Z$ (cyclic, exponent $12$)
or with $A_4$ (non-abelian, order $12$).
For $d\in\{3,7\}$ the field $M = \Q(\zeta_{42})$ has Galois group
$(\Z/42\Z)^\times \cong \Z/2\Z\times\Z/2\Z\times\Z/3\Z$, confirming
the isomorphism type.}
The primes ramifying in $M/\Q$ divide $\mathrm{rad}(\disc(M/\Q))$,
which satisfies $\mathrm{rad}(\disc(M/\Q))\mid 2\cdot 3\cdot 7\cdot p_d$,
where $p_d$ denotes the radical of~$d$.
\end{proposition}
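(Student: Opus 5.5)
The plan is to treat $M$ as the compositum of two Galois extensions of $\Q$ with trivial intersection, and read off every assertion from that description.

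\emph{Step 1: degree and CM structure.} As observed at the start of \S\ref{sec:weil}, $L$ is totally real and $K=\Q(\sqrt{-d})$ is imaginary, so $K\cap L=\Q$. Since $K$ and $L$ are both Galois over $\Q$, linear disjointness gives
\[
[M:\Q]=[K:\Q][L:\Q]=12,\qquad [M:L]=2 .
\]
Every embedding $M\hookrightarrow\C$ restricts to a real embedding on $L$ and to a non-real embedding on $K$. Hence $M$ is a totally imaginary quadratic extension of the totally real field $L$, which is the definition of a CM field with maximal totally real subfield $L$.

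\emph{Step 2: Galois group.} As a compositum of Galois extensions, $M/\Q$ is Galois. Restriction gives an injection $\Gal(M/\Q)\hookrightarrow\Gal(L/\Q)\times\Gal(K/\Q)$. This injection is an isomorphism because $K\cap L=\Q$, or simply by comparing orders ($12=6\cdot 2$). Using $\Gal(L/\Q)\cong\Z/6\Z$ from \S\ref{subsec:field}, we get
\[
\Gal(M/\Q)\cong\Z/6\Z\times\Z/2\Z\cong\Z/2\times\Z/2\times\Z/3 .
\]
This group is non-cyclic, since it has two independent elements of order $2$, and its exponent is $\mathrm{lcm}(6,2)=6$.

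\emph{Step 3: ramification.} A prime $p$ ramifies in a compositum $KL$ only if it ramifies in $K$ or in $L$. Equivalently, $\disc(M)$ divides $\disc(K)^{6}\disc(L)^{2}$, so it suffices to control the radicals of the two factors.

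For $L$, use $\zeta_{42}=-\zeta_{21}^{11}$, which shows $\Q(\zeta_{42})=\Q(\zeta_{21})$. Hence $L=\Q(\zeta_{21})^+$ is unramified outside $\{3,7\}$. For $K$, the discriminant is $-d$ or $-4d$, so the ramified primes divide $2p_d$. Combining the two gives
\[
\mathrm{rad}(\disc(M/\Q))\mid 2\cdot3\cdot7\cdot p_d .
\]

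\emph{Main obstacle.} There is no real difficulty; the one point needing care is Step 2. One must use that both factors are Galois and that their intersection is trivial, so that the restriction map is surjective and not merely injective. The rest is bookkeeping with standard facts about cyclotomic and quadratic discriminants.
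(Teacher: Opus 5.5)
Your proposal is correct and follows essentially the same route as the paper. Both arguments view $M$ as the compositum of $L$ and $K$ with $K\cap L=\Q$, deduce the CM property and $\Gal(M/\Q)\cong\Gal(L/\Q)\times\Gal(K/\Q)$, and bound ramification by the primes ramified in the two factors. The only difference is in the bookkeeping of the prime $2$: the paper bounds ramification in $L$ by $\{2,3,7\}$ via $\Q(\zeta_{42})$, whereas you note $L=\Q(\zeta_{21})^+$, so $2$ can enter only through $\disc(K)\in\{-d,-4d\}$, and the final bound $\mathrm{rad}(\disc(M/\Q))\mid 2\cdot 3\cdot 7\cdot p_d$ is the same.
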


\begin{proof}
Since $L$ is totally real and $\sqrt{-d}\notin L$,
the extension $M = L(\sqrt{-d})$ is totally imaginary and
quadratic over $L$: a CM field.
The isomorphism $\Gal(M/\Q)\cong\Gal(L/\Q)\times\Gal(\Q(\sqrt{-d})/\Q)$
holds because $L\cap\Q(\sqrt{-d})=\Q$ (as $L$ is totally real).
Since $L = \Q(\zeta_{42})^+$ (because $\cos(\pi/21) = (\zeta_{42}+\zeta_{42}^{-1})/2$
and $[\Q(\zeta_{42})^+:\Q]=6$), we have
$\Gal(L/\Q)\cong(\Z/42\Z)^\times/\{\pm1\}\cong\Z/6\Z$, and
$\Gal(\Q(\sqrt{-d})/\Q)\cong\Z/2\Z$, giving the stated group.
The ramification follows from the conductor-discriminant formula:
the primes dividing $\disc(M/\Q)$ are those dividing $\disc(L/\Q)$
or $\disc(\Q(\sqrt{-d})/\Q)$.
Since $L = \Q(\zeta_{42})^+$, only $2,3,7$ ramify in $L/\Q$;
these together with the prime divisors of $d$
give $\mathrm{rad}(\disc(M/\Q))\mid 2\cdot 3\cdot 7\cdot p_d$.
\end{proof}

\begin{remark}\label{rem:conductor}
For $d\in\{3,7\}$ we have $p_d\mid 42$, so $\mathrm{rad}(\disc(M/\Q))\mid 42$
and $M = L(\sqrt{-d}) = \Q(\zeta_{42})$ in both cases
(since $\Q(\sqrt{-3})$ and $\Q(\sqrt{-7})$ each embed in $\Q(\zeta_{42})$,
as their conductors $3$ and $7$ both divide~$42$).
For $d=1$ the conductor of $\Q(i)$ is $4\nmid 42$, so
$M = L(i)\neq\Q(\zeta_{42})$ while still
$\mathrm{rad}(\disc(M/\Q))\mid 42$.
For all $d$ with $p_d\nmid 42$
(e.g.\ $d=5,11,13,\ldots$), the CM field $M$ is genuinely larger
than any subfield of $\Q(\zeta_{42})$ and involves primes outside
$\{2,3,7\}$.
\end{remark}

\subsection{CM types and the Weil compatibility condition}
\label{ss:cmtypes}

Recall that a \emph{CM type} for $M$ is a subset
$\Phi\subset\Hom(M,\C)$ of cardinality $6$ such that
$\Phi\sqcup\overline\Phi = \Hom(M,\C)$;
here $\overline\Phi = \{\overline\phi : \phi\in\Phi\}$ denotes the
complex-conjugate type.
Since $[M:\Q]=12$, there are $2^6 = 64$ CM types of $M$, forming
$32$ conjugate pairs $\{\Phi,\overline\Phi\}$ under complex conjugation.
Each real embedding $\sigma_i\colon L\hookrightarrow\R$
($i=1,\ldots,6$, corresponding to $\cos(\pi/21)\mapsto\cos(k_i\pi/21)$
for $k_i\in\{1,5,11,13,17,19\}$)
extends to exactly two complex embeddings $\phi_i^\pm\colon M\to\C$
defined by $\phi_i^\pm(\sqrt{-d}) = \pm i\sqrt{d}$.
Thus $\Hom(M,\C) = \{\phi_1^+,\phi_1^-,\ldots,\phi_6^+,\phi_6^-\}$
consists of six conjugate pairs, and a CM type $\Phi$ selects
one element from each pair.

\begin{proposition}\label{prop:cmtypes}
All $64$ CM types of $M$ are compatible with the $\OO_L$-RM
structure on $A_{v_0}$.
Of these $64$ CM types, exactly $\binom{6}{3}=20$ satisfy the Weil
eigenspace condition for $K$; no Weil-compatible type is
self-conjugate (since $\Phi_{I^+,I^-} \neq \overline{\Phi_{I^+,I^-}}
= \Phi_{I^-,I^+}$ whenever $I^+\neq I^-$), so these form $10$
conjugate pairs.\footnote{Conjugation sends $\Phi_{I^+,I^-}$ to $\Phi_{I^-,I^+}$ by
exchanging the roles of $\phi_i^+$ and $\phi_i^-$ throughout.
Since $|I^+|=|I^-|=3$, both $\Phi$ and $\overline\Phi$ are Weil-compatible
and $\Phi\neq\overline\Phi$ (they select opposite half-eigenspaces).}
The remaining $44$ types (forming $22$ conjugate pairs) are not
Weil-compatible.
The $64$ types thus partition into $10+22=32$ conjugate pairs,
as expected.
For all $d$, $M/\Q$ is abelian (as a compositum of abelian
extensions), so the reflex field $M^*$ of every CM type satisfies
$M^*\subset M$; equality holds if and only if the stabiliser of
$\Phi$ in $\Gal(M/\Q)$ is trivial.
\end{proposition}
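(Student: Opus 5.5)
The plan is to work directly with the parametrization of $\Hom(M,\C)$ fixed just before the statement. Every complex embedding of $M=L(\sqrt{-d})$ is determined by its restriction to $L$ and its value on $\sqrt{-d}$, so $\Hom(M,\C)=\{\phi_i^\pm\}_{i=1}^6$. A CM type is then the same thing as a function $\epsilon\colon\{1,\ldots,6\}\to\{\pm\}$, with $\Phi_\epsilon=\{\phi_i^{\epsilon(i)}\}$. This gives the count $2^6=64$ immediately. Complex conjugation acts by $\phi_i^\pm\mapsto\phi_i^\mp$, so $\overline{\Phi_\epsilon}=\Phi_{-\epsilon}\neq\Phi_\epsilon$ for every $\epsilon$, and the $64$ types split into $32$ conjugate pairs.

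\emph{RM compatibility.} Restricting any $\Phi_\epsilon$ to $L$ gives each real embedding $\sigma_i$ exactly once. Hence the $L$-action on $H^{1,0}\cong\bigoplus_{\phi\in\Phi}\C_\phi$ decomposes as $\bigoplus_i H^{1,0}_{\sigma_i}$ with each summand one-dimensional. This is exactly the eigenspace decomposition required of an $\OO_L$-RM point of $X_L$ (cf.\ the tangent space decomposition in the proof of Proposition~\ref{prop:codim}), so every type is RM-compatible.

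\emph{Weil count.} The element $\sqrt{-d}$ acts on the line $\C_{\phi_i^{\epsilon(i)}}$ by $\epsilon(i)\,i\sqrt{d}$. Condition~(iii) of Definition~\ref{def:weil} therefore says exactly that $|\epsilon^{-1}(+)|=3$, which is the definition of Weil-compatible in the Notation section. This gives $\binom63=20$ types, namely the $\Phi_{I^+,I^-}$ of Remark~\ref{rem:cm-field}. Conjugation sends $\Phi_{I^+,I^-}$ to $\Phi_{I^-,I^+}$, which is again Weil-compatible and distinct, so the Weil types form $10$ pairs. The remaining $44$ types are closed under conjugation and form $22$ pairs.

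\emph{Reflex field.} $M$ is abelian over $\Q$, being the compositum of the abelian extensions $L$ and $K$ (Proposition~\ref{prop:M}). Every embedding therefore has image $M$, and $\Hom(M,\C)$ is a $\Gal(M/\Q)$-torsor; fixing $\phi_0$, we may write $\Phi=\phi_0\circ S$ with $S\subset\Gal(M/\Q)$. By definition $M^*$ is the fixed field of $\{\tau\in\Gal(\overline\Q/\Q):\tau\Phi=\Phi\}$. Since $\tau$ acts through its image in $\Gal(M/\Q)$ and that group is abelian, $\tau\Phi=\Phi$ iff $\bar\tau S=S$. So $M^*$ is the fixed field in $M$ of $\mathrm{Stab}(\Phi)=\{g\in\Gal(M/\Q):gS=S\}$. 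This gives $M^*\subset M$, with equality iff the stabiliser is trivial, by Galois correspondence.

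The only delicate point is the reflex field. One must ensure the action used for the stabiliser (left composition) matches the usual one, and check that commutativity makes the left and right actions agree; I expect this bookkeeping to be the main step.
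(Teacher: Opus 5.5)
Your proposal is correct and follows the paper's proof essentially step for step. You parametrize $\Hom(M,\C)$ by the pairs $\phi_i^\pm$, read off RM-compatibility from $\Phi|_L=\{\sigma_1,\ldots,\sigma_6\}$, count the Weil types as $\binom{6}{3}$ via the eigenvalue $\phi(\sqrt{-d})=\pm i\sqrt{d}$, and use that $M/\Q$ is abelian to get the reflex-field criterion. The only differences are that you derive the stabiliser description of $M^*$ directly, where the paper simply cites Shimura, and you observe that no CM type is ever self-conjugate, which makes the $10+22=32$ pair count immediate.
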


\begin{proof}
\emph{RM compatibility.}
The restriction of $\phi_i^\pm$ to $L$ is $\sigma_i$.
Since $\Phi$ picks exactly one element from each of the six pairs
$\{\phi_i^+,\phi_i^-\}$, the restriction $\Phi|_L = \{\sigma_1,\ldots,\sigma_6\}$
equals the full set of real embeddings of $L$ for every CM type $\Phi$.
Hence all $64$ types are RM-compatible.

\emph{Weil compatibility.}
By Definition~\ref{def:weil}, the Weil condition for $K = \Q(\sqrt{-d})$
requires the $\C$-linear extension of $\eta(\sqrt{-d})$ to $H^{1,0}(A_{v_0})$
to have eigenvalues $+i\sqrt{d}$ and $-i\sqrt{d}$ each with multiplicity~$3$.
In terms of the CM type: $H^{1,0}(A_{v_0}) \cong \bigoplus_{\phi\in\Phi}\C$
as $M\otimes_\Q\C$-modules, so the eigenvalue of $\sqrt{-d}$ on the
$\phi$-isotypic component is $\phi(\sqrt{-d}) = \pm i\sqrt{d}$.
The multiplicity of $+i\sqrt{d}$ equals $|\{\phi\in\Phi : \phi = \phi_i^+
\text{ for some }i\}|$.
The Weil condition requires this count to be exactly $n=3$.
The number of ways to choose $3$ of the $6$ pairs to contribute
$\phi_i^+$ (and the remaining $3$ to contribute $\phi_i^-$) is
$\binom{6}{3} = 20$.

\emph{Reflex field.}
$M/\Q$ is abelian: it is the compositum $KL$ of two abelian extensions
$K/\Q$ (cyclic of order~$2$) and $L/\Q$ (cyclic of order~$6$) with
$K\cap L=\Q$, giving
$\Gal(M/\Q)\cong\Z/2\Z\times\Z/2\Z\times\Z/3\Z$
(Proposition~\ref{prop:M}), which is abelian.
Since $M/\Q$ is Galois, the reflex field $M^*$ of any CM type $\Phi$
satisfies $M^*\subset M$; equality $M^*=M$ holds if and only if the
stabiliser $\{\,g\in\Gal(M/\Q):g\Phi=\Phi\,\}$ is trivial
\cite[\S8]{ShimuraCM}.
For the Weil-compatible types of Proposition~\ref{prop:cmtypes},
this stabiliser condition is part of the open computation of
Remark~\ref{rem:weil-open}.
\end{proof}

\begin{remark}\label{rem:weil-open}
The $20$ Weil-compatible types of Proposition~\ref{prop:cmtypes}
are the combinatorial upper bound on CM types realised at points of
$V\cap\W_K$: a fiber $A_{v_0}$ over $V\cap\W_K$ must have a
Weil-compatible CM type, but not every Weil-compatible type
need be realised.
The map from points of $V\cap\W_K$ to Weil-compatible CM types is
determined by the geometry of the modular embedding $\widetilde{f}_0$
and the position of $V$ in $X_L$.
\end{remark}

\open{\textbf{.} Classify which of the $20$ Weil-compatible CM types of $M$
are realised as the CM type of some $A_{v_0}$ with $v_0\in V\cap\W_K$.
This is a finite computation requiring:
(i)~explicit CM points $z_0\in\HH$ lying in an imaginary quadratic field;
(ii)~computation of $\End^0(A_{z_0})$ via the modular embedding;
(iii)~verification of the Weil signature
of the resulting $K$-action on $H^1(A_{z_0},\Q)$.}

\subsection{Why Markman's theorem does not apply}
\label{ss:markman-fails}

\begin{theorem}[Markman {\cite[\S8.4, \S9.3]{Markman25}}]\label{thm:markman}
Let $d > 0$ and $K = \Q(\sqrt{-d})$.
For every abelian threefold $X$, the triple $(X\times\widehat X,\eta,h)$
arising from a $K$-secant line of discriminant~$-1$ carries algebraic
Hodge--Weil classes.
More precisely, for every positive integer $d$, there exists a simple
reflexive sheaf $\mathcal{E}$ over $X\times\widehat X$ that deforms
\emph{locally} with $(X\times\widehat X,\eta,h)$ over a neighborhood
in the $9$-dimensional moduli space of polarized abelian sixfolds
of Weil type with discriminant~$-1$; by the connectedness of this
moduli space and the gluing argument of \cite[\S9.3]{Markman25},
$\mathcal{E}$ deforms over the entire space, implying algebraicity
of the Hodge--Weil classes.
\end{theorem}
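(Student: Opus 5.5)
Since Theorem~\ref{thm:markman} is quoted from \cite{Markman25}, the plan is to reconstruct the architecture of Markman's argument and check that each step only uses data intrinsic to the triple $(X\times\widehat X,\eta,h)$. \textbf{Step one} is the construction at a special point. For an abelian threefold $X$, the group $\mathrm{Spin}(V)$ with $V=H^1(X,\Z)\oplus H^1(\widehat X,\Z)$ acts on $H^\bullet(X,\Z)$. A $K$-secant line, a line in $\mathbb{P}(H^{ev}(X,\Q)\otimes K)$ meeting the spinor variety in two conjugate pure spinors, determines a complex structure $\eta\colon K\hookrightarrow\End^0(X\times\widehat X)$ and a polarization $h$ of Weil type with discriminant $-1$. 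I would realize the two pure spinors as Chern characters of objects in $D^b(X)$, for instance shifted line bundles or ideal sheaves twisted by Mukai-type equivalences. Then I would take the image under the Orlov equivalence $\Phi\colon D^b(X\times X)\to D^b(X\times\widehat X)$ of a suitable object built from the diagonal, to obtain a sheaf $\mathcal E$. The required \emph{simple reflexive} property should follow from $\Phi$ being an equivalence together with a slope-stability argument with respect to $h$.

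\textbf{Step two} is to show that $\mathcal E$ deforms \emph{locally} with $(X\times\widehat X,\eta,h)$ over a neighborhood of this point in the $9$-dimensional moduli of polarized Weil-type sixfolds with discriminant $-1$. The plan is to verify that $\kappa(\mathcal E)=\mathrm{ch}(\mathcal E)\exp(-c_1/\mathrm{rk})$ has its Hodge-theoretic obstruction lying in the span of Hodge--Weil classes and $h$-powers, which stay Hodge along the family. One then applies Buchweitz--Flenner semiregularity: one shows that the semiregularity map $\mathrm{Ext}^2_0(\mathcal E,\mathcal E)\to\bigoplus H^{p+2,p}$ is injective. This in turn reduces, via the $\mathrm{Spin}$-equivariance of $\Phi$, to an explicit representation-theoretic computation on $\mathrm{Ext}^\bullet$ of the diagonal-type object. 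I expect this injectivity to be \textbf{the main obstacle}: it is where the particular secant construction is used essentially, and where one must control the higher obstruction spaces rather than just the tangent level. My first attempt would be to compute $\mathrm{Ext}^2_0$ as a $\mathrm{Spin}$-module and match isotypic components against $H^{4,2}\oplus H^{5,3}\oplus H^{6,4}$ of $X\times\widehat X$.

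\textbf{Step three} is globalization. The moduli of triples of Weil type with discriminant $-1$ is connected, being an arithmetic quotient of the Hermitian symmetric domain of $\mathrm{SU}(3,3)$. The locus where some deformation of $\mathcal E$ (or a twist of it by a Hecke-translate) survives is open by Step two. The gluing argument of \cite[\S9.3]{Markman25} shows that this locus is also closed: as one approaches a boundary point inside the moduli space, one uses properness of moduli of $h$-semistable sheaves together with the fact that every point lies on some secant triple. The locus is therefore everything. Finally, the nonzero Weil component of $\kappa_3(\mathcal E)$ is algebraic at every point, and since $W_K$ is $K$-rank one and generated by one such class together with its $\eta(\sqrt{-d})$-translate, all Hodge--Weil classes are algebraic.
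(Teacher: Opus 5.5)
Your skeleton matches the paper's sketch: the secant line gives the Weil structure $\eta$, Orlov's equivalence $\Phi$ produces $\mathcal{E}$, $\kappa(\mathcal{E})$ stays of Hodge type, Buchweitz--Flenner semiregularity gives local deformations, and connectedness globalizes. But Step one has a genuine gap: you never produce the sheaf the argument needs.

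\textbf{The pure spinors cannot be Chern characters.} The two points where a $K$-secant line $P$ meets the spinor variety are Galois-conjugate pure spinors $\ell,\overline{\ell}\in S^+_K$, and they are not defined over $\Q$. So they cannot be Chern characters of objects of $D^b(X)$. Chern characters of line bundles are \emph{rational} pure spinors $\exp(c_1)$, and a line through two rational pure spinors gives a split structure, not an action of $K$.

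\textbf{The correct input.} In the paper's account of Markman, the construction uses a pair of $K$-secant sheaves $F_1,F_2$ on $X$ whose rational Chern characters lie in the plane $P$, and sets $\mathcal{E}:=\Phi(F_1^\vee\boxtimes F_2)$. Your ``object built from the diagonal'' does not see $P$ at all. Nothing then makes $\kappa$ of its image invariant under the stabilizer $\mathrm{Spin}(V)_P$.

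\textbf{Why this invariance matters.} The $\mathrm{Spin}(V)_P$-invariance (Markman's Cor.~1.3.2) holds precisely because $\mathrm{ch}(F_1^\vee\boxtimes F_2)$ is built out of $P$. It is the mechanism behind the Hodge-persistence of $\kappa(\mathcal{E})$ over the whole $9$-dimensional Weil moduli, which you only assert in Step two. It is also what makes a nonzero Weil component of $\kappa_3(\mathcal{E})$ available for your final step. For the same reason, the semiregularity computation must be done for $F_1^\vee\boxtimes F_2$, transported by $\Phi$, where the relevant $\mathrm{Ext}$ groups come from those of $F_1$ and $F_2$ by K\"unneth. It cannot be done for a diagonal-type object.

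\textbf{Step three.} The closedness argument you add does not work as stated. Properness of moduli of $h$-semistable sheaves yields a limit only if $\mathcal{E}$ is $h$-semistable, which you have not shown. A limit sheaf need not be semiregular, so you cannot restart the deformation from it. Moreover, it is false that every point of the moduli lies on a secant triple. A very general abelian sixfold of Weil type is simple with $\End^0=K$, so it is not of the form $X\times\widehat X$.

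The paper only invokes connectedness of the moduli space and Markman's gluing argument. If you want a self-contained globalization, you do not need to deform the sheaf everywhere. Once the Weil classes are algebraic on a nonempty open subset of the irreducible moduli, use that the algebraicity locus of a flat family of Hodge classes is a countable union of closed algebraic subvarieties. It therefore must be the whole space.
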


\noindent
Markman's argument proceeds as follows.
Let $X$ be a polarized abelian threefold and $\widehat X$ its dual;
set $V := H^1(X,\Z)\oplus H^1(\widehat X,\Z)$
(where $S^+ := H^{\mathrm{ev}}(X,\Z)$ is the even half-spin representation
of $\mathrm{Spin}(V)$ \cite[\S1.2]{Markman25}).
A rational $K$-secant line $P$ to the spinor variety in
$\mathbf{P}(S^+_K)$ determines an embedding
$\eta\colon K\to\End_\Q(X\times\widehat X)$ making
$(X\times\widehat X,\eta,h)$ a polarized abelian sixfold of Weil type.
Using Orlov's derived equivalence
$\Phi\colon D^b(X\times X)\to D^b(X\times\widehat X)$
\cite[\S1.3]{Markman25}
and a pair of $K$-secant coherent sheaves $F_1,F_2$ on $X$
(whose Chern characters lie in the secant plane $P$),
Markman sets $\mathcal{E} := \Phi(F_1^\vee\boxtimes F_2)$
\cite[\S1.3--\S1.5]{Markman25}.
The characteristic class
$\kappa(\mathcal{E}) := \exp(-c_1(\mathcal{E})/\mathrm{rk}(\mathcal{E}))\,
\mathrm{ch}(\mathcal{E})$
is $\mathrm{Spin}(V)_P$-invariant \cite[Cor.~1.3.2]{Markman25}
and hence remains of Hodge type under all deformations of
$(X\times\widehat X,\eta,h)$ within the Weil moduli space.
The semiregularity theorem of Buchweitz--Flenner
\cite[\S9.3]{Markman25} shows that the reflexive sheaf $\mathcal{E}$
deforms with $(X\times\widehat X,\eta,h)$ locally over the $9$-dimensional
moduli space of abelian sixfolds of Weil type with discriminant~$-1$.
Since $\kappa(\mathcal{E})$ remains of Hodge type under all deformations, and $\mathcal{E}$ deforms to
every point of this moduli space, the Hodge--Weil classes are algebraic
\cite[\S8.4, \S9.3]{Markman25}.

\begin{observation}\label{obs:markman-fails}
Markman's theorem does not apply to fibers $A_{v_0}$ over
$V\cap\W_K$, for three distinct and compounding reasons.

\begin{enumerate}[label=(\arabic*)]
  \item \textbf{CM isolation.}
    $A_{v_0}$ is CM (Proposition~\ref{prop:cm}), so its
    Mumford--Tate group is a torus.
    CM points are isolated in every Shimura variety:
    $A_{v_0}$ admits no positive-dimensional deformation as a
    polarized abelian variety of Weil type.
    The semiregularity argument requires deforming the sheaf
    $\mathcal{E}$ over a family of positive dimension;
    this family collapses to a point at $A_{v_0}$.

  \item \textbf{Origin of the Hodge--Weil classes.}
    The Hodge--Weil classes $\widehat{HW}(A_{v_0})\subset H^{6}(A_{v_0},\Q)$
    arise from the $\mathrm{CM}$ type $\Phi$ of the degree-$12$ field $M$
    acting on $H^1(A_{v_0},\Q)$, not from a $K3$ or hyperk\"ahler
    geometric structure.
    Markman's $\widehat{HW}_P\subset H^{6}(X\times\widehat X,\Q)$
    is also $2$-dimensional, but is constructed via the spinor
    embedding and derived-category methods intrinsic to the
    $X\times\widehat X$ geometry.
    The two are structurally distinct objects requiring distinct
    algebraicity criteria; CM theory controls the structure of the
    Mumford--Tate group but does not by itself yield algebraicity
    (see \S\ref{ss:hc-avzero}).

  \item \textbf{Discriminant.}
    Markman's theorem requires discriminant~$-1$.
    The discriminant of the Weil structure on $A_{v_0}$---the
    determinant of the $K$-Hermitian form on $H^1(A_{v_0},\Q)$
    modulo norms---is not controlled by the construction of
    $V\cap\W_K$ and is not generically~$-1$.
    For $K=\Q(\sqrt{-3})$ and trivial discriminant, Schoen's theorem
    \cite{SchoenS2} covers sixfolds, but not the present situation where
    the discriminant is uncontrolled.
\end{enumerate}

\noindent
The same obstructions apply to all prior results in the literature:
Schoen \cite{SchoenS1} ($K=\Q(\sqrt{-3})$, specific polarization class, fourfolds),
Schoen \cite{SchoenS2} ($K=\Q(\sqrt{-3})$, arbitrary discriminant for
fourfolds; trivial discriminant for sixfolds),
and Markman \cite{M2} (arbitrary $K$, discriminant~$1$, fourfolds).
Any algebraic Hodge--Weil classes on fibers over $V\cap\W_K$
would constitute genuinely new instances of the Hodge conjecture,
inaccessible by current methods.
\end{observation}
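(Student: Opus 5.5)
The statement to justify is Observation~\ref{obs:markman-fails}: Markman's theorem (Theorem~\ref{thm:markman}) cannot be applied to the fibres $A_{v_0}$ for $v_0\in V\cap\W_K$. The plan is to check each hypothesis of Theorem~\ref{thm:markman} in turn against what Proposition~\ref{prop:cm} and the construction of $V\cap\W_K$ give. I would make the argument honest by stating precisely what ``does not apply'' means: the hypotheses of Markman's theorem are not \emph{known} to be verified at $A_{v_0}$. I would not claim a proof that algebraicity fails, nor that every hypothesis fails for every $v_0$.

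\emph{Step 1 (CM isolation).} I would first invoke Proposition~\ref{prop:cm}: $\End^0(A_{v_0})=M$ and $\MT(A_{v_0})\subset\Res_{M/\Q}\mathbb{G}_m$ is a torus. Hence $[A_{v_0}]$ is a special point and forms a zero-dimensional Shimura subvariety. I would then stress that the relevant deformation argument is Markman's. It deforms $\mathcal{E}$ along the positive-dimensional moduli of sixfolds of Weil type of fixed discriminant, and concludes algebraicity only at points \emph{reached} by that connected family. So the argument can reach $A_{v_0}$ only if $A_{v_0}$ itself lies in the discriminant~$-1$ Weil moduli. This reduces item (1) to item (3). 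Taken alone, isolation says only that no family \emph{internal to the CM structure} is available. I would phrase (1) carefully in this way, since a CM point can still lie on a positive-dimensional Weil family once the $L$-action is forgotten.

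\emph{Step 2 (origin of classes).} Here I would compare the two spaces. $W_K(A_{v_0})=\bigwedge^6_K H^1$ is defined from $\eta|_K$ alone. Markman's $\widehat{HW}_P$ is $\bigwedge^6_K$ for the $K$-structure induced by a secant line on $X\times\widehat X$. So the spaces agree \emph{whenever} $A_{v_0}\cong X\times\widehat X$ compatibly with $\eta$. The obstruction to record is therefore that no such splitting, and no $K$-secant datum, is furnished by the construction of $V\cap\W_K$. I would check that $A_{v_0}$ is simple whenever the CM type $\Phi_{I^+,I^-}$ is primitive, by computing its stabiliser in $\Gal(M/\Q)$. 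A simple $A_{v_0}$ is not isomorphic to a product, though it may still be isogenous to a member of the discriminant~$-1$ family.

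\emph{Step 3 (discriminant).} This is the main obstacle. The discriminant $\disc(H,\varphi_K)\in\Q^\times/N_{K/\Q}(K^\times)$ is given by the $K$-Hermitian form $\varphi_K$ determined by $\varphi$ and $\eta$. For a CM abelian variety with $H\cong M$ and $\varphi(x,y)=\tr_{M/\Q}(\xi x\bar y)$, one gets $\varphi_K(x,y)=\tr_{M/K}(\xi' x\bar y)$ for a suitable totally imaginary $\xi'$. The discriminant of this $6$-dimensional $K$-Hermitian space is then computable as $(-1)^3N_{L/\Q}(\xi'\sqrt{-d})$ modulo norms from $K$. My first attempt would be to express this in terms of $\xi$, which is in turn determined by the unknown point $z_0$. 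The conclusion is that it is not fixed \emph{a priori}, which is the claim. I would not attempt to show it is $\neq-1$, since that would require the explicit points of (O1).
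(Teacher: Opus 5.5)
Your Step~1 is correct, and it corrects the paper. A CM point is a special point of the $9$-dimensional Weil-type moduli space of its discriminant; it is not isolated in it. By the paper's own Theorem~\ref{thm:markman}, Markman's conclusion holds at \emph{every} point of the connected discriminant-$(-1)$ moduli space. The same reasoning makes Step~2 moot: once $A_{v_0}$ lies in that space, it does not matter whether it is simple or carries a secant datum.

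So everything rests on Step~3, and that is where the proposal has a genuine gap. You treat the discriminant as an invariant of $A_{v_0}$, fixed by the principal polarization coming from $X_L$ (``$\xi$ is determined by $z_0$''). But Markman's hypothesis is the existence of \emph{some} $\eta$-compatible polarization of discriminant $-1$, and $W_K(A_{v_0})=\bigwedge^6_K H^1(A_{v_0},\Q)$ depends only on $\eta$.

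Since $H^1(A_{v_0},\Q)\cong M$ and $\End^0(A_{v_0})=M$, the $\eta$-compatible Riemann forms are, up to $\Q^\times$, the forms $\tr_{M/\Q}(\sqrt{-d}\,\mu\,x\bar y)$. Here $\mu$ ranges over \emph{all} elements of $L^\times$ positive on $I^+$ and negative on $I^-$ (or the reverse, depending on conventions). The attached $K$-Hermitian form is a rational multiple of $\tr_{M/K}(\mu\,x\bar y)$. Computing in a $\Q$-basis of $L$ shows its determinant is $N_{L/\Q}(\mu)\disc(L/\Q)\equiv 21\,N_{L/\Q}(\mu)$ modulo $\Q^{\times 2}$. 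Your formula omits the factor $\disc(L/\Q)=3^3\cdot 7^5$, which matters for $d=7$. So the discriminant is not a single unknown value; it ranges over a computable coset.

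For the cases $d\in\{3,7\}$ that the paper emphasizes, this coset contains $-1$. The argument runs as follows.
\begin{enumerate}
\item $M=\Q(\zeta_{21})$ is unramified over $L$ at every finite prime and has class number $1$. Hence $M$ is the narrow Hilbert class field of $L$, and $L$ has narrow class number $2$. The sign vectors of units are therefore exactly those with an even number of minus signs.
\item The prime above $3$ is generated by $-t^3-2t^2-t-1$, of norm $-27$. This is stated in the paper's footnote, and a direct numerical check confirms it. So its sign vector is odd.
\item Multiplying this generator by a suitable unit gives $\mu$ with exactly the signs required by $(I^+,I^-)$ and $N_{L/\Q}(\mu)=-27$.
\item The determinant is then $\equiv -3^6\cdot 7^5\equiv -7\equiv -1$ modulo $N_{K/\Q}(K^\times)$, because $7=N(2+\sqrt{-3})=N(\sqrt{-7})$.
\item By Landherr's theorem the Hermitian space is split. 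So $A_{v_0}$ with this polarization lies in the discriminant-$(-1)$ moduli space, and Markman's theorem gives algebraicity of $W_K(A_{v_0})$.
\end{enumerate}

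The one obstruction you retained therefore disappears, and the observation is false as stated for $d\in\{3,7\}$. For general $d$, the honest formulation is this: Markman's theorem applies as soon as $-\disc(L/\Q)\in N_{L/\Q}(S_I)\cdot N_{K/\Q}(K^\times)$, where $S_I\subset L^\times$ is the set of elements positive on $I^+$ and negative on $I^-$. That is a local--global condition to be checked, not an uncontrolled invariant.
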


\subsection{The Hodge conjecture for $A_{v_0}$}
\label{ss:hc-avzero}

Fix $v_0\in V\cap\W_K$ and write $A = A_{v_0}$.
We recall what is known and what is open about the algebraicity
of the Hodge--Weil classes $\widehat{HW}(A)\subset H^6(A,\Q)$.

\subsubsection*{Weil classes lie in $H^{3,3}$}

By \cite[(1.9)]{MZ99} the Weil-type classes for $K$ on a $2n$-dimensional
abelian variety $A$ of Weil type form the space
\[
  W_K \;=\; \bigl({\textstyle\bigwedge}^{2n}_K H^1(A,\Q)\bigr)
  \;\subset\; H^{2n}(A,\Q),
\]
which consists of Hodge classes---and hence lies in $H^{n,n}$---if
and only if the eigenvalue multiplicities of $K$ in $H^{1,0}(A)$
satisfy $n_\sigma = n_{\overline\sigma}$ for every $\sigma\in\Sigma_K$.
For our sixfold, $2n = 6$ and $n = 3$,
so $W_K\subset H^6(A,\Q)$ and the Weil condition
(Definition~\ref{def:weil}) is precisely $n_\sigma = n_{\overline\sigma} = 3$,
giving $W_K\subset H^{6}(A,\Q)$.

Thus $\widehat{HW}(A) = W_K$ is a $2$-dimensional $\Q$-subspace
of $H^{6}(A,\Q)$. 
(It does not land in $H^{6,6}(A,\Q)$, which has $\Q$-dimension $1$
and is spanned by the fundamental class $[A]$; the codimension-$3$
condition $W_K\subset H^{3,3}$ is what makes these classes
non-trivial from the Hodge-conjecture perspective.)

\subsubsection*{Absolute Hodge but not known algebraic}

By Deligne~\cite[Thm.~2.11]{Deligne82}, every Hodge class on every
abelian variety is an \emph{absolute Hodge class}: it remains of
type $(p,p)$ under every automorphism of $\C$.
In particular, the classes in $\widehat{HW}(A)$ are absolute Hodge.
However, absolute Hodge does not imply algebraic in general;
proving algebraicity is the content of the Hodge conjecture. The relation between absolute Hodge classes and algebraicity is surveyed in ~\cite{CharlesSchnell}.

\subsubsection*{The Mumford--Tate group obstruction}

The main structural result available for Hodge classes is the
Hazama--Murty theorem \cite{Hazama83,Hazama84,Murty84}
(see also \cite[(1.8)]{MZ99} for the $\mathrm{Sp}_D$ formulation):
\emph{if $\Hg(A) = \Sp_D(H^1(A,\Q),\varphi)$, where $D = \End^0(A)$
and $\Sp_D$ denotes the centraliser of $D$ in $\Sp(H^1(A,\Q),\varphi)$
(the maximal possible Hodge group given $D$), then
all Hodge classes on all powers of $A$ are algebraic.}
In particular, when the hypothesis holds, the Hodge conjecture
is confirmed for~$A$ without any condition on the degree.
This theorem does not apply here.
Since $A$ is CM (Proposition~\ref{prop:cm}), its Hodge group is a
torus \cite[(1.2)]{MZ99}---strictly smaller than
$\Sp_D(H^1(A,\Q),\varphi)$ for any $D$ containing $M$.
The Hazama--Murty hypothesis fails maximally.

\subsubsection*{Two obstructions to existing methods}

All known proofs of algebraicity of Weil-type classes in degree
$H^{3,3}$ require control of the discriminant of the $K$-Hermitian
form: Schoen \cite{SchoenS2} ($K=\Q(\sqrt{-3})$, arbitrary discriminant for
fourfolds; trivial discriminant for sixfolds),
Schoen \cite{SchoenS1} ($K=\Q(\sqrt{-3})$, specific polarization class, fourfolds),
Markman \cite{M2} (arbitrary $K$, discriminant~$1$, fourfolds),
and Markman \cite{Markman25} (discriminant~$-1$, sixfolds, all $K$).
For $A_{v_0}$ two distinct obstructions remain:

\begin{enumerate}[label=(\arabic*)]
  \item \textbf{Uncontrolled discriminant.}
    The discriminant of the $K$-Hermitian form on $H^1(A_{v_0},\Q)$,
    defined as $\det H \bmod N_{K/\Q}(K^\times)\in\Q^\times/N_{K/\Q}(K^\times)$,
    is not prescribed by the conditions $v_0\in V\cap\W_K$.
    In particular, for general $K$ the discriminant is not generically $\pm 1$,
    so no existing theorem applies directly for arbitrary $K$ and uncontrolled
    discriminant, even if the CM type were known.

  \item \textbf{Unclassified CM type.}
    By Proposition~\ref{prop:cmtypes}, at most $20$ of the $64$
    CM types of $M$ (forming $10$ conjugate pairs out of $32$)
    are compatible with the Weil condition for $K$.
    Which type is realised at $A_{v_0}$ is not determined by the
    present construction (Remark~\ref{rem:weil-open}).
    Even with the CM type in hand, the reflex field $M^*\subset M$
    and the associated abelian variety of type $(M^*,\Phi^\vee)$
    do not by themselves yield algebraicity of $\widehat{HW}(A_{v_0})$.
\end{enumerate}

%==================================================================
\section{The Hecke program}\label{sec:program}
%==================================================================

Theorem~\ref{thm:finite} establishes that $V\cap\mathcal{W}_K$ is
finite; it leaves open whether this intersection is non-empty.
The present section formulates a concrete strategy for deciding
non-emptiness, reduces it to a finite computation, and identifies
the one step that remains genuinely open.
The strategy has two stages, corresponding to the two conditions
that any $v_0\in V\cap\mathcal{W}_K$ must satisfy: first, that
$A_{v_0}$ is CM (Proposition~\ref{prop:cm} already guarantees this,
but the Hecke approach gives an independent route that is
computationally accessible); and second, that the CM field contains
$K=\Q(\sqrt{-d})$ with $K$-action of Weil signature $(3,3)$ on
$H^{1,0}(A_{v_0})$.
The first stage is a finite explicit computation, carried out in
\S\ref{sub:computation} via the $\ell=43$ Hecke correspondence;
the second stage --- verifying the Weil signature --- is the
genuinely open step isolated in Remark~\ref{rem:q3-gap}.
The hypergeometric structure of the modular embedding
(Section~\ref{ss:hypergeometric}) explains why $\ell=43$ is the
natural first candidate and provides the functional equations that
each component map $f_i$ must satisfy at any CM fixed point. 

\subsection{Hecke correspondences on $X_L$}\label{ss:hecke}

Let $\mathfrak{l}\subset\OO_L$ be a prime ideal of norm
$N(\mathfrak{l})=\ell$ (a rational prime). The
$\mathfrak{l}$-Hecke correspondence on $X_L$ is
\[
  T_\mathfrak{l}\colon [A]\;\longmapsto\;
  \bigl\{[A/C] : C\subset A[\mathfrak{l}],\;
  C\cong\OO_L/\mathfrak{l}\text{ as }\OO_L\text{-module}\bigr\}.
\]
On $(\pm\HH)^6$ this lifts to the multi-valued map
% S5-C2
\begin{equation}\label{eq:hecke-lift}
  (z_1,\ldots,z_6) \;\longmapsto\;
  \bigl\{\,(\sigma_1(\gamma)\cdot z_1,\,\ldots,\,\sigma_6(\gamma)\cdot z_6)
  : \gamma\in M_2^+(\OO_L),\;\det(\gamma)\OO_L=\mathfrak{l}\,\bigr\},
\end{equation}
where $\sigma_i(\gamma)\cdot z_i$ denotes the M\"obius action of
$\sigma_i(\gamma)\in\SL_2(\R)$ on $z_i\in\HH$, and
$\det(\gamma)\OO_L=\mathfrak{l}$ denotes equality of ideals in $\OO_L$.

\begin{remark}\label{rem:hecke-ideal}
Hecke operators on a Hilbert modular variety $X_L$ are indexed by
\emph{ideals} $\mathfrak{l}\subset\OO_L$, not by rational primes.
For a rational prime $\ell$ splitting completely in $L$ as
$\ell\OO_L = \mathfrak{l}_1\cdots\mathfrak{l}_6$ (each
$N(\mathfrak{l}_i)=\ell$), the six operators $T_{\mathfrak{l}_i}$
are pairwise distinct, and a point of $X_L$ may be fixed by
some but not all of them.
\end{remark}

\subsection{Fixed points are CM points}\label{ss:hecke-cm}

\begin{proposition}\label{prop:hecke-cm}
If $[A] \in X_L$ is a fixed point of $T_{\mathfrak{l}}$ for some prime
ideal $\mathfrak{l} \subset \mathcal{O}_L$ with $N(\mathfrak{l}) = \ell$,
then $A$ is a CM abelian sixfold with $\End^0(A) \supsetneq L$.
\end{proposition}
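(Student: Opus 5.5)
The plan is to turn the fixed-point condition into an explicit extra endomorphism of $A$, commuting with $\OO_L$, and then use the rank-two structure of $H^1(A,\Q)$ over $L$ to show that this endomorphism generates a CM field of degree $12$.

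\emph{Step 1: produce the endomorphism.} Suppose $[A]\in T_{\mathfrak l}([A])$. Then there is a subgroup $C\subset A[\mathfrak l]$ with $C\cong\OO_L/\mathfrak l$ as $\OO_L$-module, and an $\OO_L$-linear isomorphism $\psi\colon A/C\xrightarrow{\sim}A$ of abelian varieties with real multiplication. Since $X_L$ is only a coarse space, $\psi$ respects the polarizations only up to a totally positive scalar in $L$, which is harmless here. Let $\phi\colon A\to A/C$ be the quotient isogeny, of degree $|C|=\ell$, and set $\alpha:=\psi\circ\phi\in\End(A)$. Then $\alpha$ is $\OO_L$-linear and $\deg\alpha=\ell$. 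In terms of the lift \eqref{eq:hecke-lift}, $\alpha$ corresponds to a matrix $\gamma\in M_2^+(\OO_L)$ with $\det(\gamma)\OO_L=\mathfrak l$ and $\widetilde\gamma\cdot z=z$ for a lift $z\in\HH^6$.

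\emph{Step 2: $\alpha\notin L$.} The module $H^1(A,\Q)$ is free of rank $2$ over $L$, so an element $a\in\OO_L\setminus\{0\}$ acts with $\deg(a)=N_{L/\Q}(a)^2$, which is a perfect square. Since $\deg\alpha=\ell$ is not a square, $\alpha\notin L$. Hence $\End^0(A)\supsetneq L$, which is the second assertion of Proposition~\ref{prop:hecke-cm}.

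\emph{Step 3: $L[\alpha]$ is a CM field.} Because $\alpha$ is $L$-linear, it lies in $\End_L(H^1(A,\Q))\cong M_2(L)$. It therefore satisfies its characteristic polynomial $x^2-cx+\det\gamma$ over $L$, and $E:=L[\alpha]$ is a commutative quadratic $L$-algebra. I would rule out the two degenerate cases as follows.
\begin{itemize}
\item $E\cong L[\varepsilon]$ with $\varepsilon^2=0$ is impossible, since $\End^0(A)$ is semisimple and has no nonzero nilpotent central elements of a commutative subalgebra of this type.
\item $E\cong L\times L$ would yield an idempotent, hence an isogeny splitting $A\sim A_1\times A_2$ with $L$ acting on each abelian threefold $A_i$. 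A totally real field of degree $6$ cannot act on a threefold: $H^1(A_i,\Q)$ would be a $1$-dimensional $L$-space, so the $\sigma_i$-eigenlines would be real, and they could not be compatible with a Hodge decomposition of type $(1,0)+(0,1)$.
\end{itemize}
Thus $E$ is a quadratic field extension of $L$. To see it is CM, I use compatibility with the polarization: $\alpha^\dagger\alpha=\lambda\in L$ is totally positive, with $N(\lambda)$ a power of $\ell$. The Rosati involution is positive and restricts to a positive involution of the field $E$. It is nontrivial on $E$: if it were trivial, then $\alpha^2=\lambda$, and the eigenvalues $\pm\sqrt{\sigma_i(\lambda)}$ would be real, which contradicts $\widetilde\gamma$ fixing a point of $\HH^6$, since that forces $\sigma_i(\gamma)$ to be elliptic in every coordinate. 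Hence $E$ is a totally imaginary quadratic extension of $L$, i.e.\ a CM field. Its degree is $[E:\Q]=12=2\dim A$, so, exactly as in the proof of Proposition~\ref{prop:cm}, $\MT(A)\subset\Res_{E/\Q}\mathbb G_m$ is a torus, $A$ is of CM type, and $\End^0(A)=E\supsetneq L$.

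\emph{Main obstacle.} I expect the delicate point to be Step~3, specifically ruling out the real and split cases rigorously. The cleanest route is probably the one sketched there: use that a fixed point of $z\mapsto\sigma_i(\gamma)z$ in each factor $\HH$ forces $\sigma_i(\gamma)$ to be elliptic, i.e.\ $\sigma_i(c)^2<4\sigma_i(\det\gamma)$ for every $i$. This simultaneously gives that the discriminant $c^2-4\det\gamma$ is totally negative, so $E=L(\sqrt{c^2-4\det\gamma})$ is CM. It also excludes $\gamma$ being scalar: a scalar $\gamma$ would have $\det\gamma$ a square times a unit, contradicting $\det(\gamma)\OO_L=\mathfrak l$ prime. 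If this elliptic-element argument goes through, it replaces the Rosati discussion entirely.
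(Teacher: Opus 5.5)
Your proof is correct, but both key steps take a different route from the paper's.

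\emph{Showing $\alpha\notin L$.} The paper uses the Rosati involution. It asserts $\alpha^\dagger\alpha=\ell$ via $\widehat\phi\circ\phi=[\ell]$, and concludes that $\alpha\in L$ would force $\sqrt{\ell}\in L$, which it excludes for $\ell\notin\{2,3,7\}$. Your degree count ($\deg a=N_{L/\Q}(a)^2$ for $a\in L$, versus $\deg\alpha=\ell$) is shorter and works for every prime $\ell$. It also avoids that identity, which is wrong in dimension $6$: $\widehat\phi\circ\phi=[\deg\phi]$ is an elliptic-curve fact, and here $\deg(\alpha^\dagger\alpha)=\ell^2$ whereas $\deg[\ell]=\ell^{12}$. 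The correct relation is the one you state, $\alpha^\dagger\alpha=\lambda$ with $\lambda$ a totally positive generator of $\mathfrak l$. With it the paper's step can be repaired, since $\alpha\in L$ would give $(\alpha)^2=\mathfrak l$.

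\emph{The CM property.} The paper only notes that $L(\alpha)$ is quadratic over $L$ with $\alpha^\dagger\neq\alpha$. This tacitly needs positivity of $\dagger$, and it never rules out $L[\alpha]\cong L\times L$. Your ellipticity argument handles this directly: $\gamma$ is non-scalar and each $\sigma_i(\gamma)$ fixes a point of $\HH$, so $\sigma_i(\tr\gamma)^2<4\sigma_i(\det\gamma)$ for all $i$. Hence $\tr(\gamma)^2-4\det\gamma$ is totally negative, which disposes of the split, nilpotent and non-CM-field cases at once. It should therefore be your main line rather than a fallback. It also ties directly to the fixed-point quadratics of the Hecke program. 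What the repaired Rosati route buys in exchange is independence from the uniformization: it only uses positivity of $\dagger$ on the $\dagger$-stable algebra $L[\alpha]$.

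One blemish: your reason in the nilpotent bullet is not valid as written, since semisimple algebras do contain nilpotents. The case is already excluded by ellipticity, or by your own degree count: $\alpha=r+\varepsilon$ with $r\in L$ has $\deg\alpha=N_{L/\Q}(r)^2$.
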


\begin{proof}
Since $[A]$ is fixed by $T_{\mathfrak{l}}$, there exists an
$\mathcal{O}_L$-linear isogeny
\[
  \phi \colon A \longrightarrow A/C
\]
of degree $\ell = N(\mathfrak{l})$, where $C \subset A[\mathfrak{l}]$
is an $\mathcal{O}_L$-stable subgroup with
$C \cong \mathcal{O}_L/\mathfrak{l}$ as $\mathcal{O}_L$-modules,
together with a principal polarization-preserving isomorphism
\[
  \psi \colon A/C \xrightarrow{\;\sim\;} A.
\]
Set $\alpha := \psi \circ \phi \in \End(A)$.

Let $(\,\cdot\,)^\dagger$ denote the Rosati involution on $\End^0(A)$
associated to the principal polarization.  Since $\psi$ is an
isomorphism of principally polarized abelian varieties, it satisfies
$\widehat\psi = \psi^{-1}$ (where $\widehat\psi$ is the dual isogeny), so
\[
  \alpha^\dagger
  = (\psi \circ \phi)^\dagger
  = \widehat\phi \circ \widehat\psi
  = \widehat\phi \circ \psi^{-1}.
\]
The composition $\widehat\phi \circ \phi$ equals multiplication by
$\deg\phi = \ell$ on $A$, hence
\[
  \alpha^\dagger \circ \alpha
  = \widehat\phi \circ \psi^{-1} \circ \psi \circ \phi
  = \widehat\phi \circ \phi
  = [\ell].
\]
In $\End^0(A)$ this reads $\alpha^\dagger \alpha = \ell$.

We show that $\alpha \notin L$. Suppose for contradiction that $\alpha \in L \subset \End^0(A)$.
Since $L$ is totally real, the Rosati involution restricts to the
identity on $L$ (the Rosati involution acts as complex conjugation on
any CM subalgebra, and is the identity on the totally real subfield).
Hence $\alpha^\dagger = \alpha$, and substituting into
$\alpha^\dagger\alpha = \ell$ (established above) gives 
\[
  \alpha^2 = \ell \in \mathbb{Z},
\]
so $\alpha = \sqrt{\ell} \in L$.  We show this is impossible for
any rational prime $\ell\notin\{2,3,7\}$.% S5-C4

Since $L=\mathbb{Q}(\cos\pi/21)\subset\mathbb{Q}(\zeta_{42})$, the
discriminant of $L/\mathbb{Q}$ is divisible only by $2,3,7$.
Every quadratic subfield of $L$ must have discriminant dividing
$\disc(L/\mathbb{Q})$, so can ramify only at primes in $\{2,3,7\}$.
For any prime $\ell\notin\{2,3,7\}$, the field
$\mathbb{Q}(\sqrt{\ell})$ has discriminant divisible by $\ell$,
so $\mathbb{Q}(\sqrt{\ell})\not\subset L$ and $\sqrt{\ell}\notin L$,
contradicting $\alpha=\sqrt{\ell}\in L$.
Every prime $\ell\equiv 1\pmod{42}$ satisfies $\ell\notin\{2,3,7\}$,
so the argument applies to all primes used in the Hecke search.

Hence $\alpha \notin L$, and $\End^0(A) \supsetneq L$.

Since $\alpha^\dagger\alpha = \ell$ and $\alpha \notin L$, the element
$\alpha$ is not fixed by the Rosati involution.  Setting
\[
  c := \alpha + \alpha^\dagger \in \End^0(A),
\]
the fact that $(\,\cdot\,)^\dagger$ restricts to the identity on $L$
and that $\alpha$ commutes with the $\mathcal{O}_L$-action (since
$\phi$ and $\psi$ are both $\mathcal{O}_L$-linear) implies $c \in L$.
Indeed, $c = \mathrm{tr}_{L(\alpha)/L}(\alpha)$.  The minimal
polynomial of $\alpha$ over $L$ is therefore
\[
  X^2 - cX + \ell = 0, \qquad c \in \mathcal{O}_L,
\]
so $[L(\alpha) : L] = 2$ and $[L(\alpha) : \mathbb{Q}] = 12$.

Since $L(\alpha) \subset \End^0(A)$ is a field extension of $L$ of
degree $2$, and $L$ is totally real while $\alpha^\dagger = c - \alpha
\neq \alpha$, the field $L(\alpha)$ is a totally imaginary quadratic
extension of the totally real field $L$: that is, $L(\alpha)$ is a CM
field of degree $[L(\alpha):\mathbb{Q}] = 12 = 2\dim A$.  Since
$\End^0(A)$ contains a CM field of degree $2\dim A$, the
Mumford--Tate group $\mathrm{MT}(A)$ is a torus, so $A$ is a CM
abelian variety.
\end{proof}

\begin{remark}\label{rem:q3-gap}
Proposition~\ref{prop:hecke-cm} gives $\End^0(A_{v_0})\supsetneq L$
but \emph{does not} establish $v_0\in\W_K$.
Membership in $\W_K$ requires two further conditions:
\begin{enumerate}[label=(\alph*)]
  \item the CM field contains $K = \Q(\sqrt{-d})$ specifically
    (not an arbitrary imaginary quadratic extension of $L$), and
  \item the $K$-action on $H^{1,0}(A_{v_0})$ has eigenvalue
    multiplicities $(3,3)$.
\end{enumerate}
Neither condition follows from the Hecke fixed-point construction
alone.
Condition~(a) fails if the endomorphism $\alpha = \psi\circ\phi$ 
generates a quadratic extension $L(\alpha)/L$ different from $KL/L$;
this can occur even for primes $\mathfrak{l}$ splitting completely
in $M$.
Condition~(b) is the Weil signature condition;
verifying it requires computing the Hodge decomposition on
$H^{1,0}(A_{v_0})$, which is not accessible from the isogeny alone.
Both conditions constitute the open part of the existence program.
\end{remark}

\open{\textbf{.} Determine whether there exists a prime $\mathfrak{l}$ splitting
completely in $M = L(\sqrt{-d})$ and a $T_\mathfrak{l}$-fixed
point $[A]\in V\cap X_L$ such that $\End^0(A)$ contains $K$
with $K$-action of signature $(3,3)$ on $H^{1,0}(A)$.
If such a point exists, then $V\cap\W_K\neq\emptyset$.}

%------------------------------------------------------------------
\subsection{An algorithm for $\ell = 43$}\label{sub:computation}
%------------------------------------------------------------------

To find a fixed point of the pullback correspondence
$\widetilde{f}_0^* T_\mathfrak{l}$, one proceeds as follows.

\begin{enumerate}[label=\textup{Step~\arabic*.}, leftmargin=*, itemsep=4pt]

\item \textbf{Choose a prime ideal.}
  Take $\ell$ a rational prime splitting completely in
  $M = L(\sqrt{-d})$ (density $1/12$ by Chebotarev, since $[M:\Q]=12$),
  so that
  \[
    \ell\OO_L = \mathfrak{l}_1\cdots\mathfrak{l}_6,\quad
    N(\mathfrak{l}_i) = \ell,
  \]
  and each $\mathfrak{l}_i$ splits further in $M$.
  The splitting condition $\ell\equiv 1\pmod{42}$ is necessary and
  sufficient for complete splitting in $M = L(\sqrt{-d})$ when
  $d\in\{3,7\}$ (so $M = \Q(\zeta_{42})$, conductor~$42$).
  The smallest such prime is $\ell = 43$.
  Work with a fixed ideal $\mathfrak{l} = \mathfrak{l}_1$, choose
  a generator $\pi\in\OO_L$ of $\mathfrak{l}$\footnote{The ideal $\mathfrak{l}_1$ is principal because $\mathcal{O}_L$ is
a principal ideal domain: the class number of $L=\Q(\cos\tfrac{\pi}{21})$ is
$h_L=1$. This follows from the Minkowski bound $M\approx 10.40$
(using $\disc(\mathcal{O}_L)=3^3\cdot 7^5$) and the observation that every
prime ideal of $\mathcal{O}_L$ with norm $\leq 10$ is principal, verified
directly: $p=2$ is inert; the ramified primes $p=3,7$ have principal prime
divisors with generators $-t^3-2t^2-t-1$ (norm $-27$) and $t^2-1$ (norm $7$)
respectively; and $p=5$ splits into principal primes of norm $5^3$
(generator $-t^3+t^2-2$). An explicit generator for
$\mathfrak{l}_1=(43,\,t-5)$ is $\pi_1 = t^4-t^2+t-3\in\mathcal{O}_L$,
with $N_{L/\Q}(\pi_1)=43$.} (so
  $(\pi) = \mathfrak{l}$ as an ideal in $\OO_L$), and set
  $\pi_i = \sigma_i(\pi)\in\R$.

\item \textbf{List coset representatives.}
  The right coset space
  $\SL_2(\OO_L) \backslash M_2^+(\OO_L)_\mathfrak{l}$
  (matrices with determinant ideal $\mathfrak{l}$)
  decomposes into $N(\mathfrak{l})+1 = \ell+1$ cosets.
  Canonical representatives are
  \[
    \gamma_j = \begin{pmatrix}\pi & j\\ 0 & 1\end{pmatrix},
    \quad j\in\OO_L/\mathfrak{l}\;\cong\;\F_\ell,
  \]
  and the coset at $\infty$ is represented by any element
  $\gamma_\infty\in M_2^+(\OO_L)$ with $\det(\gamma_\infty)=\pi$
  and upper-left entry a unit in $\OO_L$; a canonical choice is
  \[
    \gamma_\infty = \begin{pmatrix}0 & -1\\ \pi & 0\end{pmatrix},
  \]
  which has $\det = \pi$.

\item \textbf{Write the fixed-point equations.}
  A point $(w_1,\ldots,w_6)\in(\pm\HH)^6$ represents a fixed
  point of $T_\mathfrak{l}$ in $X_L = (\pm\HH)^6/\SL_2(\OO_L)$
  if there exist a coset representative $\gamma_j$ from Step~2
  and an element $\delta\in\SL_2(\OO_L)$ such that
  \[
    \sigma_i(\delta\gamma_j)\cdot w_i = w_i
    \quad\text{for all }i=1,\ldots,6.
  \]
  Writing $\alpha = \delta\gamma_j =
  \bigl(\begin{smallmatrix} a & b\\ c & d\end{smallmatrix}\bigr)
  \in \SL_2(\OO_L)\gamma_j$, so $ad-bc=\pi$, and
  the M\"obius fixed-point equation
  $\sigma_i(\alpha)\cdot w_i = w_i$ expands to
  \begin{equation}\label{eq:fp-quadratic}
    \sigma_i(c)\,w_i^2
    + \bigl(\sigma_i(d)-\sigma_i(a)\bigr)w_i
    - \sigma_i(b) \;=\; 0.
  \end{equation}
  For $c=0$ (upper-triangular case) this degenerates to a linear
  equation; for $c\neq 0$ it is quadratic.

\item \textbf{Match to the modular embedding.}
  Set $w_i = f_i(z_0)$ for a point $z_0\in\HH$,
  where $f_i\colon\HH\to\HH$ is the $i$-th component of the
  modular embedding $\widetilde{f}_0$.
  Recall (Observation~\ref{obs:ode-sec5}) that each $f_i$ is
  determined by the Schwarz triangle map for the triangle
  $\Delta_0$ embedded via $\sigma_i$:
  explicitly, the Schwarz ODE for each component has
  rational parameters
  \begin{equation}\label{eq:schwarz-params}
    a \;=\; \tfrac{19}{42}, \qquad
    b \;=\; \tfrac{3}{7}, \qquad
    c \;=\; \tfrac{13}{14},
  \end{equation}
  and all six functions $f_1,\ldots,f_6$ satisfy the same 
  hypergeometric ODE
  $E\!\left(\tfrac{19}{42},\tfrac{3}{7};\tfrac{13}{14}\right)$
  (Observation~\ref{obs:ode-sec5}), each being the Schwarz map for a
  distinct Galois-conjugate triangle $T_i$, distinguished by
  monodromy rather than by analytic continuation
  (see \cite[\S4]{McM23} and Figure~5 therein for the
  triangle-to-embedding assignment).

  By Cohen--Wolfart \cite{CW90}, the modular embedding
  $f$ maps CM points to CM points and maps non-CM points
  to transcendental points.  Hence the $f_i(z_0)$ are
  simultaneously algebraic if and only if $z_0$ is a CM point.

\item \textbf{Reduce to a finite system.}
  For $\ell = 43$, there are $\ell+1 = 44$ coset representatives.
  For each representative $\gamma_j$, the system
  \eqref{eq:fp-quadratic} across $i=1,\ldots,6$ gives six
  quadratic equations in $z_0$ (after substituting $w_i = f_i(z_0)$).
  A solution $z_0$ exists if and only if the six values% S5-C7
  $f_1(z_0),\ldots,f_6(z_0)$ simultaneously satisfy the
  fixed-point equations for some $\alpha\in\SL_2(\OO_L)\gamma_j$.
  For each of the $44$ cosets and each of the
  $2^6 = 64$ sign-choices for the two roots of each
  component quadratic, one obtains a single equation in $z_0$
  that can be checked for solutions in $\HH$.
  The total search space is $44\times 64 = 2816$ systems,
  each a single algebraic equation in $z_0$.

\end{enumerate}

\open{\textbf{.} Step~4 identifies the Schwarz maps $f_i$ with solutions of 
$E\!\left(\tfrac{19}{42},\tfrac{3}{7};\tfrac{13}{14}\right)$.
The explicit monodromy-to-embedding assignment for each $\sigma_i$
requires the matrix generators of $\Delta_0$ over $\OO_L$
from \cite[\S4]{McM23}.}

\open{\textbf{.} Whether the system of Step~5 has a solution for any prime
$\mathfrak{l}$ is the central open question of this paper.
The prime $\ell = 43$ is the natural first candidate:
it is the smallest rational prime splitting completely in
$M = L(\sqrt{-d})$ for $d\in\{3,7\}$ (i.e., $\ell\equiv 1\pmod{42}$),
and a solution would yield a CM point $v_0\in V$ with
$\End^0(A_{v_0})\supsetneq L$.
However, finding such a $v_0$ only establishes the first stage 
of the existence program (that $A_{v_0}$ is CM with
$\End^0(A_{v_0})\supsetneq L$);
whether $v_0\in\W_K$ additionally requires verifying
conditions (a) and (b) of Remark~\ref{rem:q3-gap}.}

%------------------------------------------------------------------
\subsection{The hypergeometric structure}\label{ss:hypergeometric}
%------------------------------------------------------------------

The fixed-point equations of \S\ref{sub:computation} involve the six
component functions $f_i$ of the modular embedding
$\widetilde{f}_0\colon\HH\to\HH^6$.
We record here the arithmetic structure of these functions, which
underlies both the choice $\ell=43$ in Step~1 and the
monodromy-to-embedding assignment needed in Step~4. 

\subsubsection*{Uniform hypergeometric ODE}

\begin{observation}\label{obs:ode-sec5}
Each $f_i\colon\HH\to\HH$ is the uniformizing Schwarz triangle map
for the hyperbolic triangle $T_i$ determined by the $i$-th Galois
conjugate of the $\Delta_0$-action.
Specifically, by the construction of \cite[\S6]{McM23}, applied to
the finite-index subgroup $\Gamma\subset\Delta_0\subset\SL_2(\OO_L)$
(where $\OO_L = \Z[t]$, $t=2\cos\pi/21$, by \cite[\S4]{McM23}),
each $f_i$ satisfies the equivariance
\begin{equation}\label{eq:equivariance}
  f_i\bigl(\sigma_i(g)\cdot z\bigr) \;=\; \sigma_i(g)\cdot f_i(z)
  \qquad\text{for all }g\in\Gamma,\;z\in\HH,
\end{equation}
where $\sigma_i\colon L\hookrightarrow\R$ is the $i$-th real embedding
applied to the matrix entries (which lie in $\OO_L$ by
Theorem~\ref{thm:mcm2}).

The defining ODE for each $f_i$ is the Schwarz ODE of the parent
triangle group $\Delta(14,21,42)$. \end{observation}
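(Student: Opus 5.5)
The plan is to prove the three assertions of Observation~\ref{obs:ode-sec5} in order: the identification of $f_i$ with the Schwarz map of $T_i$, the equivariance \eqref{eq:equivariance} exactly as written, and the fact that the governing ODE is that of the parent group $\Delta(14,21,42)$.

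\textbf{Step 1: $f_i$ is the Schwarz map of $T_i$.} By the construction recalled in \S\ref{subsec:embedding}, $f_i$ restricted to $T_1$ is the Riemann--Carath\'eodory map $T_1\to T_i$ that respects the labelling of vertices. It is then continued by Schwarz reflection. Since $\Gamma\subset\Delta_0\subset\SL_2(\OO_L)$ by Theorem~\ref{thm:mcm2}, the matrix entries lie in $\OO_L$, so $\sigma_i$ makes sense entrywise. Because $\sigma_i$ is a field embedding, it preserves the relations $a^{14}=b^{21}=c^{42}=abc=-I$. Hence $\sigma_i(a),\sigma_i(b),\sigma_i(c)$ are elliptic elements fixing the vertices of $T_i$, and $T_i$ is a (generally non-Fuchsian) triangle with angles $k\pi/14$, $k'\pi/21$, $k''\pi/42$, the Galois conjugates of the angles of $T_1$. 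This is exactly the statement that $f_i$ is the uniformizing Schwarz triangle map for $T_i$.

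\textbf{Step 2: the equivariance as stated.} I would first check \eqref{eq:equivariance} on the generators $A,B,C,D$ of $\Delta_0$ and then extend it multiplicatively; since $\Gamma\subset\Delta_0$, it then restricts to $\Gamma$. Each generator is a product of two reflections in sides of $T_1$ (respectively $Q(7,21,21,21)$). Under $\sigma_i$ these become the corresponding products of reflections in the sides of $T_i$, because the side-pairing data are defined over $\OO_L$ and $\sigma_i$ transports the reflection group of $T_1$ onto that of $T_i$. The form $f_i(\sigma_i(g)\cdot z)=\sigma_i(g)\cdot f_i(z)$, with $\sigma_i(g)$ on both sides, has to be verified in the normalization in which the source copy of $\HH$ for the $i$-th component carries the $\sigma_i(\Gamma)$-action. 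In this normalization the reflection principle says that $f_i$ intertwines reflection in a side with reflection in the corresponding side. Composing two such reflections yields \eqref{eq:equivariance} for each generator, and then for all $g\in\Gamma$ by the cocycle identity. For $i=1$ everything reduces to $f_1=\mathrm{id}$, which serves as a consistency check. I expect this step to be the main obstacle. The hard point is to set up the normalization of the source so that the twisted action on it is well defined and compatible with the orientation flip $f_i\mapsto -\overline{f_i}$ used when $T_i$ has the opposite orientation. I would handle this first on the $\sigma_i$-images of the triangle-group generators $a,b,c$, checking fixed points and rotation angles directly.

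\textbf{Step 3: the ODE.} Let $J\colon\HH\to\mathbb{P}^1$ be the Hauptmodul of $\Delta(14,21,42)$, normalized so that the three cone points map to $0,1,\infty$. By Step~2, each $f_i$, viewed as a multivalued function of $J$, has monodromy contained in $\sigma_i(\Delta)$, and it is a ratio $y_2/y_1$ of two solutions of a second-order Fuchsian equation on $\mathbb{P}^1$ whose only singularities are $0,1,\infty$. Any such rigid Fuchsian equation is hypergeometric; its exponent differences are read off from the angles of $T_i$, which Step~1 identifies as Galois conjugates of $(1/14,1/21,1/42)$. Pulling the equation back along $J$ therefore gives, for each $i$, an ODE in the uniformizing variable of the parent group $\Delta(14,21,42)$, which is the claimed statement. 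The six components are distinguished only by the monodromy representation $\sigma_i\circ\rho$. I would finish by matching the local exponents against the parameters \eqref{eq:schwarz-params} at the three singular points.
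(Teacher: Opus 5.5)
Your Step 1 is essentially fine. The one thing to add is that $a,c\notin\Delta_0$ have entries in $K_\Delta$, so $\sigma_i$ must first be extended to $K_\Delta$. This is harmless in $\mathrm{PSL}_2$, since $\Gal(K_\Delta/L)$ only changes the signs of $a$ and $c$. Steps 2 and 3, however, aim at assertions that are false for $i\neq 1$, so they cannot be completed as planned.

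\textbf{Step 2.} Your reflection argument proves $f_i(g\cdot z)=\sigma_i(g)\cdot f_i(z)$, not the displayed identity. Reflection in a side of $T_1$ is intertwined with reflection in the corresponding side of $T_i$, so composing two reflections gives $f_i\circ g=\sigma_i(g)\circ f_i$, with $g$ acting on the source through $\sigma_1=\mathrm{id}$. This is the equivariance stated in \S\ref{subsec:embedding}. It is also the form the paper itself uses for $B$: the left side there is $f_i(-1/(z+t))$, not $f_i(-1/(z+\sigma_i(t)))$.

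The ``normalization in which the source carries the $\sigma_i(\Gamma)$-action'' does not exist. $\widetilde{f}_0$ has a single source $\HH$, and $\Gamma$ acts on it through $\sigma_1$ because $f_1=\mathrm{id}$. The displayed \eqref{eq:equivariance} instead says that $f_i$ commutes with every element of $\sigma_i(\Gamma)$, and this is false for $i\neq1$:
\begin{enumerate}
\item $\sigma_i(\Gamma)$ is Zariski dense in $\SL_2(\R)$.
\item It is not discrete. If it were, $\sigma_i(\Delta_0)$ would be a cocompact Fuchsian group, and the abstract isomorphism $\sigma_i$ would be induced by a homeomorphism of $\HH$. Such a homeomorphism preserves rotation numbers of elliptic elements up to sign. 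But $\sigma_i(B)$ rotates by $\pm 2\pi k_i/21$ with $k_i\not\equiv\pm1\pmod{21}$, while $B$ rotates by $\pm2\pi/21$.
\item A Zariski dense, non-discrete subgroup is dense, so $f_i$ would commute with all of $\mathrm{PSL}_2(\R)$. It would then fix every point of $\HH$, i.e.\ $f_i=\mathrm{id}$, which is absurd.
\end{enumerate}
You should treat \eqref{eq:equivariance} as a misprint and prove the intertwining form for all $g\in\Delta$. That is also what Step 3 needs, since loops around the cone points of $J$ correspond to $a,b,c\in\Delta$.

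\textbf{Step 3.} You correctly read the exponent differences of the equation for $f_i$ (as a function of $J$) off the triangle $T_i$. That is exactly why your final matching step fails for every $i\neq1$. At the singular point coming from the $\pi/21$-vertex, the local projective monodromy of $f_i$ is $\sigma_i(B)$, so the exponent difference there is $\equiv\pm k_i/21\pmod 1$. For $E(19/42,3/7;13/14)$ it is $c-a-b=1/21$.

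No choice of solutions can rescue ``same ODE, different monodromy.'' Two ratios of solutions of one second-order equation differ by a M\"obius map $\mu$. So if $f_i$ satisfied the same equation as $f_1=\mathrm{id}$, then $f_i=\mu$ and $\sigma_i(g)=\mu g\mu^{-1}$ for all $g$. This contradicts $\tr^2\sigma_i(B)=4\cos^2(k_i\pi/21)\neq4\cos^2(\pi/21)=\tr^2 B$.

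The paper's justification, that the rational parameters are fixed by $\sigma_i$, is a non sequitur. $\sigma_i$ acts on matrix entries, and its effect on the equation goes through the local exponents ($a\mapsto\{ka\}$, etc., as in Cohen--Wolfart's description of the conjugate equations). What your Step 3 actually proves without its last sentence, and what is true, is that $f_i=s_i\circ J$. Here $s_i$ is a Schwarz map for a hypergeometric equation whose exponent differences are the Galois-conjugate angles of $T_i$. The six equations share the variable $J$ and the three singular points, but they are six different equations.
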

The uniformizing function of the hyperbolic triangle with angles
$(\pi/14,\pi/21,\pi/42)$ is a ratio of two independent solutions of
the hypergeometric equation $E(a,b;c)$ \cite{BH89,DM86} with
\begin{equation}\label{eq:schwarz-params2}
  a \;=\; \tfrac{19}{42},\qquad
  b \;=\; \tfrac{3}{7},\qquad
  c \;=\; \tfrac{13}{14},
\end{equation}
computed from the triangle angles by the standard Schwarz formula
($c=1-1/p$, $a-b=1/r$, $c-a-b=1/q$ with $(p,q,r)=(14,21,42)$).
All three parameters are rational; in particular
$\sigma_i(a)=a$, $\sigma_i(b)=b$, $\sigma_i(c)=c$ for every
embedding $\sigma_i$.
Thus \emph{all six functions $f_1,\ldots,f_6$ satisfy the same
hypergeometric equation} $E(19/42,\,3/7;\,13/14)$; they differ
only in their monodromy representation
(i.e., in which fundamental triangle $T_i$ they uniformize).

\subsubsection*{All six component maps $f_i$ land in $\HH$} 

For the group $\Delta_0 = \Delta_0(14,21,42)$, McMullen shows
\cite[\S6]{McM23} that all six Galois conjugate triangles
$T_1,\ldots,T_6$ have the \emph{same} orientation as $T_1$
(see Figure~5 of \cite{McM23}, which records
$\widetilde{f}_0\colon\HH\to\HH^6$).
Consequently, all six component maps satisfy
\[
  f_i\colon\HH\;\longrightarrow\;\HH,
  \qquad i=1,\ldots,6.
\]
This uniformity --- which contrasts with the general $\pm\HH$
ambiguity for arbitrary modular embeddings \cite[\S6]{McM23} --- holds 
because the unique index-two subgroup $\Delta_0\subset\Delta(14,21,42)$
is fixed by the Galois involution $G=\Gal(K/K_0)\cong\Z/2$
acting by $a\mapsto -a$, $b\mapsto b$, $c\mapsto -c$ on the
generators of $\Delta(14,21,42)$ \cite[\S4]{McM23}.
Thus, for $\Delta_0(14,21,42)$, all six component maps satisfy
$f_i:\HH\to\HH$; the map $\widetilde{f}_0$ lands in $\HH^6\subset(\pm\HH)^6$
\cite[Fig.~5]{McM23}.

\subsubsection*{The simplest equivariance: generator $B$}

The simplest generator of $\Delta_0$ is
\[
  B \;=\; \begin{pmatrix}0 & -1\\ 1 & t\end{pmatrix},
  \qquad t = 2\cos\tfrac{\pi}{21},
\]
with $\tr(B)=t$ and $B^{21}=-I$ \cite[\S4]{McM23}.
Under $\sigma_i$, the Galois conjugate is
\[
  B_i := \sigma_i(B) \;=\;
  \begin{pmatrix}0 & -1\\ 1 & \sigma_i(t)\end{pmatrix}
  \;=\;
  \begin{pmatrix}0 & -1\\ 1 & 2\cos\tfrac{k_i\pi}{21}\end{pmatrix},
\]
where $k_i\in\{1,5,11,13,17,19\}$, acting on $\HH$ by a
M\"obius transformation of trace $2\cos(k_i\pi/21)$.
The equivariance \eqref{eq:equivariance} for $g=B$ states
\[
  f_i\!\left(\frac{-1}{z + t}\right)
  \;=\;
  \frac{-1}{f_i(z) + \sigma_i(t)},
\]
a functional equation that $f_i$ satisfies identically in $z$.
This is the simplest non-trivial arithmetic constraint on each
component map $f_i$.% S5-C12 (line 2459)

\subsubsection*{Why $\ell=43$ is the natural first candidate}

Recall from Step~1 of \S\ref{sub:computation} that the natural
primes $\ell$ for the Hecke search are those splitting completely
in $M = L(\sqrt{-d})$, i.e., those satisfying $\ell\equiv 1\pmod{42}$
(for $d\in\{3,7\}$, where $M=\Q(\zeta_{42})$).
The hypergeometric structure explains why this condition is also
natural from the perspective of the component maps $f_i$. 

A prime $\ell\equiv 1\pmod{42}$ splits completely in $\Q(\zeta_{42})$,
so all $42$nd roots of unity reduce to elements of $\F_\ell$.
In particular, $2\cos(k_i\pi/21) \equiv \zeta_{42}^{k_i}+\zeta_{42}^{-k_i}
\pmod{\mathfrak{p}}$ for any prime $\mathfrak{p}\mid\ell$ in $M$,
and these reductions are distinct in $\F_\ell$.
The fixed-point quadratics \eqref{eq:fp-quadratic} from Step~3
thus have coefficients in $\Z$ (after reduction mod $\mathfrak{l}$)
with discriminants that factor completely over $\F_\ell$.
For $\ell=43$: all $42$nd roots of unity are in $\F_{43}^\times$
(which has order $42$), so the six Galois conjugates
$\sigma_i(t) = 2\cos(k_i\pi/21)$ are simultaneously available
as explicit elements of $\F_{43}$, allowing the fixed-point 
equations~\eqref{eq:fp-quadratic} to be pre-screened modulo $43$
before undertaking a full numerical computation in $\HH$.

\open{\textbf{.} The assignment of the six Galois conjugate component maps
$f_1,\ldots,f_6$ to specific embeddings $\sigma_1,\ldots,\sigma_6$
of $L$ requires the explicit matrix generators $A,B,C,D$ of
$\Delta_0$ over $\OO_L = \Z[t]$ given in \cite[\S4]{McM23}
and the full description of how these generators act on
the six triangles $T_i$.
McMullen's Figure~5 records the result, but extracting the
explicit monodromy-to-embedding assignment from the generators $A,B,C,D$
(whose entries are polynomials of degree $\le 5$ in
$t=2\cos\pi/21$) requires a numerical verification
that has not been carried out here.}

%------------------------------------------------------------------
\subsection{Why the search terminates: heights, bounds, and finiteness}\label{ss:finite-computation}
%------------------------------------------------------------------

We now explain why the search described in
\S\S\ref{ss:hecke}--\ref{ss:hypergeometric} terminates in finite
time regardless of its outcome.
The argument combines three ingredients: the CM type constraint of
Proposition~\ref{prop:cmtypes} (at most $20$ oriented Weil-compatible
CM types); the height bound of \cite[Prop.~7.14]{vKK23}, which
gives a uniform upper bound on the Faltings height of every
$A_{v_0}$ in terms of $\mathrm{rad}(\disc(M/\Q))$ alone; and
Faltings' theorem \cite[Satz~6]{Faltings83}, which converts a
height bound over a fixed number field into finiteness of isogeny
classes.
The upshot is that the set of isogeny classes of abelian sixfolds
that could arise as fibers over $V\cap\mathcal{W}_K$ is finite and
explicitly bounded --- even though the exact list has not yet been
determined.
Height bounds via the Colmez conjecture are in \cite{AGHMP}.

\subsubsection*{Field of definition and height bound}

Every $v_0\in V\cap\W_K$ gives rise to a CM abelian sixfold $A_{v_0}$
with CM field $M = L(\sqrt{-d})$ and CM type $\Phi$ belonging to the
set of (at most) $20$ Weil-compatible types of
Proposition~\ref{prop:cmtypes}.
By the theory of complex multiplication \cite[\S15]{ShimuraCM}, the
abelian variety $A_{v_0}$ is defined, up to isogeny, over the reflex
field $M^* = M^*(\Phi)$.
Since $M/\Q$ is abelian (Proposition~\ref{prop:M}), the reflex field
satisfies $M^*\subset M$; in particular $A_{v_0}$ is defined over
a subfield of $M$ of degree at most $12$ over $\Q$.
Furthermore, since $M/\Q$ is abelian, all endomorphisms of
$A_{v_0}$ are defined over $M^*$ itself
\cite[\S8.5]{ShimuraCM},
so the hypothesis of \cite[Prop.~7.14]{vKK23}
(that $\End_{\overline\Q}(A) = \End_{M^*}(A)$) is satisfied.

Applying \cite[Prop.~7.14]{vKK23} with $g = 6$:
\begin{equation}\label{eq:height-bound}
  h_F(A_{v_0})
  \;\leq\;
  (3g)^{(5g)^2}\,\mathrm{rad}(\disc(M/\Q))^{5g}
  \;=\;
  18^{900}\cdot\mathrm{rad}(\disc(M/\Q))^{30}.
\end{equation}
For $d\in\{3,7\}$, where $M = \Q(\zeta_{42})$ and
$\mathrm{rad}(\disc(\Q(\zeta_{42})/\Q)) = 2\cdot 3\cdot 7 = 42$
(since only the primes $2,3,7$ ramify in $\Q(\zeta_{42})$),
the bound becomes
\begin{equation}\label{eq:height-bound-explicit}
  h_F(A_{v_0}) \;\leq\; 18^{900}\cdot 42^{30}.
\end{equation}
For general squarefree $d$ with $\mathrm{rad}(d)\nmid 42$, the bound
involves $\mathrm{rad}(\disc(M/\Q))\mid 42\cdot\mathrm{rad}(d)$
and is larger but remains explicit.

\subsubsection*{Finiteness of isogeny classes}

A uniform bound on the Faltings height, combined with the fact that
all $A_{v_0}$ are defined over the fixed number field $M^*\subset M$,
gives finiteness of isogeny classes by the following standard argument.
By the preceding sub-argument (and \cite[\S15]{ShimuraCM}),% S5-C15
all $A_{v_0}$ are defined over $M^*\subset M$,
and the bound~\eqref{eq:height-bound} applies uniformly.
Since $M^*\subset M$ is a fixed number field and $h_F$ is uniformly
bounded, \cite[Satz~6]{Faltings83} yields finitely many isogeny
classes over $M$.

\begin{corollary}\label{cor:isogeny-program}
The abelian sixfolds $\{A_{v_0}: v_0\in V\cap\W_K\}$
produced by the Hecke program of \S\ref{sub:computation}
fall into only finitely many isogeny classes over~$\overline\Q$,
with Faltings heights bounded by $18^{900}\cdot 42^{30}$
for $d\in\{3,7\}$.
\end{corollary}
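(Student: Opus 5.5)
The proof is an assembly of results already established in \S\ref{sec:finite} and \S\ref{ss:finite-computation}. There are two assertions to prove: finiteness of isogeny classes, and the explicit height bound for $d\in\{3,7\}$.

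\emph{Step 1: reduce to a fixed CM field.} Every sixfold in question arises from a point $v_0\in V\cap\W_K$, whether or not it was located by the Hecke search. So Proposition~\ref{prop:cm} applies: $A_{v_0}$ is CM with $\End^0(A_{v_0})=M=KL$. Its CM type $\Phi$ is one of the at most $20$ Weil-compatible types of Proposition~\ref{prop:cmtypes}. If instead one only starts from a Hecke fixed point, Proposition~\ref{prop:hecke-cm} gives CM, and membership in $\W_K$ pins the CM field down to $M$.

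\emph{Step 2: a common field of definition.} Since $M/\Q$ is abelian (Proposition~\ref{prop:M}), the reflex field satisfies $M^*(\Phi)\subset M$. By CM theory \cite[\S15]{ShimuraCM}, each $A_{v_0}$ has a model over a subfield of $M$, with all endomorphisms defined over $M^*$. After base change we may regard every $A_{v_0}$ as defined over the single number field $M$. This verifies the hypothesis $\End_{\overline\Q}=\End_{M^*}$ needed for \cite[Prop.~7.14]{vKK23}.

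\emph{Step 3: bound the height.} Apply \eqref{eq:height-bound} with $g=6$. For $d\in\{3,7\}$ we have $M=\Q(\zeta_{42})$, and only $2,3,7$ ramify, so $\mathrm{rad}(\disc(M/\Q))=42$. This gives \eqref{eq:height-bound-explicit}, namely $h_F\le 18^{900}\cdot 42^{30}$. For general $d$ the same bound holds with $42$ replaced by $\mathrm{rad}(\disc(M/\Q))$, which divides $42\cdot\mathrm{rad}(d)$ by Proposition~\ref{prop:M}.

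\emph{Step 4: finiteness of isogeny classes.} Faltings' theorem \cite[Satz~6]{Faltings83} says there are only finitely many abelian varieties of fixed dimension over a fixed number field with bounded Faltings height, up to isomorphism and hence up to isogeny. This gives finitely many isogeny classes over $M$, and a fortiori over $\overline\Q$.

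There is a more robust alternative for the qualitative statement. By Theorem~\ref{thm:finite}, $V\cap\W_K$ is finite, so the set is trivially finite. Moreover, CM abelian varieties with a given CM pair $(M,\Phi)$ are all isogenous, and at most $20$ types occur. I would present this as the primary argument for finiteness and use Steps 2--4 only for the explicit bound.

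\emph{Main obstacle.} The delicate step is justifying the explicit constant. One must check that \cite[Prop.~7.14]{vKK23} really applies to CM sixfolds that are not of $\GL_2$-type (cf.\ Remark~\ref{rem:faltings-explicit}), and that the height is insensitive to passing from the reflex field to $M$. I would first use the stability of $h_F$ under base change for CM abelian varieties, which have everywhere potentially good reduction. If the cited constant cannot be confirmed, I would fall back on the Colmez-type averaged bounds of \cite{AGHMP}. These give a bound in terms of $\log\disc(M/\Q)$, and for fixed $M$ such a bound is far smaller than $18^{900}\cdot 42^{30}$, so the stated inequality would still follow.
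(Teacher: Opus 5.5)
\textbf{Finiteness of isogeny classes.} Your argument here is correct and takes a genuinely different route from the paper. The paper's proof only checks that each $A_{v_0}$ satisfies the hypotheses of Corollary~\ref{cor:isogeny}: CM by $M$, a model over $M^*\subset M$, and the bound \eqref{eq:height-bound}. It then applies Faltings' finiteness theorem. Your Steps~1--4 reproduce exactly that argument.

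Your ``robust alternative'' bypasses heights entirely, and in fact your second observation suffices on its own. By Proposition~\ref{prop:cm} and Remark~\ref{rem:cm-field}, every $A_{v_0}$ has CM by $M$ with one of the $20$ types $\Phi_{I^+,I^-}$. CM abelian varieties of a fixed type $(M,\Phi)$ are isogenous over $\overline\Q$, because any two lattices in $M$ are commensurable after scaling. So there are at most $20$ isogeny classes, and even at most $10$: composing the $M$-action with complex conjugation turns type $\Phi$ into $\overline\Phi$.

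This needs neither Theorem~\ref{thm:finite} (so no Andr\'e--Oort or Wolfart), nor Faltings, nor any statement about fields of definition. It also shows that, over $\overline\Q$, the second factor in the paper's count of isogeny classes after the corollary equals $1$.

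\textbf{The explicit constant.} Here your proposal, like the paper's proof, is not justified. The missing input is control of the order $R:=\End(A_{v_0})$. Since $\OO_L\subseteq R\subseteq\OO_M$, we have $R=\OO_L+\mathfrak f\OO_M$ for some conductor $\mathfrak f\subseteq\OO_L$, and nothing in your argument or the paper's bounds $\mathfrak f$ in terms of $g$ and $\disc(M/\Q)$.

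\emph{Step~2.} This is not what CM theory gives. Shimura--Taniyama only places the field of moduli of $(A_{v_0},\iota)$ in a class field of $M^*$ that depends on $R$. For CM elliptic curves with order of conductor $f$, for instance, this is the ring class field of conductor $f$. So you cannot put all the $A_{v_0}$ over the single field $M$, nor verify $\End_{\overline\Q}=\End_{M^*}$; the paper's proof makes the same assertion.

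\emph{Base change does not rescue this.} Faltings heights of CM abelian varieties are not bounded in terms of $g$ and the CM field alone once the order varies. Already for CM elliptic curves, passing to the order of conductor $f$ raises $h_F$ by roughly $\tfrac12\log f$.

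\emph{Your fallback.} The same objection defeats it. The averaged Colmez formula of \cite{AGHMP} concerns varieties with CM by the maximal order $\OO_M$. Relating $A_{v_0}$ to such a variety through the natural isogeny of degree $N(\mathfrak f)$ can cost up to $\tfrac12\log N(\mathfrak f)$ by Faltings' isogeny formula.

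So you prove the finiteness statement but not the bound $18^{900}\cdot 42^{30}$. For that you need either $\End(A_{v_0})=\OO_M$ or an explicit bound on $N(\mathfrak f)$. For points produced by a fixed $\mathfrak l$, the Hecke endomorphism gives $\OO_L[\alpha]\subseteq R$ and hence some bound on $\mathfrak f$, but that bound depends on $\ell$.
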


\begin{proof}
Every $A_{v_0}$ arising from a Hecke fixed point satisfies the
hypotheses of Corollary~\ref{cor:isogeny}: it is a CM abelian
sixfold with CM field $M = KL$ (Proposition~\ref{prop:hecke-cm}
and Proposition~\ref{prop:cm}) defined over a subfield of $M^*
\subset M$ of degree at most $12$ over $\Q$
(\cite[\S15]{ShimuraCM}).
The conclusion of Corollary~\ref{cor:isogeny} therefore applies
directly.
\end{proof}

The number of isogeny classes is at most $20$ (one per Weil-compatible
CM type) times the number of isogeny classes of CM sixfolds over $M$
with each fixed CM type and height $\leq 18^{900}\cdot 42^{30}$.
This second factor is finite but not further bounded here.

\subsubsection*{Finiteness of the Hecke search}

The combination of the height bound and the CM type constraint makes
the Hecke search of \S\ref{sub:computation} a finite computation in
the following precise sense.

For a fixed prime $\ell\equiv 1\pmod{42}$ (e.g., $\ell=43$),
the system of Step~5 consists of $(\ell+1)\cdot 2^6 = 44\cdot 64 = 2816$
algebraic equations in $z_0\in\HH$, each of degree at most $2$ in
the values $f_i(z_0)$. 
Each equation either has no solution in $\HH$, or it has finitely
many solutions (since $z_0$ must be a CM point by
Wolfart's theorem, Theorem~\ref{thm:wolfart}).
The set of CM points $z_0\in\HH$ with
$\Q(z_0)\in\{\Q(\sqrt{-3}),\,\Q(\sqrt{-7})\}$
(the only imaginary quadratic subfields of $M=\Q(\zeta_{42})$)
and $h_F(A_{\widetilde{f}_0(z_0)})\leq 18^{900}\cdot 42^{30}$ 
is finite by Faltings' theorem \cite[Satz~6]{Faltings83}
applied over the fixed number field $M$
(as in Corollary~\ref{cor:isogeny}).
Hence Step~5 searches through a finite -- if astronomically large --
list of candidates.

For $\ell=43$, the practical strategy is:
\begin{enumerate}[label=(\arabic*),itemsep=2pt]
  \item Enumerate CM points $z_0\in\HH$ lying in imaginary quadratic
    fields $\Q(\sqrt{-D})\subset M$ with small discriminant $|D|$.
  \item For each such $z_0$, compute the six values
    $f_1(z_0),\ldots,f_6(z_0)$ numerically to sufficient precision.
  \item Check whether any of the $2816$ fixed-point systems
    \eqref{eq:fp-quadratic} is satisfied.
  \item If the Weil signature condition (conditions (a)--(b) of
    Remark~\ref{rem:q3-gap}) holds, conclude $v_0\in V\cap\W_K$.
\end{enumerate}

A systematic implementation of steps~(1)--(3) for $\ell=43$ would
either produce a point of $V\cap\W_K$ or give strong numerical
evidence that no such point exists for this prime.

\open{\textbf{.} Carry out the Hecke search of Steps~1--5 for $\ell=43$,
enumerating CM points in $\HH$ of the form
$z_0 = (a+b\sqrt{-D})/c$ with $a,b,c,D\in\Z$,
$D>0$ squarefree, and $\Q(\sqrt{-D})\subset M$.
The computation is finite and explicit; it has not been
carried out here.}

%------------------------------------------------------------------
\subsection{The CM type constraint}\label{ss:cmtype-constraint}
%------------------------------------------------------------------

We now make explicit the arithmetic constraints that a Hecke
endomorphism $\alpha=\psi\circ\phi\in\End(A_{v_0})$ must satisfy% S5-C5 (line 2631)
in order for the corresponding fixed point $v_0$ to lie in
$V\cap\mathcal{W}_K$.
Proposition~\ref{prop:hecke-cm} establishes that every Hecke fixed
point $v_0\in V$ produces a CM abelian sixfold $A_{v_0}$ with
$\End^0(A_{v_0})\supsetneq L$, and
$\alpha=\psi\circ\phi$ satisfies% S5-C5 (line 2637)
\begin{equation}\label{eq:char-poly-alpha}
  \alpha^2 - c\,\alpha + \ell \;=\; 0
  \qquad\text{over }L,
  \qquad
  c := \tr_{M/L}(\alpha) \in \OO_L,
\end{equation}
with $N_{M/L}(\alpha) = \alpha^\dagger\alpha = \ell$.
The \emph{trace condition} \eqref{eq:trace-constraint} constrains
the trace $c=\tr_{M/L}(\alpha)\in\OO_L$ so that $\alpha$ generates
$M/L$ (equivalently, so that $c^2-4\ell\in-4d\cdot(\OO_L^\times)^2$).
The \emph{sign-vector condition} constrains the element $b\in\OO_L$
appearing in $\alpha=a+b\sqrt{-d}$: the six Galois conjugates
$(\sigma_i(b))_{i=1}^6$ must have exactly three positive entries,
forcing the Weil signature $(3,3)$.
The two examples of \S\ref{ss:examples-cmtype} show that rational
solutions $(a,b)\in\Z^2$ always fail the sign-vector condition and
that the sign-vector and norm conditions are jointly non-trivial,
requiring a genuine computation in $\OO_L=\Z[2\cos\pi/21]$.

\subsubsection*{The norm equation}

Since $\End^0(A_{v_0}) = M = L(\sqrt{-d})$ (Proposition~\ref{prop:cm}),
every element of $M$ can be written $\alpha = a + b\sqrt{-d}$ with
$a,b\in L$.
The Rosati involution restricts to complex conjugation on $M$
(as $M$ is a CM field and the polarization is positive definite),
so $\alpha^\dagger = a - b\sqrt{-d}$ and
\[
  N_{M/L}(\alpha) \;=\; a^2 + d\,b^2 \;=\; \ell.
\]
For $\alpha\notin L$ (guaranteed by Proposition~\ref{prop:hecke-cm})
we have $b\neq 0$, so the discriminant of \eqref{eq:char-poly-alpha} is
\[
  c^2 - 4\ell \;=\; (2a)^2 - 4(a^2+db^2) \;=\; -4db^2,
\]
which lies in $-d\cdot(L^\times)^2\subset L$.
Equivalently, $\alpha$ generates $M/L$ if and only if the norm form
$a^2+db^2$ represents $\ell$ with $b\neq 0$; for $\ell=43$ this is
guaranteed by the complete splitting $\ell\OO_M = \mathfrak{P}_1\cdots\mathfrak{P}_{12}$,
since any generator $\pi$ of a prime $\mathfrak{P}\mid\ell$ in $\OO_M$
satisfies $N_{M/L}(\pi) = N(\mathfrak{P}) = \ell$.

This observation produces a \emph{necessary condition} on the trace
$c\in\OO_L$ of any Hecke endomorphism generating $M/L$:
\begin{equation}\label{eq:trace-constraint}
  c^2 - 4\ell \;\in\; -4d\cdot(\OO_L)^2,
\end{equation}
i.e., $(4\ell - c^2)/4d$ must be a square in $L$.
For $d=3$ and $\ell=43$ this becomes $c^2\equiv 4\pmod{12}$ over $\Z$
(equivalently $c\equiv\pm 2\pmod{6}$ in $\Z$, lifted to a condition on
$c\in\OO_L$); for $d=7$ and $\ell=43$ one needs
$c^2\equiv 4\pmod{28}$.

\subsubsection*{CM type and Weil condition}

Condition~(a) of Remark~\ref{rem:q3-gap} --- that the CM field of% S5-C17
$A_{v_0}$ is specifically $M=L(\sqrt{-d})$ --- is \emph{not} automatic.
Proposition~\ref{prop:hecke-cm} produces $\alpha\notin L$ with minimal
polynomial $X^2-cX+\ell$ over $L$, so $L(\alpha)/L$ is some imaginary
quadratic extension; but its discriminant $c^2-4\ell$ could lie in
$-e\cdot(L^\times)^2$ for a squarefree $e\neq d$.
Condition~(a) is the requirement
$c^2-4\ell\in -d\cdot(L^\times)^2$
(equivalently $c^2-4\ell=-4db^2$ for some $b\in L$, i.e.\
$\alpha\in L(\sqrt{-d})=M$), and must be verified from the trace
$c\in\OO_L$ produced by the Hecke search --- this is precisely
the content of \eqref{eq:trace-constraint}.

Condition~(b) --- the Weil signature $(3,3)$ --- is a condition not on
$\alpha$ itself but on the \emph{CM type} $\Phi$ of $A_{v_0}$
(Proposition~\ref{prop:cmtypes}).
By Remark~\ref{rem:cm-field}, the CM type is the set
\[
  \Phi \;=\;
  \{\phi_i^+: i\in I^+\} \;\cup\; \{\phi_i^-: i\in I^-\},
  \qquad
  \phi_i^\pm(\sqrt{-d}) = \pm i\sqrt{d},
\]
and the Weil condition requires $|I^+|=|I^-|=3$.
A necessary condition for this is that the sign vector
$(\mathrm{sgn}\,\sigma_i(b))_{i=1}^6$ of the element $b\in L$ in
$\alpha = a + b\sqrt{-d}$ has exactly three positive entries.

The CM type is a datum of the period point $v_0\in V$: it records
which half of the $H^{1,0}$ eigenspace decomposition
$H^{1,0}(A_{v_0}) = \bigoplus_{i=1}^6 H^{1,0}_{\sigma_i}(A_{v_0})$
carries the $+i\sqrt{d}$ eigenvalue of the $K$-action.
The Hecke endomorphism $\alpha$ does not determine $\Phi$ --- it merely
confirms that $\End^0(A_{v_0}) = M$.

\subsubsection*{How $\alpha$ constrains $\Phi$}

Although $\alpha$ does not determine $\Phi$ alone, it does constrain
which CM types are consistent with a given trace $c=\tr_{M/L}(\alpha)$.
Writing $\alpha = a+b\sqrt{-d}$ with $a^2+db^2=\ell$, the action of
$\phi_i^+(\alpha) = \sigma_i(a) + i\sqrt{d}\,\sigma_i(b)$ on
$H^{1,0}_{\sigma_i}(A_{v_0})$ has real part $\sigma_i(a)$ and
imaginary part $\sigma_i(b)\sqrt{d}$.
The sign of $\sigma_i(b)$ records whether the $+i\sqrt{d}$-eigenline
of $\alpha$ in $H^{1,0}_{\sigma_i}$ aligns with the $+i\sqrt{d}$-eigenline
of the $K$-action $\eta(\sqrt{-d})$.
Specifically:
\begin{itemize}[leftmargin=*,itemsep=2pt]
  \item $\sigma_i(b) > 0$ is compatible with $i\in I^+$ (the $+i\sqrt{d}$
    eigenspace of $\eta(\sqrt{-d})$ on $H^{1,0}_{\sigma_i}$ is also the
    $+i\sqrt{d}\sigma_i(b)$ eigenspace of $\phi_i^+(\alpha)$);
  \item $\sigma_i(b) < 0$ is compatible with $i\in I^-$.
\end{itemize}
Thus the sign vector $(\mathrm{sgn}\,\sigma_1(b),\ldots,\mathrm{sgn}\,\sigma_6(b))
\in\{+,-\}^6$ is a necessary condition for a Hecke endomorphism $\alpha$
to be compatible with a given Weil-compatible CM type $\Phi$.
The Weil condition $|I^+|=3$ requires exactly three positive signs.

\begin{remark}\label{rem:norm-signs}
The norm equation $a^2+db^2=\ell$ with $a\in L$ and $b\in L$
determines $b$ only up to sign (and up to $L$-units);
the sign vector of $(b_1,\ldots,b_6) := (\sigma_1(b),\ldots,\sigma_6(b))$
is not determined by the norm equation alone.
For each element $\pi\in\OO_M$ of norm $\ell$ above a prime $\mathfrak{P}\mid\ell$,
the conjugate $\overline\pi$ (in $M/L$) has opposite signs for $b$.
Of the $2^6=64$ sign vectors, exactly $\binom{6}{3}=20$ satisfy
$|I^+|=3$; up to overall-sign equivalence ($\mathbf{s}\sim -\mathbf{s}$), these form
$10$ of the $32$ equivalence classes.
This is consistent with Proposition~\ref{prop:cmtypes}.
\end{remark}

\subsubsection*{Summary: what determines $v_0\in V\cap\W_K$}

Combining the Hecke construction with the CM type analysis, a
point $v_0\in V\cap\W_K$ is characterized by:
\begin{enumerate}[label=(\arabic*),itemsep=2pt]
  \item A prime $\ell\equiv 1\pmod{42}$ (so that $\ell$ splits completely
    in $M$) and a coset representative $\gamma_j$ from Step~2 of
    \S\ref{sub:computation};
  \item An element $\alpha=\delta\gamma_j\in\SL_2(\OO_L)\gamma_j$
    fixing $(w_1,\ldots,w_6)=(f_1(z_0),\ldots,f_6(z_0))$ componentwise;
  \item A trace $c=\tr_{M/L}(\alpha)\in\OO_L$ satisfying
    \eqref{eq:trace-constraint}, so that $\alpha$ generates $M/L$;
  \item The sign vector of $(\sigma_i(b))$ having exactly three
    positive entries, ensuring the Weil signature $(3,3)$
    for a Weil-compatible CM type $\Phi\in\Phi_{I^+,I^-}$.
\end{enumerate}
Conditions~(1)--(3) are verifiable in finite time from the Hecke
computation of \S\ref{sub:computation} and the bound of \S\ref{ss:finite-computation}.
Condition~(4) requires computing the CM type of $A_{\widetilde{f}_0(z_0)}$ 
from the period point $v_0=\widetilde{f}_0(z_0)\in V$, which is the
genuinely open step identified in Remark~\ref{rem:q3-gap}(b).

\open{\textbf{.} For any specific $\ell\equiv 1\pmod{42}$ and coset
representative $\gamma_j$ producing a CM fixed point $z_0$,
determine the sign vector $(\mathrm{sgn}\,\sigma_i(b))_{i=1}^6$
and hence the CM type $\Phi$ of $A_{\widetilde{f}_0(z_0)}$. 
In particular, determine whether the Weil-compatible types
(those with exactly three positive signs) can be realized at
any $T_\mathfrak{l}$-fixed point of $V$ for $\mathfrak{l}\mid 43$.}

%------------------------------------------------------------------
\subsection{Examples and open problems}\label{ss:examples-cmtype}
%------------------------------------------------------------------

\begin{example}[$d=3$, $\ell=43$, rational solution]\label{ex:d3-rational}
Take $d=3$, so $K=\Q(\sqrt{-3})$ and $M=L(\sqrt{-3})$.
The prime $\ell=43$ satisfies $43\equiv 1\pmod{42}$, so $43$ splits
completely in $M$.
The pair $(a,b)=(4,3)\in\Z^2$ solves the norm equation
$a^2+3b^2 = 16+27 = 43$,
giving an element $\alpha=4+3\sqrt{-3}\in\OO_M$ with
$N_{M/L}(\alpha)=43$ and trace $c=\tr_{M/L}(\alpha)=8$.
The discriminant identity of \S\ref{ss:cmtype-constraint} gives
$c^2-4\ell = 64-172 = -108 = -4\cdot 3\cdot 9$,
confirming $c^2-4\ell\in -4d\cdot(\OO_L^\times)^2$ with $b=3$. \medskip

However, since $b=3\in\Z$, all six embeddings satisfy
$\sigma_k(b)=3>0$, giving sign pattern $(+\,+\,+\,+\,+\,+)$ with
$|I^+|=6$.
By Remark~\ref{rem:norm-signs} this does not correspond to any
Weil-compatible CM type.
The rational solution satisfies the norm equation but fails the
Weil signature condition~(b) of Remark~\ref{rem:q3-gap}.
\end{example}

\begin{example}[$d=7$, $\ell=43$, rational solution]\label{ex:d7-rational}
Take $d=7$, so $K=\Q(\sqrt{-7})$ and $M=L(\sqrt{-7})$.
The pair $(a,b)=(6,1)\in\Z^2$ solves $a^2+7b^2=36+7=43$,
giving $\alpha=6+\sqrt{-7}\in\OO_M$ with $N_{M/L}(\alpha)=43$
and trace $c=12$.
The discriminant: $c^2-4\ell=144-172=-28=-4\cdot 7\cdot 1$. \medskip

Again $b=1\in\Z$ gives $\sigma_k(b)=1>0$ for all $k$, sign pattern
$(+\,+\,+\,+\,+\,+)$, $|I^+|=6$: not Weil-compatible.
Both Examples~\ref{ex:d3-rational} and~\ref{ex:d7-rational} illustrate
the general phenomenon: whenever a solution $(a,b)\in\Z^2$ to the norm
equation exists, $b\in\Z$ is totally positive in $L$, which always
gives $|I^+|=6$ and misses all $20$ Weil-compatible types.
A Weil-compatible solution requires $b\in\OO_L\setminus\Z$.
\end{example}

\begin{example}[Sign-pattern constraint for $d=3$]\label{ex:sign-pattern}
To illustrate the sign-pattern obstruction, consider $b=1+2t\in\OO_L$,
where $t=2\cos(\pi/21)$ is the standard generator of $\OO_L=\Z[t]$.
The six images are
\[
  \sigma_k(1+2t) \;=\; 1 + 4\cos(k\pi/21)
  \qquad k\in\{1,5,11,13,17,19\},
\]
which is positive if and only if $\cos(k\pi/21)>-\tfrac{1}{4}$,
i.e., $k\pi/21 < \arccos(-\tfrac{1}{4})\approx 1.823$~rad,
i.e., $k\leq 11$.
Thus $\sigma_1(b),\sigma_5(b),\sigma_{11}(b)>0$ and
$\sigma_{13}(b),\sigma_{17}(b),\sigma_{19}(b)<0$,
giving sign pattern $(+\,+\,+\,-\,-\,-)$ and $|I^+|=3$:
a Weil-compatible sign pattern with $I^+=\{1,5,11\}$.\medskip

However, $b=1+2t$ does \emph{not} satisfy the norm equation
$a^2+3b^2=43$ over $L$: the value $43-3\,\sigma_k(b)^2$ is
negative at $k=1$ and $k=5$ (approximately $-30.7$ and $-3.4$
respectively), so no $a\in L$ with real embeddings can solve the
equation with this choice of $b$.
The two requirements --- correct sign pattern \emph{and} norm
equation satisfied --- must be met simultaneously, and this requires
a genuine computation in $\OO_L$.
\end{example}

The programme of \S\S\ref{sub:computation}--\ref{ss:cmtype-constraint}
reduces the search for $v_0\in V\cap\W_K$ to three open computational
and theoretical problems.

\open{ \textbf{(O1). Execute the $\ell=43$ computation.}
For each squarefree $d\in\{3,7\}$ and each coset representative
$\gamma_j$ in Step~2 of \S\ref{sub:computation},
find all elements $\alpha\in\SL_2(\OO_L)\gamma_j$ satisfying
the fixed-point equations~\eqref{eq:fp-quadratic} with
$\alpha=a+b\sqrt{-d}\in\OO_M$, $a^2+db^2=43$,
and $\mathrm{sgn}(\sigma_1(b),\ldots,\sigma_6(b))$ having
exactly three positive entries.
Examples~\ref{ex:d3-rational}--\ref{ex:sign-pattern} show that
rational solutions fail and that the sign-pattern and norm-equation
constraints are jointly non-trivial.
}

\open{ \textbf{(O2). Verify the Weil signature condition.}
For any CM fixed point $z_0$ produced by (O1), compute the CM type
$\Phi$ of $A_{\widetilde{f}_0(z_0)}$ directly from the period point
$v_0=\widetilde{f}_0(z_0)\in V$.
The sign vector $(\mathrm{sgn}\,\sigma_i(b))$ is a \emph{necessary}
condition for Weil compatibility (Remark~\ref{rem:norm-signs}),
but the CM type is a property of the period and requires independent
verification.
This is condition~(b) of Remark~\ref{rem:q3-gap}, the step not
accessible from the Hecke construction alone.
}

\open{ \textbf{(O3). Prove algebraicity of $\widehat{HW}(A_{v_0})$.}
Even granting (O1)--(O2), two independent obstructions prevent a
direct appeal to known algebraicity results.
\emph{First}: Markman's algebraicity theorem 
requires the discriminant of the transcendental lattice to equal
$-1$; the discriminant of our construction is uncontrolled.
\emph{Second}: even if the discriminant were $-1$, the CM 
isolation of $A_{v_0}$ (Proposition~\ref{prop:cm}) prevents
the semiregularity argument underlying Markman's proof from
applying: the proof deforms the secant sheaf $\mathcal{E}$
over a positive-dimensional family of Weil-type abelian sixfolds,
but $A_{v_0}$ is an isolated point in that moduli space.
This is the obstruction of Observation~\ref{obs:markman-fails}(1),
independent of the discriminant.
These are genuinely distinct obstructions.
See also Observation~\ref{obs:markman-fails}.}

%------------------------------------------------------------------
\subsubsection*{Concluding remarks and outlook}
%------------------------------------------------------------------

This paper has assembled a program for attacking the Hodge conjecture
for the specific and explicitly described CM abelian sixfolds
$A_{v_0}$ arising over $V\cap\mathcal{W}_K$.
Let us collect the logical dependencies.

McMullen's non-Shimura geodesic curve $V\subset X_L$ (Theorem~\ref{thm:mcm9})
is the geometric input: a curve that is rigid, non-arithmetic, and
parametrized by a hypergeometric differential equation whose monodromy
group is $\Delta_0(14,21,42)$.
The Weil locus $\mathcal{W}_K\subset X_L$ (§\ref{sec:weil}) provides the
cohomological condition: the $20$-component hypersurface of codimension $3$
whose points carry exceptional Hodge classes in $H^{3,3}$.
The expected-dimension calculation $1+3-6=-2$ (§\ref{ss:dimensional-analysis})
makes any intersection $V\cap\mathcal{W}_K$ a rigid arithmetic
phenomenon: it cannot arise from general position, and its
non-emptiness is genuinely open (O1--O2 of §\ref{ss:examples-cmtype}).
The finiteness of $V\cap\mathcal{W}_K$ (Theorem~\ref{thm:finite}) converts
this openness into a finite, in principle decidable, computation:
the Hecke search of §\ref{sub:computation} over $44\times 64 = 2816$
algebraic systems for the prime $\ell=43$.

The Hodge conjecture itself (O3) remains open beyond this computation,
and the three independent obstructions isolated in
Observation~\ref{obs:markman-fails} and §\ref{ss:hc-avzero}
show that no currently available method---deformation theory,
spinor geometry, or CM theory alone---suffices to close it.
What the present work provides is a precise and computable
\emph{location}: if the Hodge conjecture is to be proved for a
genuinely new family of abelian sixfolds by some future method,
the fibers $A_{v_0}$ over $V\cap\mathcal{W}_K$ are among the most
explicitly described candidates in the literature.
They are defined over a fixed degree-$12$ number field,
their CM type is one of at most $20$ explicitly listed possibilities,
their Faltings height is bounded by $18^{900}\cdot 42^{30}$,
and the algebraic equations they must satisfy are written down in
§\ref{sub:computation}.

The three open problems O1--O3 thus
represent a division of labor: O1 is a finite explicit computation
requiring an implementation of the Hecke search;
O2 is an arithmetic question about CM types of abelian varieties
with real multiplication, accessible by the theory of complex
multiplication \cite{ShimuraCM};
and O3 is the genuinely new Hodge-theoretic question that motivates
the entire program.
Progress on any one of them would be significant.
Progress on all three would resolve an instance of the Hodge
conjecture for abelian sixfolds that is currently out of reach.

\end{document}